\renewcommand{\tikz}[2]{
\begin{tikzpicture}[scale=#1,baseline=(current bounding box.center),>=stealth]
#2
\end{tikzpicture}}
\newtheorem{thm}{Theorem}[section]
\newtheorem{lem}[thm]{Lemma}
\newtheorem{prop}[thm]{Proposition}
\theoremstyle{definition}
\theoremstyle{remark}
\newtheorem{rmk}[thm]{Remark}
\theoremstyle{remark}
\newtheorem{ex}[thm]{Example}
\numberwithin{equation}{section}
\def\leq{\leqslant}
\def\geq{\geqslant}
\def\i{{\tt i}}
\colorlet{llgray}{white!95!black}
\colorlet{lgray}{white!85!black}
\colorlet{lred}{white!85!red}
\colorlet{dred}{black!30!red}
\colorlet{lgreen}{white!80!green}
\colorlet{dgreen}{black!30!green}
\colorlet{dblue}{black!30!blue}
\definecolor{green}{rgb}{0.1,0.8,0.1}
\definecolor{yellow}{rgb}{1.0,0.85,0.25}
\DeclareMathOperator{\res}{res}
\newcommand{\N}[0]{\mathbb N}
\author{Jimmy He}
\address{Department of Mathematics, Ohio State University, Columbus, OH 43210}
\email{he.2673@osu.edu}
\author{Michael Wheeler}
\address{School of Mathematics and Statistics, University of Melbourne, Victoria 3010, Australia}
\email{wheelerm@unimelb.edu.au}
\date{\today}
\begin{document}

\title{Free boundary $q$-Whittaker and Hall--Littlewood processes}

\begin{abstract}
We study the free boundary $q$-Whittaker and Hall--Littlewood processes, two probability measures on sequences of partitions. We prove that a certain observable of the free boundary $q$-Whittaker process exhibits a $(q,t)$ symmetry after a random shift, generalizing a previous result of Imamura, Mucciconi, and Sasamoto, and an extension of that result due to the first author. Our proof is completely different, and as part of our proof, we find contour integral formulas for the free boundary $q$-Whittaker process. We also show a matching between certain observables in the free boundary Hall--Littlewood process and a quasi-open six vertex model, and explain how work of Finn and Vanicat gives an evaluation of a bounded sum over skew Hall--Littlewood functions as a rectangular Koornwinder polynomial.
\end{abstract}

\maketitle

\section{Introduction}
\subsection{Background}
Many recent advances in the study of integrable probabilistic systems have been driven by the study of probability measures on partitions defined in terms of symmetric functions. The first such results focused on Schur measures, which are related to zero-temperature growth models, see e.g. \cite{J00, BR01, SI04, BBCS18}. Schur measures have a fermionic (i.e. determinantal/Pfaffian) structure, which aids in asymptotics that are needed for probabilistic applications. More recently, connections were made between generalizations of Schur measures using Macdonald polynomials, and positive temperature models which have no obvious fermionic structure. These were then studied using hard but ad hoc methods, see e.g. \cite{BBC20, BBCW18, BC14, BCF14}. These ideas have also been used to study stationary measures for these systems, see e.g. \cite{CdGW15, CMW22, BCY24}.

Recently, Imamura, Mucciconi, and Sasamoto \cite{IMS21,IMS21b} discovered two new identities of symmetric functions relating full- and half-space $q$-Whittaker measures to periodic and free boundary Schur measures, leading to new asymptotic results for half-space models in the Kardar-Parisi-Zhang universality class \cite{DLM25,H23, IMS22}. The periodic Schur measure, introduced by Borodin \cite{B07}, and free boundary Schure measure, introduced by Betea, Bouttier, Nejjar, and Vuleti\'c \cite{BBNV18}, are both fermionic. Thus, these identities revealed that $q$-Whittaker measures, while not fermionic, are intimately related with fermionic measures. Perhaps even more surprising than the identities themselves is that the proof was via an intricate bijection. Let us also mention that there have many further developments in the study of half-space models, see e.g. \cite{DS25,BCD25,DY25,DZ24,G24}.

In a previous work \cite{HW23}, we found an an extension of one of the identities of \cite{IMS21}, which related the full space $q$-Whittaker measure to the periodic Schur measure, recasting the relationship as a symmetry of the periodic $q$-Whittaker measure, a natural generalization of both measures. The periodic $q$-Whittaker measure has two parameters, and reduces to either the full space $q$-Whittaker measure or the periodic Schur measure depending on which is specialized to $0$. These two parameters have a non-trivial symmetry which exchanges them without changing the measure, after a simple shift. This naturally begs the question of whether there is a free boundary $q$-Whittaker measure which shares a similar symmetry property. 

In this work, we give a definition of the free boundary $q$-Whittaker measure which naturally extends the existing half space $q$-Whittaker measure and free boundary Schur measure. We show that this measure shares the same symmetry property as the periodic $q$-Whittaker measure, generalizing the other identity of \cite{IMS21}. In particular, our proof is new even in this special case, and is much shorter than the original proof of \cite{IMS21} which used an intricate bijection. Of course there are many benefits of having a bijective proof, and a natural question is whether the bijection of \cite{IMS21} could be extended to prove our more general identities.

Our proof (as in \cite{HW23}) involves finding explicit contour integral formulas for the distribution of the largest part in the free boundary $q$-Whittaker measure, after a suitable shift. These formulas are themselves striking, exhibiting many symmetries in the integration variables. Moreover, establishing these formulas requires new ideas compared to \cite{HW23}, as is usually the case when considering models with open boundaries. In particular, while in \cite{HW23} we were able to directly establish a contour integral formula exhibiting the symmetry of parameters, in the free boundary setting the ideas of \cite{HW23} lead to a formula which has none of the desired symmetry properties. We must conduct an intricate analysis of the residues, and partially evaluate the integrals in order to obtain the final formula which does exhibit the symmetries we wish to establish.

A second result of this paper is to establish a relationship between free boundary measures and what we call quasi-open models (beyond the Schur case \cite{BBNV18}). We define a version of the stochastic six vertex on a slightly unusual geometry, which we call the quasi-open six vertex model, and give a relationship to the free boundary Hall--Littlewood process. Similar results have been obtained in the full space \cite{BBW16}, half-space \cite{BBCW18, H23}, and periodic \cite{HW23} settings, and connections between $q$-Whittaker measures and integrable probabilistic models are also known, see e.g. \cite{OSZ14, MP17, BBC20}. So far, probabilistic results on periodic and free boundary measures, and their related stochastic models, have been restricted to the Schur setting. Nevertheless, we believe this connection to integrable stochastic models is important, and hope that our results may be useful in initiating the study of these systems beyond the Schur case.

Finally, let us remark that this work fits naturally into a line of work on bounded or refined Littlewood identities. Littlewood identities evaluate sums of a single symmetric function (as opposed to a product in the Cauchy identity), usually in terms of a factored expression. There have been many cases where these identities have either been refined with additional parameters \cite{WZJ16, G23, BW16, BWZJ15}, or bounded by restricting the sum \cite{RW21, HKKO25}. The tail probabilities of the measures we study involve expressions exactly like this except that we sum over skew functions, and in this paper we also point out how a result of Finn and Vanicat \cite{FV17} gives an evaluation for a bounded sum over skew Hall--Littlewood functions in terms of a rectangular Koornwinder polynomial. 

\subsection{Main results}

\subsubsection{Rogers--Szeg\H{o} polynomials}

Fix an indeterminate $z$ and nonnegative integer $m \in \mathbb{Z}_{\geq 0}$.
The Rogers--Szeg\H{o} polynomials are defined as
\begin{align}
\label{RS}
h_m(z;q)
=
\sum_{k=0}^{m} {m \choose k}_q z^k,
\end{align}
where we have used the standard Gaussian $q$-binomial
\begin{align*}
{m \choose k}_q
=
\frac{(q;q)_m}{(q;q)_k (q;q)_{m-k}},
\qquad
0 \leq k \leq m.
\end{align*}

\subsubsection{The function $h_{\lambda}$}

Fix a partition $\lambda = 1^{m_1} 2^{m_2} 3^{m_3} \dots$ written in multiplicative form, and two indeterminates $a,b$. We define (see \cite{W06})
\begin{align}
\label{RS-product}
h_\lambda(a,b;q)
=
\prod_{i \geq 1}
a^{m_{2i-1}} h_{m_{2i-1}}(b/a;q)
h_{m_{2i}}(ab;q),
\end{align}
with $h_m(z;q)$ denoting the Rogers--Szeg\H{o} polynomial \eqref{RS}, and where the above product terminates due to the fact that $\lambda$ has finitely many nontrivial part multiplicities.

Note that because $m_k = \lambda_k'-\lambda_{k+1}'$ for all $k \geq 1$, we can also write the \eqref{RS-product} as
\begin{align*}
h_\lambda(a,b;q)
=
\prod_{i \geq 1}
a^{\lambda'_{2i-1}-\lambda'_{2i}} h_{\lambda'_{2i-1}-\lambda'_{2i}}(b/a;q)
h_{\lambda'_{2i}-\lambda'_{2i+1}}(ab;q),
\end{align*}
which leads to an elegant analogue of \eqref{RS-product} for conjugated partitions:
\begin{align*}
h_{\lambda'}(a,b;q)
=
\prod_{i \geq 1}
a^{\lambda_{2i-1}-\lambda_{2i}} h_{\lambda_{2i-1}-\lambda_{2i}}(b/a;q)
h_{\lambda_{2i}-\lambda_{2i+1}}(ab;q).
\end{align*}

\subsubsection{Results on $q$-Whittaker processes}

In what follows, let $P_{\lambda/\mu}(x;q,0)$ denote a skew $q$-Whittaker polynomial in an alphabet $x = (x_1,\dots,x_N)$ of arbitrary (possibly infinite) size; see \cite[Chapter VI, Section 7]{M79} for the definition of skew Macdonald polynomials $P_{\lambda/\mu}(x;q,t)$ and \cite{BW21} for explicit information regarding the $t=0$ specialization.

\begin{thm}
\label{thm: qt symmetry}
Fix an integer $n \geq 0$. The quantity
\begin{align}
\label{conjecture}
Z_{n}(x;q,t;a,b,c,d)
=
\sum_{\mu \subseteq \lambda:\lambda_1 \leq n}
\frac{h_{n-\lambda_1}(ab;q)h_{\lambda'}(a,b;q)}{(q;q)_{n-\lambda_1}\prod_{i \geq 1}(q;q)_{\lambda_i-\lambda_{i+1}}}
\cdot
P_{\lambda/\mu}(x;q,0)
\cdot
t^{|\mu|/2}
h_{\mu'}(c/\sqrt{t},d/\sqrt{t};q)
\end{align}
is separately symmetric in the parameters $(q,t)$ and $(a,b,c,d)$.
\end{thm}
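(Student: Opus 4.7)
The plan is to derive a multiple contour integral representation for $Z_n$ in which both symmetries are manifest, following the general philosophy of \cite{HW23} but with a new residue analysis dictated by the free boundary.

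Step 1 (separate the $\lambda$- and $\mu$-sums). The skew $q$-Whittaker polynomial $P_{\lambda/\mu}(x;q,0)$ couples $\lambda$ and $\mu$, while the two Rogers--Szeg\H{o} weights $h_{\lambda'}(a,b;q)$ and $h_{\mu'}(c/\sqrt t,d/\sqrt t;q)$ play the role of free-boundary analogues of a Cauchy kernel on each side. First I would sum out $\mu$ against its weight via a Littlewood-type identity of the form
\begin{equation*}
\sum_\mu t^{|\mu|/2}\, h_{\mu'}(c/\sqrt t,d/\sqrt t;q)\, P_{\lambda/\mu}(x;q,0) \;=\; \Phi_\lambda(x;q,t;c,d),
\end{equation*}
where the right-hand side is a product over $x$ times a function of $\lambda$. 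The presence of $t^{|\mu|/2}$ paired with $c/\sqrt t,d/\sqrt t$ is exactly what should make the resulting object visibly $(q,t)$-compatible on the $(c,d)$ side.

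Step 2 (contour integral for the $\lambda$-sum). For the remaining sum over $\lambda$ with $\lambda_1 \leq n$, weighted by
\begin{equation*}
\frac{h_{n-\lambda_1}(ab;q)\,h_{\lambda'}(a,b;q)}{(q;q)_{n-\lambda_1}\prod_{i\geq 1}(q;q)_{\lambda_i-\lambda_{i+1}}}\cdot \Phi_\lambda(x;q,t;c,d),
\end{equation*}
I would use the eigenfunction/contour integral expansion of the $q$-Whittaker polynomial, as in \cite{HW23}, to trade the discrete sum for a multiple contour integral in variables conjugate to the parts of $\lambda$. The cutoff $\lambda_1\leq n$ gets encoded by an auxiliary integration variable coming from $h_{n-\lambda_1}(ab;q)/(q;q)_{n-\lambda_1}$, and the multiplicative product structure of $h_{\lambda'}(a,b;q)$ produces factors in $a,b$ built into the integrand. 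After this step, the symmetry $a\leftrightarrow b$ (respectively $c\leftrightarrow d$) internal to each pair should be manifest, but nothing more.

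Step 3 (symmetrization by residue analysis). The output of Steps 1--2 will \emph{not} exhibit the exchange $(a,b)\leftrightarrow (c,d)$ nor the $(q,t)$ symmetry. To expose them, I would deform the contours carrying the variables associated with $(a,b)$ across poles produced by the $(c,d)$-variables (and vice versa), then collect and resum the residues. The expected outcome is a new contour integral whose integrand is manifestly invariant under the simultaneous exchange of the two parameter pairs and under $q\leftrightarrow t$, from which both claimed symmetries can be read off directly.

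The main obstacle is Step 3. In the periodic setting of \cite{HW23} a single integral representation already displayed the full symmetry, but the introduction warns that this is no longer the case with a free boundary: the extra $h_{n-\lambda_1}(ab;q)$ factor together with the independent Rogers--Szeg\H{o} factors over odd and even columns of $\lambda'$ produces additional boundary poles, and the naive integral is asymmetric. The heart of the argument will be to organise the collected residues, identify the correct partial evaluation of the integrals that turns each asymmetric piece into a symmetric one, and verify that the boundary residues (those involving the $ab$-pole from $h_{n-\lambda_1}(ab;q)$) conspire to restore the symmetry between $(a,b)$ and $(c,d)$ rather than obstruct it.
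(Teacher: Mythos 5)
Your overall philosophy (preliminary contour integral, then a residue analysis to expose hidden symmetry) matches the paper's, but two of your three steps have genuine problems, and you are missing the one idea that makes the theorem tractable. First, Step 1 as stated fails: the sum
\begin{equation*}
\sum_\mu t^{|\mu|/2}\, h_{\mu'}(c/\sqrt t,d/\sqrt t;q)\, P_{\lambda/\mu}(x;q,0)
\end{equation*}
is \emph{not} a product over $x$ times a function of $\lambda$. The skew Littlewood identity produces a prefactor times \emph{another} skew sum $\sum_\kappa h_\kappa(\cdot)\,Q_{\lambda/\kappa}(\cdot)$ with rescaled parameters; because of the $t^{|\mu|/2}$ shift one must iterate this recurrence infinitely many times, which only closes up in the unbounded (partition function) case. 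With the constraint $\lambda_1\leq n$ in play there is no clean $\Phi_\lambda$, so the rest of your plan has nothing to feed on. Second, your mechanism for the $(a,b,c,d)$ symmetry --- tracking boundary residues so that the $ab$-pole ``conspires'' with the $(c,d)$-poles --- is speculative and, even if it worked, would only give the exchange of the two pairs, not the full $S_4$ symmetry claimed in the theorem (e.g.\ $b\leftrightarrow c$).

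The missing idea is that the parameters $a,b,c,d$ are literally equivalent to alphabet variables: one proves
\begin{equation*}
Z_n(x;q,t;a,b,c,d)\;=\;Z_n\bigl((x,a,b,c,d);q,t;0,0,0,0\bigr),
\end{equation*}
using the identity $h_{\lambda'}(a,b;q)=\sum_{\mu:\,\mu'\text{ even}}P_{\lambda/\mu}(a,b;q,0)$ (Warnaar) on the $(c,d)$ side and a companion finite $q$-binomial computation showing
$\sum_{\lambda:\lambda_1\leq n,\ \lambda'\text{ even}}Q_{\lambda/\mu}(a,b;q,0)/(q;q)_{n-\lambda_1}
= h_{n-\mu_1}(ab;q)h_{\mu'}(a,b;q)/(q;q)_{n-\mu_1}$ on the $(a,b)$ side. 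Since $Z_n$ is symmetric in its alphabet, the full $(a,b,c,d)$ symmetry is immediate, and the $(q,t)$ symmetry reduces to $a=b=c=d=0$. Only then does the contour-integral strategy become workable: with all boundary parameters zero, the skew $q$-Whittaker polynomial is written as an $n$-fold integral (the cutoff $\lambda_1\leq n$ is enforced by using exactly $n$ integration variables and the vanishing of $P_{\lambda'}(y_1,\dots,y_n;0,q)$ for $\ell(\lambda')>n$, not by an auxiliary variable as you suggest), the sums over even $\lambda$ and $\mu$ are evaluated inside the integral by the classical Littlewood identity, and the residue analysis that restores the $(q,t)$ symmetry concerns the poles at $y_i=y_j^{-1}$ and $y_i=\pm 1$ --- producing a $BC$-symmetric integrand --- rather than any interplay between $(a,b)$- and $(c,d)$-contours.
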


\begin{rmk}
Theorem \ref{thm: qt symmetry} generalize Theorem 10.12 in \cite{IMS21}, as well as its extension Theorem 3.1 of \cite{H23}. These special cases are given by $Z_{n}(x;q,0,a,0,\sqrt{q}a,0)=Z_{n}(x;0,q,a,\sqrt{q}a,0,0)$ and $Z_{n}(x;q,0,a,0,\sqrt{q}b,0)=Z_{n}(x;0,q,a,\sqrt{q}b,0,0)$ respectively. Our proof is new even in these special cases, and is much shorter than the original proof in \cite{IMS21}.
\end{rmk}

\begin{rmk}
After suitably normalizing, this theorem has an interpretation in terms of random partitions. The right hand side can be interpreted as the probability that $\lambda_1+\chi\leq n$, where $\lambda$ is a random partition distributed according to the free boundary $q$-Whittaker measure, see Section \ref{sec: fb qw and hl} for a definition, and $\chi$ is an independent random variable distributed whose distribution function is given by $\mathbb{P}(\chi\leq n)=(q;q)_\infty(ab;q)_\infty\frac{h_n(ab;q)}{(q;q)_n}$. In fact, $\chi$ can be viewed as the sum of two $q$-geometric random variables with parameters $q$ and $ab$. This random shift appeared previously in \cite{H23}.
\end{rmk}

\begin{ex}
For $n=1$ and $N=1$, the quantity \eqref{conjecture} is small enough to compute explicitly. In that case, the sum is over all pairs of partitions $\lambda,\mu$ such that $0 \leq \mu_1 \leq \lambda_1 \leq 1$, and $0 \leq \ell(\lambda)-\ell(\mu) \leq 1$. The possible $(\lambda,\mu)$ pairs that satisfy these constraints are (a) $\lambda = \mu =\varnothing$; (b1) $\lambda = \mu = (1)$; (b2) $\lambda = (1), \mu = \varnothing$; (c1) $\lambda=\mu=(1^k)$ with $k \geq 2$; (c2) $\lambda=(1^k), \mu= (1^{k-1})$ with $k \geq 2$. Documenting the summand for all such cases, we have
\begin{align*}
&
{\rm (a)} : \dfrac{1+ab}{1-q}
\qquad\qquad
{\rm (b1)} + {\rm (b2)} : \dfrac{a+b}{1-q}(c+d+x_1)
\\
&
{\rm (c1)} + {\rm (c2)} : 
\Bigg[
\dfrac{1+ab}{1-q}(1+cd/t+x_1(c+d)/t)
+
\dfrac{a+b}{1-q}(c+d+x_1(cd/t+1))
\Bigg]
\sum_{i=1}^{\infty}t^i
\end{align*}
where the result for ${\rm (c1)} + {\rm (c2)}$ follows by keeping track, separately, of even and odd weights for $\lambda=(1^k)$. Adding all of these terms, one finds that
\begin{multline*}
Z_{1}(x_1;q,t;a,b,c,d)
=
\\
\frac{1+ab+ac+ad+bc+bd+cd+abcd+(a+b+c+d+abc+abd+acd+bcd)x_1}{(1-q)(1-t)},
\end{multline*}
which indeed exhibits the required symmetry in $(q,t)$ and $(a,b,c,d)$.
\end{ex}

\subsubsection{Results on Hall--Littlewood processes}
Let $P_{\lambda/\mu}(x;0,t)$ denote a skew Hall--Littlewood polynomial in the alphabet $x=(x_1,\dotsc, x_N)$ and define the \emph{free boundary Hall--Littlewood process} as the (possibly signed) probability measure on sequences of partitions $\vec\lambda=(\lambda^{(0)}\subseteq\dotsc\subseteq \lambda^{(N)})$ proportional to
\begin{equation*}
    \frac{q^{|\lambda^{(0)}|/2}h_{(\lambda^{(0)})'}(a,b;q)h_{(\lambda^{(N)})'}(c/\sqrt{u},d/\sqrt{u};q)}{\prod_j (q;q)_{\lambda^{(N)}_j-\lambda^{(N)}_{j+1}}}\prod_{i=1}^N P_{\lambda^{(i)}/\lambda^{(i-1)}}(x_i;0,t)
\end{equation*}

We now informally define the quasi-open six vertex model, see Section \ref{sec: quasi-open 6vm} for a formal definition. The usual six vertex model on a triangle is a probability distribution on arrows traveling the edges of half an $N\times N$ square lattice. The vertices on the diagonal boundary (of degree 2) play a special role, since arrows may enter/exit the model at these locations. Each edge supports at most one arrow, and the probability is a product of local weights depending on many parameters, including rapidities $x_i$ associated to rows/columns, an anisotropy parameter $t$, and boundary parameters $a,b$. The local weight at $(i,j)$ depends on $x_i$ and $x_j$, and either $t$ or $a,b$ depending on whether $i\neq j$ or $i=j$.

The quasi-open six vertex model is a version of the six vertex model on a strip, built out of an infinite collection of these triangles in an alternating fashion. The model now has two boundaries (the two sides of the strip), and so has four boundary parameters $a,b,c,d$. In addition, it has an additional parameter $q$, such that the $i$th triangle has rapidities which are modified by $q^{i/2}$. Since $q<1$, the geometric decay ensures that eventually, all arrows enter from the left boundary, travel horizontally, and exit the right boundary. The arrows enter from infinity, and eventually exit the last triangle. See Figure \ref{fig: quasi-open 6vm} for an example of (part of) a configuration of this model.

\begin{figure}
    \centering
			\begin{tikzpicture}[scale=0.6,baseline=(current bounding box.center),>=stealth]
			\draw[lgray, fill](-0.5,-0.5)--(2.5,2.5)--(-0.5,2.5)--cycle;
			\draw[lgray, fill](-0.5,3)--(2.5,6)--(2.5,3)--cycle;
			\draw[lgray, fill](3,3)--(6,6)--(3,6)--cycle;
			
			\draw[thick, dotted](-0.5,0)--(0,0)--(0,3.5)--(3.5,3.5)--(3.5,6);
			\draw[thick, dotted](-0.5,1)node[left=0.5cm]{$\dots$}--(1,1)--(1,4.5)--(4.5,4.5)--(4.5,6);
			\draw[thick, dotted](-0.5,2)--(2,2)--(2,5.5)--(5.5,5.5)--(5.5,6);
			
			\draw[ultra thick, ->](-0.5,0)--(0.25,0);
			
			\draw[ultra thick, ->](-0.5,1)--(1,1)--(1,3.5)--(2,3.5)--(2,5.5)--(3.5,5.5)--(3.5,6);
			
			\draw[ultra thick, ->](-0.5,2)--(0,2)--(0,3.75);
			
			\draw[ultra thick, ->](4.5,4.5)--(4.5,5.5)--(5.75,5.5);
			
			\end{tikzpicture}
    \caption{Example of part of a configuration of the quasi-open six vertex model. Note that arrows enter/exit from the diagonal boundaries.}
    \label{fig: quasi-open 6vm}
\end{figure}
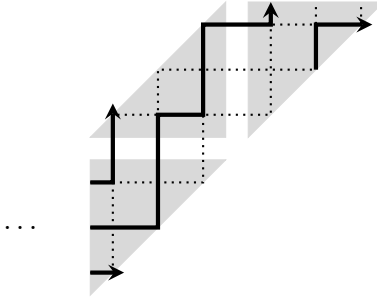

Out second main result relates the free boundary Hall--Littlewood process to the quasi-open six vertex model. We study an observable measuring the deviation of the system from the state where all horizontal edges are occupied, and all vertical ones unoccupied, and show that this, along with the locations that arrows exit the final triangle, are matched to the free boundary Hall--Littlewood process (Theorem \ref{thm: HL 6vm}). Our proof uses another integrable model of deformed bosons, and Yang--Baxter graphical arguments. Finally, we also state a result which is essentially found in \cite{FV17}, which equates a bounded sum over skew Hall--Littlewood polynomials similar to $Z_n(x;q,t;a,b,c,d)$ to a rectangular Koornwinder polynomial.

\subsection{Further directions}
Our proof, while direct, does not seem to shed light on the nature of these identities. It would be interesting to see if the bijective approach of \cite{IMS21} could be extended to prove the identities of this paper and in \cite{HW23}. It would also be interesting to understand if there is some algebraic structure behind the symmetries we have found.

A natural next step would be to try and study asymptotics for the quasi-open (and also quasi-periodic) six vertex model, or other related models. Unfortunately, our formulas as they stand do not seem amenable to asymptotic analysis. We believe it would be an interesting direction to obtain asymptotic results, whether through our formulas or other means, but we leave this as an open question.

Finally, it is unclear at this moment exactly how ubiquitous these types of symmetries and identities are. It would be interesting to find additional examples with other families of symmetric functions, and this may also shed light on exactly where these identities come from.

\subsection{Organization}
The paper is organized as follows. In Section \ref{sec: prelim}, we review some background on Macdonald polynomials and formally define the free boundary $q$-Whittaker and Hall--Littlewood processes. In Section \ref{sec: extra sym}, we establish some non-trivial symmetries of $Z_n$, one of which reduces Theorem \ref{thm: qt symmetry} to the case $a=b=c=d=0$. In Section \ref{sec: main pf}, we state and prove contour integral formulas for $Z_n$ which imply Theorem \ref{thm: qt symmetry}. Finally, in Section \ref{sec: HL process}, we formally define the quasi-open six vertex model, and show a distributional matching to the free boundary Hall--Littlewood process, and give a related identity showing a bounded sum over skew Hall--Littlewood polynomials equals a rectangular Koornwinder polynomial.

\section{Preliminaries}

\label{sec: prelim}
\subsection{Macdonald polynomials}
We refer the reader to \cite{M79} for further background on symmetric functions and Macdonald polynomials.

A \emph{partition} is a finite non-increasing sequence of non-negative integers $\lambda=(\lambda_1\geq \lambda_2\geq \dotsm\geq \lambda_k)$, and we call $k$ the \emph{length}, denoted $l(\lambda)$. We let $|\lambda|=\sum_{i\geq 1} \lambda_i$ denote its \emph{size}. It is frequently useful to view a partition as its \emph{Young diagram}, where we place $\lambda_i$ boxes in the $i$th row from top to bottom, so that each row is aligned on the left. We define the \emph{conjugate partition} $\lambda'$ as the partition obtained from $\lambda$ by reflecting its Young diagram, switching the rows and columns. Given a box $s\in\lambda$, we let $a(s)$ and $l(s)$ be the \emph{arm length} and \emph{leg length}, defined as the number of boxes to the right and below respectively. We will let $m_i(\lambda)$ denote the number of occurrences of $i$ in $\lambda$.

Let $x=(x_1,x_2,\dotsc)$ be a formal alphabet, and let $\Lambda$ denote the ring of symmetric functions. We let $P_\lambda(x;q,t)$ denote the \emph{Macdonald polynomials}, defined as the unique symmetric functions orthogonal with respect to the inner product defined by
\begin{equation*}
    \langle p_\lambda,p_\mu\rangle=\delta_{\lambda,\mu}z_\lambda\prod_i\frac{1-q^{\lambda_i}}{1-t^{\lambda_i}},
\end{equation*}
where $p_\lambda=\prod_{i\geq 1} p_{\lambda_i}$, $p_k=\sum_{i\geq 1}x_i^k$, and $z_\lambda=\prod _i m_i(\lambda)!i^{m_i(\lambda)}$, and whose change of basis to the monomial symmetric functions is upper triangular with respect to the dominance ordering on partitions (see Chapter VI, Section 4 of \cite{M79}). We let $Q_{\lambda}(x;q,t)$ denote the dual basis with respect to this inner product, and define $b_\lambda(q,t)$ by $Q_\lambda(x;q,t)=b_\lambda(q,t) P_\lambda(x;q,t)$. We have
\begin{equation*}
b_\lambda(q,t)=\prod_{s\in\lambda}\frac{1-q^{a(s)}t^{l(s)+1}}{1-q^{a(s)+1}t^{l(s)}},
\end{equation*}
and note that $b_\lambda(q,t)=b_{\lambda'}(t,q)$. Macdonald polynomials satisfy a \emph{Cauchy identity}, which states that
\begin{equation*}
    \sum_{\lambda}P_\lambda(x;q,t)Q_{\lambda}(y;q,t)=\prod_{i,j}\frac{(tx_iy_j;q)_\infty}{(x_iy_j;q)_\infty}=:\Pi(x,y;q,t).
\end{equation*}

For a skew Young diagram $\lambda/\mu$, the \emph{skew Macdonald polynomials} $P_{\lambda/\mu}(x;q,t)$ are then defined by
\begin{equation*}
    \langle P_{\lambda/\mu},Q_\nu\rangle=\langle P_\lambda, Q_\mu Q_\nu\rangle
\end{equation*}
for all $Q_\nu$. The $Q_{\lambda/\mu}$ are similarly defined, with the roles of $P$ and $Q$ swapped. These are homogeneous polynomials in the alphabet $x$ of degree $|\lambda|-|\mu|$. They satisfy a \emph{branching rule}, meaning that if we specialize into two sets of variables $(a,b)$, then
\begin{equation*}
    P_{\lambda/\mu}(a,b;q,t)=\sum_{\nu}P_{\lambda/\nu}(a;q,t)P_{\nu/\mu}(b;q,t),
\end{equation*}
and similarly for the dual family $Q_{\lambda/\mu}$.

Recall that $\Lambda$ is isomorphic to a polynomial ring in variables $p_k$, so any homomorphism from $\Lambda$ is uniquely defined by a choice of where to send $p_k$ for each $k$. There is an important involution $\omega_{q,t}$ on the ring of symmetric functions defined by
\begin{equation*}
    \omega_{q,t}(p_\lambda)=(-1)^{|\lambda|-l(\lambda)}\prod_i\frac{1-q^{\lambda_i}}{1-t^{\lambda_i}}p_\lambda.
\end{equation*}
This involution satisfies $\omega_{q,t}(P_{\lambda/\mu}(x;q,t))=Q_{\lambda'/\mu'}(x;t,q)$, and $\omega_{q,t}(Q_{\lambda/\mu}(x;q,t))=P_{\lambda'/\mu'}(x;t,q)$, simultaneously swapping the roles of $P$ and $Q$, along with $q$ and $t$, while also conjugating all partitions.

We will be interested in two special cases of the Macdonald polynomials. When $q=0$, the Macdonald polynomials are called the \emph{Hall--Littlewood polynomials}, and when $t=0$, they are called the \emph{$q$-Whittaker polynomials}. The common specialization $q=t=0$ (actually $q=t$ suffices) are called the \emph{Schur polynomials}, which we will denote $s_{\lambda}(x)$.

Finally, a key fact which will be used in this paper is the following Littlewood identity due to Warnaar \cite[Theorem 1.1]{W06}.
\begin{thm}
\label{thm: littlewood}
Fix an integer $n \geq 1$ and let $y=(y_1,\dots,y_n)$ be an alphabet. The Hall--Littlewood polynomials obey the Littlewood identity
\begin{align}
\label{littlewood-rs}
\sum_{\nu}
h_{\nu}(a,b;q)
P_{\nu}(y;0,q)
=
\prod_{i=1}^{n}
\frac{(1+ay_i)(1+by_i)}{1-y_i^2}
\prod_{1 \leq i < j \leq n}
\frac{1-ty_i y_j}{1-y_i y_j},
\end{align}
which may be interpreted either formally or as a convergent series if one assumes that $|y_i y_j|<1$ for all $1 \leq i<j \leq n$.
\end{thm}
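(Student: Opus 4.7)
The plan is to prove the identity \eqref{littlewood-rs} by induction on $n$, the number of variables $y_i$.

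For the base case $n=1$, the sum on the left-hand side collapses to $\sum_{k \geq 0} h_{(k)}(a,b;q) \, y_1^k$ since $P_\nu(y_1; 0, q)$ vanishes unless $\nu = (k)$ is a single-row partition, in which case $P_{(k)}(y_1; 0, q) = y_1^k$. A direct computation from \eqref{RS-product} shows that $h_{(k)}(a,b;q) = a+b$ for odd $k \geq 1$ and $h_{(k)}(a,b;q) = 1+ab$ for even $k \geq 2$, with $h_\varnothing = 1$. Splitting by parity of $k$ and summing the two geometric series yields $\frac{1 + (a+b)y_1 + ab y_1^2}{1-y_1^2}$, which factorizes as $\frac{(1+ay_1)(1+by_1)}{1-y_1^2}$ as required.

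For the inductive step, I would invoke the Hall--Littlewood branching rule
\begin{equation*}
P_\nu(y_1, \dotsc, y_n; 0, q) = \sum_{\mu \subseteq \nu} \psi_{\nu/\mu}(q) \, y_n^{|\nu/\mu|} \, P_\mu(y_1, \dotsc, y_{n-1}; 0, q),
\end{equation*}
where the sum is restricted to those $\mu$ for which $\nu/\mu$ is a horizontal strip. Substituting into the LHS and exchanging the order of summation rewrites it as $\sum_\mu F_\mu(y_n) \, P_\mu(y_1, \dotsc, y_{n-1}; 0, q)$ for certain coefficients $F_\mu(y_n)$. On the other hand, using the inductive hypothesis on the RHS yields
$\big[\sum_\mu h_\mu(a,b;q) P_\mu(y_1, \dotsc, y_{n-1}; 0, q)\big] \cdot \frac{(1+ay_n)(1+by_n)}{1-y_n^2} \prod_{i<n} \frac{1-qy_iy_n}{1-y_iy_n}$. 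The additional mixed-variable product would be absorbed by the generating-function identity $\prod_{i<n} \frac{1-qy_iy_n}{1-y_iy_n} = \sum_{r \geq 0} q_r(y_1, \dotsc, y_{n-1}; q)\, y_n^r$ together with the Pieri rule for multiplying $P_\mu$ by the one-row Hall--Littlewood element $q_r$; matching coefficients of $P_\mu$ on both sides reduces the identity to a combinatorial assertion about the $F_\mu(y_n)$.

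The main obstacle I foresee is the explicit combinatorial evaluation of these $F_\mu(y_n)$ sums. Because $h_\nu(a,b;q)$ depends nontrivially on the multiplicities $m_i(\nu)$ with alternating roles for odd and even indices, and because horizontal-strip additions simultaneously affect consecutive $m_i$, the bookkeeping is delicate. The key technical input should be the classical Rogers--Szeg\H{o} generating function
\begin{equation*}
\sum_{m \geq 0} h_m(z;q) \frac{w^m}{(q;q)_m} = \frac{1}{(w;q)_\infty (zw;q)_\infty},
\end{equation*}
which allows the horizontal-strip sums over Rogers--Szeg\H{o} factors to be resolved into a product, together with standard $q$-binomial identities to handle the interleaving of even/odd index contributions and to match the Pieri coefficients arising from the $q_r$ expansion.
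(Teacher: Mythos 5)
The paper does not actually prove this statement: it is quoted verbatim from Warnaar \cite[Theorem 1.1]{W06}, so there is no internal proof to compare against. Judged on its own terms, your proposal is a plausible strategy but not a proof. The base case is correct (indeed $h_{(k)}(a,b;q)$ equals $a+b$ for odd $k$ and $1+ab$ for even $k\geq 2$, and the geometric series give $\frac{(1+ay_1)(1+by_1)}{1-y_1^2}$), and the reduction via branching, the generating function $\sum_r q_r(y;q)z^r=\prod_i\frac{1-qy_iz}{1-y_iz}$, and the Pieri rule is sound: matching coefficients of $P_\lambda(y_1,\dots,y_{n-1})$ reduces the theorem to the single-variable identity
\begin{equation*}
\sum_{\nu\supseteq\lambda}h_\nu(a,b;q)\,\psi_{\nu/\lambda}(q)\,z^{|\nu/\lambda|}
=\frac{(1+az)(1+bz)}{1-z^2}\sum_{\mu\subseteq\lambda}h_\mu(a,b;q)\,\varphi_{\lambda/\mu}(q)\,z^{|\lambda/\mu|},
\end{equation*}
with both sums over horizontal strips.

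That identity is where the entire content of the theorem lives, and you explicitly leave it unverified ("the main obstacle I foresee\dots"). This is a genuine gap, not a routine detail: the left side is an infinite sum (producing the $1/(1-z^2)$), the factors $\psi,\varphi$ are products of $(1-q^{m_i})$ over certain index sets, and the weight $h_\nu$ couples adjacent multiplicities with different Rogers--Szeg\H{o} factors ($h_{m_{2i-1}}(b/a)$ versus $h_{m_{2i}}(ab)$) whose indices shift by opposite amounts when a horizontal strip is added across the interval $[\lambda_{i+1},\lambda_i]$. Naming the Rogers--Szeg\H{o} generating function and ``standard $q$-binomial identities'' as the intended tools does not establish that the interval-by-interval contributions actually telescope to the claimed product; an argument of exactly this delicacy is carried out in Lemma \ref{lem:ab equiv} of this paper for the conjugate-dual analogue, and it is not a formality. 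Until you exhibit the evaluation of $F_\lambda(y_n)$ and check it against the right-hand side, the inductive step is an unproven conjecture and the argument is incomplete.
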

The special case when $a=b=c=d=0$ is given by
\begin{align}
\label{littlewood}
\sum_{\nu \text{ even}}
P_{\nu}(y;0,t)
=
\prod_{i=1}^{n}
\frac{1}{1-y_i^2}
\prod_{1 \leq i < j \leq n}
\frac{1-ty_i y_j}{1-y_i y_j}.
\end{align}

\subsection{Free boundary $q$-Whittaker and Hall--Littlewood processes}
\label{sec: fb qw and hl}
We now introduce and define the free boundary $q$-Whittaker and Hall--Littlewood processes. These are a natural common generalization of the half space measures $q$-Whittaker and Hall--Littlewood processes appearing in \cite{BBC20, BBCW18, IMS22, H23}, and the free boundary Schur process introduced in \cite{BBNV18}. Note that by duality, it suffices to just define one. One could attempt to define a Macdonald analogue, but the boundary weights given below would need to be changed, see Remark \ref{rmk: fb mac}.

Fix parameters $q,t\in [0,1)$ and $x=(x_1,\dotsc, x_N)$ with $x_i\in [0,1)$. We also fix parameters $a,b,c,d$, which we may allow to be negative, see Remark \ref{rmk: signed meas}. We define the \emph{free boundary Hall--Littlewood process} as a probability measure (possibly signed) on a sequence of partitions $\vec\lambda=(\lambda^{(0)}\subseteq \lambda^{(1)}\subseteq \dotsc \subseteq \lambda^{(N)})$ by
\begin{equation*}
    \mathbb{FBHL}_{q,t,a,b,c,d}^x(\vec\lambda)=\frac{1}{\Phi_{HL}}\frac{q^{|\lambda^{(0)}|/2}h_{\lambda^{(0)}}(a,b;t)h_{\lambda^{(N)}}(c/\sqrt{q},d/\sqrt{q};t)}{\prod_j (t;t)_{m_j(\lambda^{(0)})}}\prod_{i=1}^N P_{\lambda^{(i)}/\lambda^{(i-1)}}(x_i;0,t).
\end{equation*}
By applying the Macdonald involution, we can define the \emph{free boundary $q$-Whittaker process} by
\begin{equation*}
    \mathbb{FBW}_{u,q,a,b,c,d}^x(\vec\lambda)=\frac{1}{\Phi_{W}}\frac{u^{|\lambda^{(0)}|/2}h_{(\lambda^{(0)})'}(a,b;q)h_{(\lambda^{(N)})'}(c/\sqrt{u},d/\sqrt{u};q)}{\prod_j (q;q)_{\lambda^{(N)}_j-\lambda^{(N)}_{j+1}}}\prod_{i=1}^N P_{\lambda^{(i)}/\lambda^{(i-1)}}(x_i;q,0).
\end{equation*}
Here $\Phi_{HL}$ and $\Phi_{W}$ are constants (depending on all parameters) such that these measures have total mass $1$, see Theorem \ref{thm:pf}. We will call the $\lambda^{(N)}$ marginal of these measures the \emph{free boundary $q$-Whittaker measure} and \emph{free boundary Hall--Littlewood measure} respectively, and write the corresponding measures with an $M$ at the end (so for example, $\mathbb{FBHLM}_{q,t,a,b,c,d}^x$ denotes the free boundary Hall--Littlewood measure).

\begin{rmk}
\label{rmk: fb mac}
Since half space measures have been defined with Macdonald polynomials, see \cite{BBC20}, it is natural to ask whether these free boundary measures make sense with Macdonald polynomials. In fact, naively putting Macdonald polynomials into the above expressions will not lead to anything nice, since the $h_\lambda$ are only related to the two specializations of Macdonald polynomials, see \cite{W06} for a more detailed discussion. But by setting $a=b=c=d=0$, there should be no issue in defining a Macdonald analogue and for example, the partition function could be computed using a skew Littlewood identity for Macdonald polynomials, see e.g. Equation (1.6) in \cite{R12}. Perhaps more interesting, one could possibly keep two parameters, as Littlewood identities with a single boundary parameter are known. Since we do not study these more general measures, we will not formally define them here.
\end{rmk}

\begin{rmk}
\label{rmk: signed meas}
Given that the parameters $a,b,c,d$ may ultimately be absorbed into the variables $x$, see Proposition \ref{prop: abcd equiv}, it is natural to ask why we even include them. These parameters turn out to be very natural, especially in the Hall--Littlewood case, due to connections to integrable probabilistic models like the six vertex model (see Theorem \ref{thm: HL 6vm}), and also because of a relation to Koornwinder polynomials, see Theorem \ref{thm: koornwinder}, originally proved in \cite{FV17}. Since these parameters seem to play a special role, and their equivalence with the variables $x$ is not obvious, we have decided to retain them.

However, the connections with probabilistic systems seems to require taking two of the parameters in $a,b,c,d$ to be negative. In some cases, this means that we no longer have a true probability measure, but a signed probability measure. We will continue to use probabilistic notation, but these statements should be interpreted appropriately. For example, we will continue to write the sum of random variables, which must be interpreted as convolution of signed measures.
\end{rmk}

We end this section by computing the partition function $\Phi_W$ for the free boundary $q$-Whittaker process. The Hall--Littlewood partition function $\Phi_{HL}$ may be computed by applying the Macdonald involution. Theorem \ref{thm: HL 6vm} also gives a separate argument for this, see Remark \ref{rmk: HL partition fn}.

\begin{thm}
\label{thm:pf}
Fix an alphabet $x=(x_1,\dots,x_N)$. The following summation identity holds:
\begin{multline}
\label{partition-function}
\sum_{\mu \subseteq \lambda}
\frac{h_{\lambda'}(a,b;q)}
{\prod_{i \geq 1}(q;q)_{\lambda_i-\lambda_{i+1}}}
\cdot
P_{\lambda/\mu}(x;q,0)
\cdot
t^{|\mu|/2}
h_{\mu'}(c/\sqrt{t},d/\sqrt{t};q)
=
\\
\prod_{i=1}^{N}
\frac{1}{(ax_i;q,t)_{\infty}(bx_i;q,t)_{\infty}(cx_i;q,t)_{\infty}(dx_i;q,t)_{\infty}}
\prod_{1 \leq i<j \leq N}
\frac{1}{(x_i x_j;q,t)_{\infty}}
\\
\times
\frac{(ab;q)_{\infty}}{(t;t)_{\infty}(qt;q,t)_{\infty}(ab;q,t)_{\infty}(ac;q,t)_{\infty}(ad;q,t)_{\infty}(bc;q,t)_{\infty}(bd;q,t)_{\infty}(cd;q,t)_{\infty}}.
\end{multline}
\end{thm}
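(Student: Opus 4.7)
My plan is to reduce the identity to a partition function for the free boundary Hall--Littlewood process via the Macdonald involution, and then evaluate it by combining Warnaar's Littlewood identity (Theorem \ref{thm: littlewood}) with the Hall--Littlewood Cauchy identity.

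Apply the Macdonald involution $\omega_{q,0}$ in the $x$-variable, which sends $P_{\lambda/\mu}(x;q,0)$ to $Q_{\lambda'/\mu'}(x;0,q)$. After relabelling conjugate partitions and using the general identity $Q_{\lambda/\mu} = (b_\lambda/b_\mu) P_{\lambda/\mu}$ (which follows from matching the $P$- and $Q$-branching rules via $Q_\nu = b_\nu P_\nu$) together with $b_\lambda(0,q) = \prod_i(q;q)_{m_i(\lambda)}$, the LHS becomes
\begin{equation*}
Z = \sum_{\mu\subseteq\lambda}\frac{h_\lambda(a,b;q)\,P_{\lambda/\mu}(x;0,q)\,t^{|\mu|/2}h_\mu(c/\sqrt t,d/\sqrt t;q)}{b_\mu(0,q)}.
\end{equation*}
Introduce a linear functional $L$ on $\Lambda$ defined on the $Q_\mu$ basis by $L(Q_\mu(\cdot;0,q)) = t^{|\mu|/2}h_\mu(c/\sqrt t,d/\sqrt t;q)$; equivalently, $L_y(P_\mu(y;0,q)) = t^{|\mu|/2}h_\mu(c/\sqrt t,d/\sqrt t;q)/b_\mu(0,q)$. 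By the $P$-branching rule $P_\lambda(x\cup y;0,q) = \sum_\mu P_\mu(y;0,q)P_{\lambda/\mu}(x;0,q)$,
\begin{equation*}
Z = L_y\Bigl(\sum_\lambda h_\lambda(a,b;q)P_\lambda(x\cup y;0,q)\Bigr) = L_y\bigl(W(x\cup y;a,b;q)\bigr),
\end{equation*}
where the last step uses Theorem \ref{thm: littlewood} and $W$ denotes the Warnaar product. The manifest factorisation $W(x\cup y) = W(x;a,b;q)\,W(y;a,b;q)\,\Pi(x,y;0,q)$ then yields $Z = W(x;a,b;q)\cdot L_y\bigl(W(y;a,b;q)\,\Pi(x,y;0,q)\bigr)$.

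To evaluate the remaining $L_y$, observe that $L_y(\Pi(x,y;0,q)) = \sum_\mu P_\mu(\sqrt t x;0,q)\,h_\mu(c/\sqrt t,d/\sqrt t;q) = W(\sqrt t x;c/\sqrt t,d/\sqrt t;q)$, by the homogeneity of $P_\mu$ and a second application of Warnaar. The challenge is that $L$ is not multiplicative, so one cannot simply factor $L_y\bigl(W(y)\,\Pi(x,y)\bigr)$ into $L_y(W(y))\cdot L_y(\Pi(x,y))$. Instead, one must expand $W(y;a,b;q) = \sum_\nu h_\nu(a,b;q)P_\nu(y;0,q)$ and $\Pi(x,y;0,q) = \sum_\sigma P_\sigma(x;0,q)Q_\sigma(y;0,q)$, multiply in $y$ via the Hall--Littlewood Pieri structure constants for $P_\nu\cdot Q_\sigma$, and apply $L_y$. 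Since $|\nu|$ and $|\sigma|$ can be arbitrarily large and $L_y$ introduces the weight $t^{|\cdot|/2}$ into each term, the resulting infinite $t$-series produces the full $(q,t)$-Pochhammer factors $(\alpha x_i;q,t)_\infty^{-1}$ for $\alpha\in\{a,b,c,d\}$ and $(x_ix_j;q,t)_\infty^{-1}$ appearing on the RHS.

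The essential non-trivial ingredient, which I expect to be the main obstacle, is the scalar boundary--boundary factor that emerges from the $N=0$ case (i.e.\ $x=\varnothing$). It amounts to the closed-form evaluation of the double Hall--Littlewood Littlewood sum
\begin{equation*}
\sum_\nu \frac{h_\nu(a,b;q)\,t^{|\nu|/2}h_\nu(c/\sqrt t,d/\sqrt t;q)}{b_\nu(0,q)},
\end{equation*}
which does not reduce to a single application of Warnaar. This identity can be established via a multivariate $q$-beta type identity in the Rains/Warnaar family, or, as noted in Remark \ref{rmk: HL partition fn}, via the quasi-open six-vertex model correspondence of Theorem \ref{thm: HL 6vm}. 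Once this base case is in hand, the full identity follows by the Cauchy/Warnaar iteration described above.
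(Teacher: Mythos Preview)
Your reduction via $\omega_{q,0}$ and your identification of the sum with $L_y\bigl(W(x\cup y)\bigr)$ are both correct and match the paper's setup; what the paper calls $\Lambda(z;q,t;a,b,c,d)$ is exactly your $Z$. However, your proposal has a genuine gap at the point where you try to evaluate $L_y\bigl(W(y)\,\Pi(x,y)\bigr)$. You correctly observe that $L$ is not multiplicative and then propose to ``expand via Pieri structure constants and apply $L_y$'', but you never carry this out, and the resulting triple sum over $\nu,\sigma,\rho$ with Hall--Littlewood structure constants has no evident closed form. The sentence ``the resulting infinite $t$-series produces the full $(q,t)$-Pochhammer factors'' is an assertion, not an argument.

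The paper closes this gap with a different idea: it first derives from Warnaar's identity a \emph{skew} Littlewood identity
\[
\sum_{\lambda} h_{\lambda}(a,b;q)\,P_{\lambda/\mu}(z;0,q)
= W(z;a,b;q)\,\sum_{\kappa} h_{\kappa}(a,b;q)\,Q_{\mu/\kappa}(z;0,q),
\]
obtained by splitting $y=(z,w)$, expanding the $w$-dependent factors again by Cauchy and Warnaar, and extracting the coefficient of $P_\mu(w)$. Multiplying by $t^{|\mu|/2}h_\mu(c/\sqrt t,d/\sqrt t;q)/b_\mu(0,q)$ and summing over $\mu$ turns this into a functional recurrence
\[
\Lambda(z;q,t;a,b,c,d) = W(z;a,b;q)\cdot \Lambda(\sqrt{t}\,z;q,t;c/\sqrt t,d/\sqrt t,a\sqrt t,b\sqrt t),
\]
which iterates to produce all the $(q,t)$-Pochhammer $x$-dependent factors explicitly. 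This recurrence is the missing key step your Pieri expansion cannot supply. For the scalar factor $\Lambda(0)$ (your $N=0$ case), the paper does not need anything as heavy as a Rains--Warnaar $q$-beta integral or the six-vertex correspondence: because $h_\lambda(a,b;q)$ factors over part multiplicities, the sum over $\lambda$ decouples into a product of one-variable sums, each of which is evaluated by Mehler's identity for Rogers--Szeg\H{o} polynomials. You should replace your vague Pieri step with the skew Littlewood identity and the resulting recurrence, and compute the base case directly via Mehler.
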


\begin{proof}
Our starting point is the Littlewood identity \eqref{littlewood-rs}, which we extend to an alphabet $y$ of unspecified size:
\begin{align*}
\sum_{\lambda}
h_{\lambda}(a,b;q)
P_{\lambda}(y;0,q)
=
\prod_{i \geq 1}
\frac{(1+ay_i)(1+by_i)}{1-y_i^2}
\prod_{i < j}
\frac{1-qy_i y_j}{1-y_i y_j}.
\end{align*}
Writing $y = (z,w)$ (that is, replacing $y$ by the union of two smaller alphabets $z$, $w$) and applying the branching rule on the left hand side, we have
\begin{multline}
\label{skew-littlewood-deriv}
\sum_{\lambda}
\sum_{\mu}
h_{\lambda}(a,b;q)
P_{\lambda/\mu}(z;0,q)
P_{\mu}(w;0,q)
=
\\
\prod_{i \geq 1}
\frac{(1+az_i)(1+bz_i)}{1-z_i^2}
\prod_{i < j}
\frac{1-qz_i z_j}{1-z_i z_j}
\cdot
\prod_{i,j \geq 1}
\frac{1-qz_i w_j}{1-z_i w_j}
\cdot
\prod_{i \geq 1}
\frac{(1+aw_i)(1+bw_i)}{1-w_i^2}
\prod_{i < j}
\frac{1-qw_i w_j}{1-w_i w_j}.
\end{multline}
The second last product (that depends on $z$ and $w$) in \eqref{skew-littlewood-deriv} may be expanded using the Cauchy identity, while the final product (that depends on $w$ only) may be expanded by again using the Littlewood identity. This results in the equation
\begin{multline}
\label{skew-littlewood-deriv2}
\sum_{\lambda}
\sum_{\mu}
h_{\lambda}(a,b;q)
P_{\lambda/\mu}(z;0,q)
P_{\mu}(w;0,q)
=
\\
\prod_{i \geq 1}
\frac{(1+az_i)(1+bz_i)}{1-z_i^2}
\prod_{i < j}
\frac{1-qz_i z_j}{1-z_i z_j}
\cdot
\sum_{\nu}
Q_{\nu}(z;0,q)
P_{\nu}(w;0,q)
\cdot
\sum_{\kappa}
h_{\kappa}(a,b;q)
P_{\kappa}(w;0,q).
\end{multline}
Using the multiplication rule for Hall--Littlewood polynomials \cite[Chapter 3, Section 3]{M79}, 
\begin{align*}
P_{\nu}(w;0,q) P_{\kappa}(w;0,q)
=
\sum_{\rho}
f^{\rho}_{\nu\kappa}(q)
P_{\rho}(w;0,q),
\end{align*}
one then has that
\begin{multline}
\label{skew-littlewood-deriv3}
\sum_{\lambda}
\sum_{\mu}
h_{\lambda}(a,b;q)
P_{\lambda/\mu}(z;0,q)
P_{\mu}(w;0,q)
=
\\
\prod_{i \geq 1}
\frac{(1+az_i)(1+bz_i)}{1-z_i^2}
\prod_{i < j}
\frac{1-qz_i z_j}{1-z_i z_j}
\cdot
\sum_{\nu}
\sum_{\kappa}
\sum_{\rho}
h_{\kappa}(a,b;q)
f^{\rho}_{\nu\kappa}(q)
Q_{\nu}(z;0,q)
P_{\rho}(w;0,q).
\end{multline}
Extracting coefficients of $P_{\mu}(w;0,q)$ on both sides of \eqref{skew-littlewood-deriv3} and using \cite[Chapter 3, Equation (5.2)]{M79}
\begin{align*}
\sum_{\nu}
f^{\mu}_{\nu\kappa}(q)
Q_{\nu}(z;0,q)
=
Q_{\mu/\kappa}(z;0,q),
\end{align*}
we arrive at the identity
\begin{multline}
\label{skew-littlewood-0}
\sum_{\lambda}
h_{\lambda}(a,b;q)
P_{\lambda/\mu}(z;0,q)
=
\prod_{i \geq 1}
\frac{(1+az_i)(1+bz_i)}{1-z_i^2}
\prod_{i < j}
\frac{1-qz_i z_j}{1-z_i z_j}
\cdot
\sum_{\kappa}
h_{\kappa}(a,b;q)
Q_{\mu/\kappa}(z;0,q),
\end{multline}
which may be viewed as a skew version of the original Littlewood identity \eqref{littlewood-rs}. If one multiplies both sides of \eqref{skew-littlewood-0} by $t^{|\mu|/2} h_{\mu}(c/\sqrt{t},d/\sqrt{t};q) / b_{\mu}(0,q)$ and sums over $\mu$, the resulting left hand side
\begin{align}
\label{iterate}
\Lambda(z;q,t;a,b,c,d)
:=
\sum_{\lambda}
\sum_{\mu}
\frac{h_{\lambda}(a,b;q)}{b_{\lambda}(0,q)}
\cdot
Q_{\lambda/\mu}(z;0,q)
\cdot
t^{|\mu|/2} h_{\mu}(c/\sqrt{t},d/\sqrt{t};q)
\end{align}
is then seen to obey the recurrence
\begin{align*}
\Lambda(z;q,t;a,b,c,d)
=
\prod_{i \geq 1}
\frac{(1+az_i)(1+bz_i)}{1-z_i^2}
\prod_{i < j}
\frac{1-qz_i z_j}{1-z_i z_j}
\cdot
\Lambda(\sqrt{t} z;q,t;c/\sqrt{t},d/\sqrt{t},a \sqrt{t}, b \sqrt{t}).
\end{align*}
The solution of this recurrence is
\begin{align*}
\Lambda(z;q,t;a,b,c,d)
=
\prod_{i \geq 1}
\frac{(-az_i;t)_{\infty}(-bz_i;t)_{\infty}(-cz_i;t)_{\infty}(-dz_i;t)_{\infty}}{(z_i^2;t)_{\infty}}
\prod_{i<j}
\frac{(qz_i z_j;t)_{\infty}}{(z_i z_j;t)_{\infty}}
\cdot
\Lambda(0;q,t;a,b,c,d),
\end{align*}
where
\begin{multline}
\label{normalization-compute}
\Lambda(0;q,t;a,b,c,d)
=
\sum_{\lambda}
t^{|\lambda|/2}
\frac{h_{\lambda}(a,b;q)h_{\lambda}(c/\sqrt{t},d/\sqrt{t};q)}
{b_{\lambda}(0,q)}
\\
=
\prod_{k=1}^{\infty}
\sum_{m=0}^{\infty}
t^{km}
\frac{h_m(ab;q) h_m(cd/t;q)}{(q;q)_m}
\cdot
\prod_{\ell=1}^{\infty}
\sum_{m=0}^{\infty}
(act^{\ell-1})^m
\frac{h_m(b/a;q) h_m(d/c;q)}{(q;q)_m}.
\end{multline}
The sums appearing on the right hand side of \eqref{normalization-compute} can be computed using Mehler's identity:
\begin{align}
\label{mehler}
\sum_{m=0}^{\infty}
\frac{h_m(\alpha;q) h_m(\beta;q)}{(q;q)_m}
u^m
=
\frac{(\alpha\beta u^2;q)_{\infty}}
{(u;q)_{\infty}(\alpha u;q)_{\infty}(\beta u;q)_{\infty} (\alpha \beta u;q)_{\infty}}.
\end{align}
Application of \eqref{mehler} in \eqref{normalization-compute} yields
\begin{align*}
&
\prod_{k=1}^{\infty}
\sum_{m=0}^{\infty}
t^{km}
\frac{h_m(ab;q) h_m(cd/t;q)}{(q;q)_m}
=
\prod_{k=1}^{\infty}
\frac{(abcdt^{2k-1};q)_{\infty}}
{(t^k;q)_{\infty}(abt^k;q)_{\infty}
(cdt^{k-1};q)_{\infty}(abcdt^{k-1};q)_{\infty}},
\\
\\
&
\prod_{\ell=1}^{\infty}
\sum_{m=0}^{\infty}
(act^{\ell-1})^m
\frac{h_m(b/a;q) h_m(d/c;q)}{(q;q)_m}
=
\prod_{\ell=1}^{\infty}
\frac{(abcdt^{2\ell-2};q)_{\infty}}
{(act^{\ell-1};q)_{\infty}(bct^{\ell-1};q)_{\infty}(adt^{\ell-1};q)_{\infty}(bdt^{\ell-1};q)_{\infty}},
\end{align*}
and we finally conclude that
\begin{multline*}
\Lambda(z;q,t;a,b,c,d)
=
\prod_{i \geq 1}
\frac{(-az_i;t)_{\infty}(-bz_i;t)_{\infty}(-cz_i;t)_{\infty}(-dz_i;t)_{\infty}}{(z_i^2;t)_{\infty}}
\prod_{i<j}
\frac{(qz_i z_j;t)_{\infty}}{(z_i z_j;t)_{\infty}}
\\
\times
\frac{(ab;q)_{\infty}}{(t;t)_{\infty}(qt;q,t)_{\infty}(ab;q,t)_{\infty}(ac;q,t)_{\infty}(ad;q,t)_{\infty}(bc;q,t)_{\infty}(bd;q,t)_{\infty}(cd;q,t)_{\infty}}.
\end{multline*}
It remains to note that what we wish to compute is given by acting with the involution $\omega^{(N)}$ on $\Lambda(z;q,t;a,b,c,d)$; this is seen by direct comparison of \eqref{partition-function} and \eqref{iterate}. One easily shows that
\begin{align*}
&
\omega^{(N)} : \prod_{i \geq 1} (-az_i;t)_{\infty}
\mapsto
\prod_{i=1}^{N} \frac{1}{(a x_i;q,t)}_{\infty},
\\
\\
&
\omega^{(N)} : 
\prod_{i \geq 1} \frac{1}{(z_i^2;t)_{\infty}}
\prod_{i<j}
\frac{(qz_i z_j;t)_{\infty}}{(z_i z_j;t)_{\infty}}
\mapsto
\prod_{1 \leq i <j \leq N}
\frac{1}{(x_i x_j;q,t)_{\infty}},
\end{align*}
and \eqref{partition-function} then follows immediately.
\end{proof}

\begin{rmk}
Recall the definition \eqref{conjecture} of $Z_{n}(x;q,t;a,b,c,d)$. Sending $n \rightarrow \infty$ and using the fact that
\begin{align*}
h_{\infty}(ab;q)
=
\sum_{k=0}^{\infty}
\frac{(ab)^k}{(q;q)_k}
=
\frac{1}{(ab;q)_{\infty}},
\end{align*}
one has
\begin{align*}
Z_{\infty,N}(q,t;a,b,c,d)
=
\frac{1}
{(ab;q)_{\infty} (q;q)_{\infty}}
\sum_{\mu \subseteq \lambda}
\frac{h_{\lambda'}(a,b;q)}
{\prod_{i \geq 1}(q;q)_{\lambda_i-\lambda_{i+1}}}
\cdot
P_{\lambda/\mu}(x;q,0)
\cdot
t^{|\mu|/2}
h_{\mu'}(c/\sqrt{t},d/\sqrt{t};q).
\end{align*}
Invoking the result of Theorem \ref{thm:pf}, we have the direct evaluation
\begin{multline*}
Z_{\infty,N}(q,t;a,b,c,d)
=
\prod_{i=1}^{N}
\frac{1}{(ax_i;q,t)_{\infty}(bx_i;q,t)_{\infty}(cx_i;q,t)_{\infty}(dx_i;q,t)_{\infty}}
\prod_{1 \leq i<j \leq N}
\frac{1}{(x_i x_j;q,t)_{\infty}}
\\
\times
\frac{1}{(q;q)_{\infty}(t;t)_{\infty}(qt;q,t)_{\infty}(ab;q,t)_{\infty}(ac;q,t)_{\infty}(ad;q,t)_{\infty}(bc;q,t)_{\infty}(bd;q,t)_{\infty}(cd;q,t)_{\infty}},
\end{multline*}
which is manifestly symmetric in $(q,t)$ and $(a,b,c,d)$.
\end{rmk}

\section{Additional symmetries}
\label{sec: extra sym}
In this section, we show that the parameters $(a,b,c,d)$ are equivalent to variables in $x$. This will immediately imply the symmetry in $(a,b,c,d)$ of Theorem \ref{thm: qt symmetry}, and simplify the proof of $(q,t)$ symmetry. We also show the type $D$ symmetry of the variables, as the argument uses very similar ideas.

\subsection{Equivalence of parameters and variables}
\begin{prop}
\label{prop: abcd equiv}
Fix $n\geq 0$ and an alphabet $x=(x_1,\dotsc, x_N)$. Let $(x,a,b,c,d)$ denote the combined alphabet $(a,b,c,d,x_1,\dotsc, x_N)$. We have
\begin{equation*}
    Z_{n}(x;q,t;a,b,c,d)=Z_{n}((x,a,b,c,d);q,t;0,0,0,0).
\end{equation*}
\end{prop}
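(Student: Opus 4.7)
The plan is to use the branching rule for skew Macdonald polynomials to absorb the parameters $a,b,c,d$ into the alphabet on the right-hand side. By the symmetry of $P_{\lambda/\mu}$ in its variables, I may split the alphabet as $(x,a,b,c,d)=(c,d,x_1,\dots,x_N,a,b)$ and apply branching iteratively, giving
\[
P_{\rho/\kappa}((x,a,b,c,d);q,0)=\sum_{\kappa\subseteq\sigma\subseteq\tau\subseteq\rho} P_{\sigma/\kappa}(c,d;q,0)\,P_{\tau/\sigma}(x;q,0)\,P_{\rho/\tau}(a,b;q,0).
\]
Substituting this into $Z_n((x,a,b,c,d);q,t;0,0,0,0)$, using $h_{n-\rho_1}(0;q)=1$, and reorganizing the sum by fixing the intermediate partitions $\sigma$ and $\tau$, I identify $\sigma=\mu$ and $\tau=\lambda$ with the LHS summation variables. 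The proposition then reduces to the two local identities
\begin{align*}
\text{(A)}\quad &\sum_{\kappa\subseteq\mu} P_{\mu/\kappa}(c,d;q,0)\,t^{|\kappa|/2}\,h_{\kappa'}(0,0;q)=t^{|\mu|/2}\,h_{\mu'}(c/\sqrt{t},d/\sqrt{t};q),\\
\text{(B)}\quad &\sum_{\substack{\rho\supseteq\lambda\\\rho_1\leq n}}\frac{h_{\rho'}(0,0;q)\,P_{\rho/\lambda}(a,b;q,0)}{(q;q)_{n-\rho_1}\prod_i(q;q)_{\rho_i-\rho_{i+1}}}=\frac{h_{n-\lambda_1}(ab;q)\,h_{\lambda'}(a,b;q)}{(q;q)_{n-\lambda_1}\prod_i(q;q)_{\lambda_i-\lambda_{i+1}}}.
\end{align*}

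For identity (A), I would multiply both sides by $Q_\mu(y;q,0)$ for an auxiliary alphabet $y$ and sum over $\mu$. On the left, the skew Cauchy identity yields $\sum_\mu P_{\mu/\kappa}(c,d;q,0)Q_\mu(y;q,0)=Q_\kappa(y;q,0)\prod_i[(cy_i;q)_\infty(dy_i;q)_\infty]^{-1}$, after which the remaining $\kappa$-sum is evaluated using the $a=b=0$ case of Warnaar's Littlewood identity (Theorem \ref{thm: littlewood}) combined with the Macdonald involution $\omega_{0,q}$ applied in the $y$ alphabet. On the right, by homogeneity of $Q_\mu$ the sum rewrites as $\sum_\mu h_{\mu'}(c/\sqrt t,d/\sqrt t;q)Q_\mu(\sqrt t y;q,0)$, evaluated by the full Warnaar identity with parameters $(c/\sqrt t,d/\sqrt t)$ after applying $\omega_{0,q}$. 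The two evaluations agree because $\omega_{0,q}(\prod_i(1+cy_i)(1+dy_i))=\prod_i[(cy_i;q)_\infty(dy_i;q)_\infty]^{-1}$, a computation that can be verified by evaluating $\omega_{0,q}$ on power sums.

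Identity (B) is more subtle due to the bounded range $\rho_1\leq n$ and the $h_{n-\lambda_1}(ab;q)$ factor on the right. My plan is to pass to generating functions by multiplying both sides by $u^n$ and summing over $n\geq 0$: the Euler identity $\sum_m u^m/(q;q)_m=1/(u;q)_\infty$ resolves the $(q;q)_{n-\rho_1}$ factor on the LHS, while $\sum_m h_m(ab;q)u^m/(q;q)_m=1/[(u;q)_\infty(abu;q)_\infty]$ (a special case of Mehler's identity \eqref{mehler}) resolves the RHS. After canceling the common factor $(u;q)_\infty^{-1}$ and using the standard relation $Q_{\rho/\lambda}(a,b;q,0)\prod_i(q;q)_{\rho_i-\rho_{i+1}}=P_{\rho/\lambda}(a,b;q,0)\prod_i(q;q)_{\lambda_i-\lambda_{i+1}}$, identity (B) becomes equivalent to the unbounded identity
\[
\sum_\rho h_{\rho'}(0,0;q)\,Q_{\rho/\lambda}(a,b;q,0)\,u^{\rho_1-\lambda_1}=\frac{h_{\lambda'}(a,b;q)}{(abu;q)_\infty}.
\]
The main obstacle will be this step: producing the $(abu;q)_\infty$ factor on the right from a suitably iterated application of the skew Warnaar identity \eqref{skew-littlewood-0} in its $q$-Whittaker form (via $\omega_{0,q}$), while carefully tracking the $u^{\rho_1}$ weight. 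This structure mirrors the ``random shift'' $\chi$ appearing in the remark after Theorem \ref{thm: qt symmetry}, which is the convolution of two $q$-geometric shifts with parameters $q$ and $ab$.
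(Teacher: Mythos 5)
Your reduction is essentially the paper's: splitting the combined alphabet via the branching rule and matching the intermediate partitions with $(\mu,\lambda)$ turns the proposition into exactly the two local identities the paper isolates. Your identity (A) is Lemma \ref{lem:cd equiv} after using homogeneity to pull the $t^{|\mu|/2}$ weight inside (note $h_{\kappa'}(0,0;q)=\mathbf{1}[\kappa'\text{ even}]$), and your plan for it — skew Cauchy plus the $a=b=0$ and general Warnaar identities transported through $\omega$ — is workable; the paper instead just quotes Warnaar's Theorem 4.1. Your identity (B), after the rewriting $P_{\rho/\lambda}/\prod(q;q)_{\rho_i-\rho_{i+1}}=Q_{\rho/\lambda}/\prod(q;q)_{\lambda_i-\lambda_{i+1}}$, is precisely Lemma \ref{lem:ab equiv}, and your generating-function reformulation
\[
\sum_{\rho\supseteq\lambda,\ \rho'\text{ even}} Q_{\rho/\lambda}(a,b;q,0)\,u^{\rho_1-\lambda_1}=\frac{h_{\lambda'}(a,b;q)}{(abu;q)_\infty}
\]
is a correct and equivalent restatement.

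The genuine gap is that you do not prove this last identity, and the route you gesture at is unlikely to close it. The weight $u^{\rho_1}$ (equivalently the factor $(q;q)_{n-\rho_1}^{-1}$ and the output $h_{n-\lambda_1}(ab;q)$) tracks the \emph{largest part} of $\rho$, which is not a quantity that passes through the symmetric-function formalism: the skew Warnaar identity \eqref{skew-littlewood-0} and its $\omega$-image only give you the $u=1$ specialization $\sum_{\rho'\text{ even}}Q_{\rho/\lambda}(a,b;q,0)=h_{\lambda'}(a,b;q)/(ab;q)_\infty$, and iterating it does not insert the $u^{\rho_1}$ grading. The paper's proof of Lemma \ref{lem:ab equiv} is instead a direct computation: first reduce the two-letter sum to a one-letter sum by branching (using $h_{\rho'}(a,0;q)=a^{{\rm odd}(\rho')}$), then write $Q_{\rho/\lambda}(b;q,0)$ explicitly as a product of $q$-binomial coefficients over the interlacing gaps, adjoin the $(q;q)_{n-\rho_1}^{-1}$ factor as the ``$i=0$'' gap by setting $\lambda_0=n$, and observe that the sum factorizes into one Rogers--Szeg\H{o} polynomial per gap, the $i=0$ gap producing exactly $h_{n-\lambda_1}(ab;q)$. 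You need some argument of this explicit, gap-by-gap type (or a bijective/Pieri-rule substitute); without it the proof is incomplete at its most substantive step.
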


We will separately show the equivalence for the parameters $(a,b)$ and $(c,d)$. We first state a lemma due to Warnaar which will imply the $(c,d)$ equivalence. It follows from Theorem 4.1 of \cite{W06} after noting that $Q'_{\lambda/\mu}[-\{a,b\};q]=P_{\lambda'/\mu'}(a,b;q,0)$, where $Q'_{\lambda/\mu}$ denotes a modified Hall--Littlewood polynomial. 

\begin{lem}
\label{lem:cd equiv}
We have
\begin{equation*}
    h_{\lambda'}(a,b;q)=\sum_{\mu:\mu'\text{ even}}P_{\lambda/\mu}(a,b;q,0).
\end{equation*}
\end{lem}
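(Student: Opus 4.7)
The plan is to deduce the identity from a result of Warnaar [W06, Theorem 4.1] combined with a conjugation/plethystic translation between modified Hall--Littlewood and $q$-Whittaker polynomials. First, I would recall Warnaar's identity in the form
\begin{equation*}
h_\lambda(a,b;q) = \sum_{\nu \text{ even}} Q'_{\lambda/\nu}[-\{a,b\};q],
\end{equation*}
which expresses $h_\lambda(a,b;q)$ as a sum of skew modified Hall--Littlewood polynomials $Q'_{\lambda/\nu}[X;q]$ evaluated at the negated two-element alphabet $-\{a,b\}$, summed over partitions $\nu$ with all parts even. This is a skew refinement of the Littlewood identity \eqref{littlewood-rs}.

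The bridge between Warnaar's result and our claim is the plethystic identity
\begin{equation*}
Q'_{\lambda/\mu}[-\{a,b\};q] = P_{\lambda'/\mu'}(a,b;q,0),
\end{equation*}
flagged in the statement of the lemma. This is an incarnation of the Macdonald involution $\omega_{q,t}$ at $t=0$, which exchanges $q$-Whittaker polynomials $P(\cdot;q,0)$ with modified Hall--Littlewood polynomials $Q'[-\cdot;q]$ while simultaneously conjugating partitions. To establish it from scratch one can pair both sides with the $\{P_\nu(\cdot;0,q)\}$ basis and invoke the Cauchy-type kernel for $Q'[-X;q]$, or alternatively compare their tableau expansions (cocharge for $Q'$ versus the $q$-Whittaker branching coefficients) after conjugation. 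Either approach is routine once one is careful about the sign convention in the plethystic negation.

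Given these two ingredients, the proof is a one-line reindexing. Substituting $\lambda \mapsto \lambda'$ in Warnaar's identity and then applying the translation identity with the change of variables $\mu = \nu'$ (under which the condition ``$\nu$ even'' becomes ``$\mu'$ even'') gives
\begin{equation*}
h_{\lambda'}(a,b;q) = \sum_{\nu \text{ even}} Q'_{\lambda'/\nu}[-\{a,b\};q] = \sum_{\mu:\,\mu'\text{ even}} P_{\lambda/\mu}(a,b;q,0),
\end{equation*}
which is the desired identity. The main obstacle, and essentially the only nontrivial step, is to supply a clean verification of the translation identity $Q'_{\lambda/\mu}[-\{a,b\};q] = P_{\lambda'/\mu'}(a,b;q,0)$; everything else is bookkeeping on top of Warnaar's theorem.
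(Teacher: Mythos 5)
Your proposal is correct and follows exactly the paper's argument: the paper likewise derives the lemma from Theorem 4.1 of Warnaar combined with the translation identity $Q'_{\lambda/\mu}[-\{a,b\};q]=P_{\lambda'/\mu'}(a,b;q,0)$, followed by the same conjugation/reindexing. The only difference is that you additionally sketch how one would verify the translation identity, which the paper simply asserts.
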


Next, we show the following identity which will give the $(a,b)$ equivalence.

\begin{lem}
\label{lem:ab equiv}
Let $a,b$ be parameters and let $\mu$ be a fixed partition. If $\mu_1\leq n$, we have
\begin{equation*}
    \sum_{\lambda:\lambda_1\leq n,\lambda' \text{ even}}\frac{1}{(q;q)_{n-\lambda_1}}Q_{\lambda/\mu}(a,b;q,0)=\sum_{\lambda:\lambda_1\leq n}\frac{h_{\lambda'}(a,0;q)}{(q;q)_{n-\lambda_1}}Q_{\lambda/\mu}(b;q,0)=\frac{h_{n-\mu_1}(ab;q)h_{\mu'}(a,b;q)}{(q;q)_{n-\mu_1}},
\end{equation*}
and otherwise the first two expressions equal $0$.
\end{lem}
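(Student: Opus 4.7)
The plan is to prove the two equalities separately.

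For the first equality, I would apply the single-variable branching rule $Q_{\lambda/\mu}(a,b;q,0) = \sum_\nu Q_{\lambda/\nu}(a;q,0) Q_{\nu/\mu}(b;q,0)$ and interchange the order of summation to reduce the statement to the pointwise claim
\[
\sum_{\lambda:\lambda_1\leq n,\,\lambda'\text{ even},\,\lambda\supseteq\nu}\frac{Q_{\lambda/\nu}(a;q,0)}{(q;q)_{n-\lambda_1}} = \mathbf{1}[\nu_1\leq n]\cdot\frac{h_{\nu'}(a,0;q)}{(q;q)_{n-\nu_1}}
\]
for every partition $\nu$. The key observation is a rigidity: $Q_{\lambda/\nu}(a;q,0)$ is nonzero only when $\lambda/\nu$ is a horizontal strip, and $\lambda'$ even is equivalent to $\lambda_{2j-1}=\lambda_{2j}$ for all $j\geq 1$; combining these with $\lambda_i\geq\nu_i\geq\lambda_{i+1}$ forces $\lambda_{2j-1}=\lambda_{2j}=\nu_{2j-1}$, leaving exactly one such $\lambda$ when $\nu_1\leq n$ and none otherwise. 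A short calculation with the explicit one-variable formula for the $q$-Whittaker $Q$ shows that all $q$-Pochhammer factors cancel for this unique $\lambda$, yielding $Q_{\lambda/\nu}(a;q,0)=a^{\sum_j(\nu_{2j-1}-\nu_{2j})}=h_{\nu'}(a,0;q)$, where the last equality uses $h_m(0;q)=1$.

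For the second equality I would compute directly. Writing $d_i=\lambda_i-\mu_i$ and adopting the convention $\mu_0:=n$, the horizontal strip condition together with $\lambda_1\leq n$ translates to independent constraints $d_i\in[0,\mu_{i-1}-\mu_i]$, and the sum is manifestly empty if $\mu_1>n$. Substituting the explicit formula for $Q_{\lambda/\mu}(b;q,0)$ together with $h_{\lambda'}(a,0;q)=a^{\sum_j(\lambda_{2j-1}-\lambda_{2j})}$, and regrouping the $a,b$ powers via $a^{d_{2j-1}-d_{2j}}b^{d_{2j-1}+d_{2j}}=(ab)^{d_{2j-1}}(b/a)^{d_{2j}}$, the sum separates into a product of independent sums over the $d_i$. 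The $d_1$-sum yields $h_{n-\mu_1}(ab;q)/(q;q)_{n-\mu_1}$ directly from the defining series of the Rogers--Szeg\H{o} polynomial; the $d_{2j}$- and $d_{2j+1}$-sums ($j\geq 1$) produce $h_{\mu_{2j-1}-\mu_{2j}}(b/a;q)$ and $h_{\mu_{2j}-\mu_{2j+1}}(ab;q)$ respectively. Together with the prefactor $a^{\sum_j(\mu_{2j-1}-\mu_{2j})}$ extracted above, these assemble into $h_{\mu'}(a,b;q)$ by its definition.

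The main obstacle will be the careful parity-based index bookkeeping in the second equality, ensuring that the one-dimensional $d_i$ sums line up with the correct arguments ($ab$ versus $b/a$) to reconstruct $h_{\mu'}(a,b;q)$ exactly; the first equality, by contrast, is a short consequence of branching and the rigid constraint imposed by $\lambda'$ being even.
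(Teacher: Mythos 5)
Your proof is correct and follows essentially the same route as the paper: the second equality is established by the same direct Rogers--Szeg\H{o} resummation over the independent gap variables $d_i\in[0,\mu_{i-1}-\mu_i]$ (with $\mu_0=n$), and the first equality is obtained from the branching rule plus an evaluation of the single-variable sum over $\lambda$ with $\lambda'$ even. The only (harmless) difference is that you evaluate that inner sum directly via the horizontal-strip/even-column rigidity argument, whereas the paper obtains the same identity by specializing the already-proven second equality at $a=0$; both give $\mathbf{1}[\nu_1\leq n]\,h_{\nu'}(a,0;q)/(q;q)_{n-\nu_1}$, and your cancellation of the $q$-Pochhammer factors for the unique surviving $\lambda$ checks out.
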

\begin{proof}
We first prove the second equality. Note that $h_{\lambda'}(a,0;q)=a^{odd(\lambda')}$, where $odd(\lambda')=\sum_{i}m_{2i-1}(\lambda)$ is the number of odd columns in $\lambda$. We write the left hand side as
\begin{equation*}
\begin{split}
    &\sum_{\lambda_1\leq n}\frac{a^{odd(\lambda')}}{(q;q)_{n-\mu_1}}\prod_{i\geq 0}\frac{(q;q)_{\mu_i-\mu_{i+1}}}{(q;q)_{\lambda_i-\mu_{i+1}}(q;q)_{\mu_i-\lambda_i}}b^{|\lambda/\mu|}
    \\=&\frac{a^{odd(\mu')}}{(q;q)_{n-\mu_1}}\sum_{0\leq m_i\leq \mu_i-\mu_{i+1}}\prod_{\substack{i\geq 0\\i\text{ even}}}\frac{(q;q)_{\mu_i-\mu_{i+1}}}{(q;q)_{\mu_i-\mu_{i+1}-m_i}(q;q)_{m_i}}(ab)^{m_i}\prod_{\substack{i\geq 0\\i\text{ odd}}}\frac{(q;q)_{\mu_i-\mu_{i+1}}}{(q;q)_{\mu_i-\mu_{i+1}-m_i}(q;q)_{m_i}}\left(\frac{b}{a}\right)^{m_i},
\end{split}
\end{equation*}
where we let $\mu_0=n$ and $m_i=\lambda_{i+1}-\mu_{i+1}$. In particular, $\sum_i m_{2i}-m_{2i+1}=odd(\lambda')-odd(\mu')$ and $\sum_i m_i=|\lambda/\mu|$ so the powers in $a$ and $b$ match. We can then distribute the sums, in the even and odd cases obtaining either $h_{\mu_i-\mu_{i+1}}(ab;q)$ or $h_{\mu_i-\mu_{i+1}}(b/a;q)$ respectively. In particular, we have an $i=0$ factor, which by definition gives $h_{n-\mu_1}(ab;q)$, and we recognize the right hand side. If $\mu_1>n$, then the sum is clearly $0$.

The first equality follows from the second. In particular, we have
\begin{equation*}
    \sum_{\lambda:\lambda_1\leq n,\lambda' \text{ even}}Q_{\lambda/\mu}(a,b;q,0)=\sum_{\lambda:\lambda_1\leq n,\lambda' \text{ even}}\sum_{\nu}Q_{\lambda/\nu}(a;q,0)Q_{\nu/\mu}(b;q,0),
\end{equation*}
and the sum over $\lambda$ can be evaluated with the second equality when $a=0$, obtaining
\begin{equation*}
    \sum_{\lambda:\lambda_1\leq n,\lambda' \text{ even}}Q_{\lambda/\nu}(a;q,0)=\frac{h_{\nu'}(a,0;q)}{(q;q)_{n-\nu_1}}
\end{equation*}
if $\nu_1\leq n$, and $0$ otherwise. Thus,
\begin{equation*}
    \sum_{\lambda:\lambda_1\leq n,\lambda' \text{ even}}Q_{\lambda/\mu}(a,b;q,0)=\sum_{\nu:\nu_1\leq n}\frac{h_{\nu'}(a,0;q)}{(q;q)_{n-\nu_1}}Q_{\nu/\mu}(b;q,0),
\end{equation*}
proving the first equality.
\end{proof}

Proposition \ref{prop: abcd equiv} follows easily from Lemmas \ref{lem:ab equiv} and \ref{lem:cd equiv}, and the branching rule.

\begin{proof}[Proof of Proposition \ref{prop: abcd equiv}]
We will separately show that the parameters $(a,b)$ and $(c,d)$ can be absorbed into the variables $x$, which we can do by the branching rule. In more detail, we have that $Z_n(x;q,t;a,b,c,d)$ equals
\begin{equation*}
\begin{split}
&\sum_{\mu \subseteq  \lambda}\sum_{\substack{\nu\subseteq \mu:\\\nu'\text{ even}}}\sum_{\substack{\kappa\supseteq \lambda:\\ \kappa_1\leq n, \kappa'\text{ even}}}
\frac{1}{(q;q)_{n-\kappa_1}\prod_{i \geq 1}(q;q)_{\lambda_i-\lambda_{i+1}}}
\cdot
Q_{\kappa/\lambda}(a,b;q,0)P_{\lambda/\mu}(x;q,0)P_{\mu/\nu}(c,d;q,0)
\cdot
t^{|\nu|/2},
\end{split}
\end{equation*}
using Lemmas \ref{lem:ab equiv} and \ref{lem:cd equiv}. In particular the restriction on $\lambda_1$ is no longer needed. We can then compute the sum over $\mu$ easily using the branching rule. After then using the $\prod_{i\geq 1}(q;q)_{\lambda_i-\lambda_{i+1}}$ factors to write $P_{\lambda/\nu}((x,c,d);q,0)$ as $Q_{\lambda/\nu}((x,c,d);q,0)$, we can use the branching rule to evaluate the sum over $\lambda$, obtaining $Z_n((x,a,b,c,d);q,t;0,0,0,0)$.
\end{proof}

\subsection{Type $D$ symmetry of the variables}
We now show that (after suitable normalization), $Z_n$ exhibits type $D$ symmetry in the variables $x$, in that an even number may be inverted without changing the expression. This follows immediately from the following lemma.

\begin{lem}
\label{lem:inv sym identity}
For all $\mu$, we have
\begin{equation*}
    (xy)^n\sum_{\lambda:\lambda_1\leq n,\lambda\text{ even}}\frac{1}{(q;q)_{n-\lambda_1}}Q_{\lambda/\mu}(x^{-1},y^{-1};q,0)=\sum_{\lambda:\lambda_1\leq n,\lambda\text{ even}}\frac{1}{(q;q)_{n-\lambda_1}}Q_{\lambda/\mu}(x,y;q,0).
\end{equation*}
\end{lem}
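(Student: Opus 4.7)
The approach is to apply the branching rule and reduce the two-variable identity to a row-by-row family of Rogers--Szeg\H{o}-type sums, whose palindrome symmetry $h_m(z;q) = z^m h_m(z^{-1};q)$ supplies the required inversion symmetry.

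Specifically, I would first decompose $Q_{\lambda/\mu}(x,y;q,0) = \sum_\nu Q_{\lambda/\nu}(y;q,0)\, Q_{\nu/\mu}(x;q,0)$ via the branching rule and exchange the order of summation, so that both sides of the claim become $\sum_\nu Q_{\nu/\mu}(x^{\pm 1};q,0) \cdot T_\nu(y^{\pm 1})$, where
\begin{equation*}
T_\nu(y) := \sum_{\lambda:\lambda_1\leq n,\,\lambda\text{ even},\,\lambda/\nu\text{ hstrip}}\frac{Q_{\lambda/\nu}(y;q,0)}{(q;q)_{n-\lambda_1}}.
\end{equation*}
Using the explicit one-variable formula (which one derives from $Q_{\lambda/\nu} = (b_\lambda/b_\nu) P_{\lambda/\nu}$ together with the $q$-binomial form of $P_{\lambda/\nu}(y;q,0)$ on a horizontal strip), each $\lambda_i$ ranges independently over even integers in $[\nu_i,\nu_{i-1}]$ (setting $\nu_0 := n$), and the row sums factorize. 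Each row contributes a (truncated) Rogers--Szeg\H{o} polynomial in $y$.

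The palindrome identity then establishes that $y^{(\cdots)}T_\nu(y^{-1})$ equals $T_\nu(y)$ up to an explicit monomial factor, and the analogous argument for $Q_{\nu/\mu}(x;q,0)$ under $x\to x^{-1}$ (which follows from the same one-variable formula applied to $x$-factors) gives the dual $x$-symmetry. The monomial factors combine precisely to $(xy)^n$, establishing the pointwise identity at fixed $\nu$; summing over $\nu$ then yields the lemma. An equivalent formulation of the proof is a bijective one: one defines a rectangular complement $\lambda \mapsto \lambda^c$ that preserves the parity constraint and such that the summand at $\lambda$ on the inversion side equals the summand at $\lambda^c$ on the direct side, so that the bijection matches the two sides term by term.

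I expect the main obstacle to be the topmost row: the factor $(q;q)_{n-\lambda_1}$ and the bound $\lambda_1 \leq n$ couple with the Rogers--Szeg\H{o} structure differently from the bulk rows. The row-$1$ summation must be absorbed together with $(q;q)_{n-\lambda_1}$ into a single Rogers--Szeg\H{o} polynomial of index $n - \nu_1$ (rather than the generic $\nu_{i-1}-\nu_i$); once this absorption is carried out correctly, the palindrome identity applies uniformly across all rows and the inversion symmetry under $(x,y)\to(x^{-1},y^{-1})$ is immediate. Verifying this top-row identity reduces to a specific Rogers--Szeg\H{o} palindrome after a reindexing, which should go through essentially as in the analysis underlying Lemma~\ref{lem:ab equiv}.
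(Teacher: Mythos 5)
Your proposal assembles the right ingredients (row-by-row factorization after branching, the $q$-binomial palindromy $\binom{m}{k}_q=\binom{m}{m-k}_q$ behind Rogers--Szeg\H{o} symmetry, and a complementation bijection), and your closing bijective reformulation is close in spirit to what the paper actually does. But two of your key claims are false as stated. First, the ``pointwise identity at fixed $\nu$'' cannot hold: the $\nu$-term of the right-hand side is $Q_{\nu/\mu}(x;q,0)\,T_\nu(y)$, and $Q_{\nu/\mu}(x;q,0)$ in a single variable is a constant times the monomial $x^{|\nu/\mu|}$, whereas the corresponding term of $(xy)^n\sum_\nu Q_{\nu/\mu}(x^{-1};q,0)T_\nu(y^{-1})$ has $x$-degree $n-|\nu/\mu|$. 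Already for $\mu=\varnothing$, $n=2$, the $\nu=\varnothing$ term on the inverted side equals the $\nu=(2)$ term on the direct side, not the $\nu=\varnothing$ term; the matching must permute the intermediate partitions nontrivially. Second, and more seriously, the complement does \emph{not} preserve the parity constraint: complementing $\lambda_i$ within $[\nu_i,\nu_{i-1}]$ sends an even $\lambda_i$ to one of parity $\nu_i+\nu_{i-1}$. Concretely, your row sums are not full Rogers--Szeg\H{o} polynomials but their even parts, and $y^m h_m^{\mathrm{even}}(y^{-1};q)$ equals the part of $h_m(y;q)$ of parity $m\bmod 2$, so palindromy fails to preserve the constraint whenever an interval has odd length. (If one instead reads the constraint as ``$\lambda'$ even,'' which is what the application in Lemma \ref{lem: inv sym} requires, the rows do not even decouple, since that constraint is $\lambda_{2i-1}=\lambda_{2i}$.)

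The missing idea is to eliminate the parity constraint \emph{before} complementing. The paper first invokes Lemma \ref{lem:ab equiv} to sum out one of the two variables, rewriting the constrained two-variable sum as an \emph{unconstrained} one-variable sum $\sum_{\lambda:\lambda_1\le n} x^{odd(\lambda')}\,Q_{\lambda/\mu}(y;q,0)/(q;q)_{n-\lambda_1}$, so that the parity restriction is traded for the monomial weight $x^{odd(\lambda')}$. Only then does the interval complementation $\widetilde\lambda_i=\mu_i+(\mu_{i-1}-\lambda_i)$ (with $\mu_0=n$) apply cleanly: it preserves the product of $(q;q)$-factors and sends $|\lambda/\mu|\mapsto n-|\lambda/\mu|$ and $odd(\lambda')\mapsto n-odd(\lambda')$, which is exactly the source of the prefactor $(xy)^n$. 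To salvage your route you would either have to prove Lemma \ref{lem:ab equiv} along the way (at which point you are reproducing the paper's argument) or construct a joint, interlacing- and parity-compatible complementation on pairs $(\nu,\lambda)$, which is considerably more delicate than what you describe; your correct observation about absorbing $(q;q)_{n-\lambda_1}$ into a top-row $q$-binomial is necessary but not sufficient.
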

\begin{proof}
By Lemma \ref{lem:ab equiv}, we may equivalently prove
\begin{equation*}
    (xy)^n\sum_{\lambda:\lambda_1\leq n}\frac{x^{-odd(\lambda')}}{(q;q)_{n-\lambda_1}}Q_{\lambda/\mu}(y^{-1};q,0)=\sum_{\lambda:\lambda_1\leq n}\frac{x^{odd(\lambda')}}{(q;q)_{n-\lambda_1}}Q_{\lambda/\mu}(y;q,0),
\end{equation*}
which can be written explicitly as
\begin{equation*}
\begin{split}
    &(xy)^n\sum_{\lambda:\lambda_1\leq n}\frac{x^{-odd(\lambda')}y^{-|\lambda/\mu|}}{(q;q)_{n-\lambda_1}}\prod_{i}\frac{(q;q)_{\mu_i-\mu_{i+1}}}{(q;q)_{\lambda_i-\mu_{i}}(q;q)_{\mu_i-\lambda_{i+1}}}
    \\=&\sum_{\lambda:\lambda_1\leq n}\frac{x^{odd(\lambda')}y^{|\lambda/\mu|}}{(q;q)_{n-\lambda_1}}\prod_{i}\frac{(q;q)_{\mu_i-\mu_{i+1}}}{(q;q)_{\lambda_i-\mu_{i}}(q;q)_{\mu_i-\lambda_{i+1}}}.
\end{split}
\end{equation*}
We will prove this bijectively. In particular, the map $\lambda\mapsto \widetilde{\lambda}$ with $\widetilde{\lambda}_i=\mu_i+(\mu_{i-1}-\lambda_i)$ (and $\mu_0=n$ for convenience) is a weight-preserving bijection between terms on either side.

To see this, note that we may reparametrize the sum over $\lambda_i-\mu_i=m_i\in [0,\mu_{i-1}-\mu_i]$, and the map $\lambda\mapsto \widetilde{\lambda}$ complements $m_i$ in each interval. This clearly preserves the product
\begin{equation*}
    (q;q)_{n-\lambda_1}\prod_{i}(q;q)_{\lambda_i-\mu_i}(q;q)_{\mu_i-\lambda_{i+1}},
\end{equation*}
which is nothing more than $\prod_{l\in L} (q;q)_l$ where $L$ is the set of all lengths $m_i=\lambda_i-\mu_i$, and their complements $\widetilde{\lambda}_i-\mu_i$. 

Moreover, one can check that $|\widetilde{\lambda}/\mu|=n-|\lambda/\mu|$ and $n-odd(\widetilde{\lambda}')=odd(\lambda')$ using the fact that the map $\lambda\mapsto \widetilde{\lambda}$ has this interpretation in terms of complementation. In particular, there are $n$ total boxes in the intervals $[\mu_i,\mu_{i+1}]$ of which $|\lambda/\mu|$ are used, and so after complementation $|\widetilde{\lambda}/\mu|$ are used. Similarly, within each interval $[\mu_i,\mu_{i+1}]$, if $i$ is even, there are $\lambda_{i+1}-\mu_{i+1}$ many odd columns and $\mu_{i}-\lambda_{i+1}$ many even columns, and vice versa if $i$ is odd. In either case, $\lambda\mapsto \widetilde{\lambda}$ exchanges the even and odd columns in each interval.
\end{proof}

\begin{lem}
\label{lem: inv sym}
Fix a finite alphabet $x=(x_1,\dotsc, x_N)$. We have
\begin{equation*}
    (x_1x_2)^nZ_n(x_1^{-1},x_2^{-2},x_3,\dotsc, x_N;q,t;a,b,c,d)=Z_n(x;q,t;a,b,c,d).
\end{equation*}
\end{lem}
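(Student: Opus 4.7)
The plan is to reduce to the case $a=b=c=d=0$ via Proposition \ref{prop: abcd equiv}, isolate the dependence on $x_1,x_2$ via the branching rule, and then apply Lemma \ref{lem:inv sym identity}. By Proposition \ref{prop: abcd equiv}, $Z_n(x;q,t;a,b,c,d) = Z_n((a,b,c,d,x_1,\ldots,x_N);q,t;0,0,0,0)$, and likewise after inverting $x_1,x_2$. Since $P_{\lambda/\mu}$ is symmetric in its alphabet, it suffices to prove that, in the reduced setting with any alphabet $y=(y_1,\ldots,y_M)$, the quantity $Z_n(y;q,t;0,0,0,0)$ is invariant under the substitution $y_1\mapsto y_1^{-1},\,y_2\mapsto y_2^{-1}$ combined with multiplication by $(y_1y_2)^n$.

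In the reduced setting the boundary weights collapse: $h_{n-\lambda_1}(0;q)=1$, and $h_{\lambda'}(0,0;q)$, $h_{\mu'}(0,0;q)$ are indicators that $\lambda'$, $\mu'$ are even. I would apply the skew branching rule $P_{\lambda/\mu}(y;q,0)=\sum_\nu P_{\lambda/\nu}(y_1,y_2;q,0)\,P_{\nu/\mu}(y_3,\ldots,y_M;q,0)$ and swap sums, so that $Z_n$ factors as a sum over $\nu,\mu$ of a $(y_1,y_2)$-independent piece times
\begin{equation*}
V(\nu;y_1,y_2) := \sum_{\substack{\lambda\supseteq\nu,\ \lambda_1\leq n \\ \lambda'\text{ even}}}\frac{P_{\lambda/\nu}(y_1,y_2;q,0)}{(q;q)_{n-\lambda_1}\prod_{i\geq 1}(q;q)_{\lambda_i-\lambda_{i+1}}}.
\end{equation*}
Hence the problem reduces to showing $(y_1y_2)^n V(\nu;y_1^{-1},y_2^{-1})=V(\nu;y_1,y_2)$ for every $\nu$.

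To invoke Lemma \ref{lem:inv sym identity}, which is stated in terms of $Q_{\lambda/\nu}$, I would convert $P_{\lambda/\nu}$ to $Q_{\lambda/\nu}$. Equating the two expansions of $Q_\lambda(y_1,y_2)$ coming from the branching rules of $P$ and $Q$, together with $Q_\lambda=b_\lambda P_\lambda$, yields the general identity $Q_{\lambda/\nu}(\cdot;q,t)=(b_\lambda(q,t)/b_\nu(q,t))P_{\lambda/\nu}(\cdot;q,t)$. At $t=0$, grouping the boxes $s\in\lambda$ with $l(s)=0$ by the column in which they lie gives
\begin{equation*}
b_\lambda(q,0)=\prod_{k\geq 1}\prod_{m=1}^{\lambda_k-\lambda_{k+1}}\frac{1}{1-q^m}=\frac{1}{\prod_{i\geq 1}(q;q)_{\lambda_i-\lambda_{i+1}}},
\end{equation*}
so the prefactor $\prod_{i\geq 1}(q;q)_{\lambda_i-\lambda_{i+1}}^{-1}$ already built into $V(\nu)$ matches exactly the conversion factor $b_\lambda(q,0)/b_\nu(q,0)$, up to the $(y_1,y_2)$-independent factor $\prod_{i\geq 1}(q;q)_{\nu_i-\nu_{i+1}}^{-1}$. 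Substituting gives
\begin{equation*}
V(\nu;y_1,y_2)=\frac{1}{\prod_{i\geq 1}(q;q)_{\nu_i-\nu_{i+1}}}\sum_{\substack{\lambda\supseteq\nu,\ \lambda_1\leq n \\ \lambda'\text{ even}}}\frac{Q_{\lambda/\nu}(y_1,y_2;q,0)}{(q;q)_{n-\lambda_1}},
\end{equation*}
which is precisely the expression controlled by Lemma \ref{lem:inv sym identity}; applying that lemma completes the proof.

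The main obstacle is the bookkeeping in the conversion step: one must verify that the prefactor $\prod_{i\geq 1}(q;q)_{\lambda_i-\lambda_{i+1}}$ appearing in the definition of $Z_n$ is exactly the inverse of $b_\lambda(q,0)$ in the $q$-Whittaker specialization, so that the $P\to Q$ substitution produces the sum to which Lemma \ref{lem:inv sym identity} applies. All of the genuine combinatorial content sits in Lemma \ref{lem:inv sym identity}; the present proof is a structural reduction.
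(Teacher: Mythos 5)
Your proposal is correct and follows exactly the route the paper takes (the paper's own proof is just the one-line "reduce to $a=b=c=d=0$ via Proposition \ref{prop: abcd equiv}, then apply the branching rule and Lemma \ref{lem:inv sym identity}"); you have simply written out the details, and in particular your bookkeeping that $b_\lambda(q,0)^{-1}=\prod_{i\geq1}(q;q)_{\lambda_i-\lambda_{i+1}}$ so that the prefactor in $Z_n$ converts $P_{\lambda/\nu}$ into $Q_{\lambda/\nu}/\prod_{i\geq1}(q;q)_{\nu_i-\nu_{i+1}}$ is exactly what is needed to land on the sum appearing in Lemma \ref{lem:inv sym identity} (whose evenness condition should be read as $\lambda'$ even, matching the reduced weight $h_{\lambda'}(0,0;q)$).
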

\begin{proof}
By Proposition \ref{prop: abcd equiv}, we can take $a=b=c=d=0$. Then Lemma \ref{lem:inv sym identity} and the branching rule immediately imply the equality.
\end{proof}

The following proposition is immediate from Lemma \ref{lem: inv sym}, Proposition \ref{prop: abcd equiv}, and the fact that $Z_n$ is symmetric in the $x$ variables.
\begin{prop}
Fix a finite alphabet $x=(x_1,\dotsc, x_N)$. 
The expression
\begin{equation*}
    \prod_{i}x_i^{-n/2}Z_n(x;q,t;a,b,c,d)
\end{equation*}
is invariant under an even number of transformations $x_i\mapsto x_i^{-1}$. In fact, the expression 
\begin{equation*}
    (abcd)^{-n/2}\prod_{i}x_i^{-n/2}Z_n(x;q,t;a,b,c,d)
\end{equation*}
is invariant under the simultaneous transformations $x_i\mapsto x_i^{-1}$, $a\mapsto a^{-1}$.
\end{prop}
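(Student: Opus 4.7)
The plan is to deduce the proposition from Lemma \ref{lem: inv sym}, Proposition \ref{prop: abcd equiv}, and the symmetry of $Z_n$ in the $x$-alphabet, handling the two assertions of the proposition in turn.

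First I would prove the even-number statement by setting $F(x) := \prod_i x_i^{-n/2} Z_n(x;q,t;a,b,c,d)$ and checking that $F$ is invariant under inverting any pair of the $x_i$. Rearranging Lemma \ref{lem: inv sym} gives
\begin{equation*}
Z_n(x_1^{-1}, x_2^{-1}, x_3, \dotsc, x_N; q,t;a,b,c,d) = (x_1 x_2)^{-n}\, Z_n(x;q,t;a,b,c,d),
\end{equation*}
while under the substitution $x_1 \mapsto x_1^{-1}, x_2 \mapsto x_2^{-1}$ the prefactor $\prod_i x_i^{-n/2}$ picks up a factor $(x_1 x_2)^{n}$, exactly cancelling the factor produced by $Z_n$. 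Hence $F$ is invariant under simultaneously inverting $x_1$ and $x_2$; since $Z_n$ is symmetric in $x$, the same holds for any pair, and iterating gives invariance under inverting any even number of variables.

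For the second statement, the strategy is to absorb the boundary parameters into the alphabet using Proposition \ref{prop: abcd equiv}, which gives $Z_n(x;q,t;a,b,c,d) = Z_n((x,a,b,c,d);q,t;0,0,0,0)$. Then
\begin{equation*}
(abcd)^{-n/2} \prod_i x_i^{-n/2}\, Z_n(x;q,t;a,b,c,d)
\end{equation*}
coincides with $F$ applied to the extended $(N+4)$-variable alphabet $(x,a,b,c,d)$ at vanishing boundary parameters. The substitution $x_i \mapsto x_i^{-1}, a \mapsto a^{-1}$ inverts exactly two variables of this extended alphabet, so the first part of the proposition, applied to the extended $F$, delivers the required invariance. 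The main thing to watch is the bookkeeping of the half-integer exponent $n/2$, but this is harmless since each application of Lemma \ref{lem: inv sym} contributes an integer power of $x_i x_j$, and changes in the prefactor always pair up half-powers into matching integer powers.
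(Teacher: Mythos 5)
Your proposal is correct and is exactly the argument the paper intends: the paper offers no written proof beyond declaring the proposition immediate from Lemma \ref{lem: inv sym}, Proposition \ref{prop: abcd equiv}, and the symmetry of $Z_n$ in $x$, and you have filled in precisely those steps (cancelling the $(x_1x_2)^{n}$ factors against the prefactor, then absorbing $a,b,c,d$ into the alphabet to reduce the second claim to the first). Note only that the $x_2^{-2}$ in the paper's statement of Lemma \ref{lem: inv sym} is a typo for $x_2^{-1}$, which you have correctly read through.
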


\section{Contour integral formulas for free boundary $q$-Whittaker measure}
\label{sec: main pf}
We note that by Proposition \ref{prop: abcd equiv}, we can take $a=b=c=d=0$ in the proof of Theorem \ref{thm: qt symmetry} without loss of generality. For convenience, we define $Z_n(x;q,t)=Z_n(x;q,t;0,0,0,0)$. We begin by stating the main result of this section, which are the following contour integral formulas for $Z_n(x;q,t)$ which immediately imply the $(q,t)$ symmetry of Theorem \ref{thm: qt symmetry}. To ease the notation, we introduce the convention that $f(y_i^{\pm})=f(y_i)f(y_i^{-1})$ for any function $f$. So for example $(1-y_i^{\pm}y_j^{\pm})=(1-y_iy_j)(1-y_iy_j^{-1})(1-y_i^{-1}y_j)(1-y_i^{-1}y_j^{-1})$. If $y=(y_1,\dotsc, y_m)$, we let
\begin{equation}
    \Delta(q,t;y)=\prod_{i<j}\frac{(1-y_i^{\pm}y_j^{\pm})(1-qty_i^{\pm}y_j^{\pm})}{(1-qy_i^{\pm}y_j^{\pm})(1-ty_i^{\pm}y_j^{\pm})}\prod_{i}\frac{1}{(1-y_i^{\pm 2})(1-qy_i^{\pm 2})(1-ty_i^{\pm 2})}.
\end{equation}

\begin{thm}
\label{thm: contour int nice}
Fix $q,t<1$, and let $C$ be a contour of radius $r$ centered at $0$, for some $r$ such that $1<r<\min(q^{-1/2},t^{-1/2})$. Let $x=(x_1,\dotsc, x_N)$ be a finite alphabet. Then if $n=2m$, we have
\begin{equation*}
\begin{split}
Z_{2m}(x;q,t)=&\frac{1}{2^mm!}\frac{(1-qt)^m}{(1-q)^m(1-t)^m}\oint_{C}\frac{dy_1}{2\pi i}\dotsm \oint_{C}\frac{dy_m}{2\pi i}\Delta(q,t;y) \prod_{i,j}(1+x_iy_j^{\pm})
\\&+\frac{1}{2^{m-1}(m-1)!}\frac{(1-qt)^m(1+qt)}{2(1-q)^{m}(1-q^2)(1-t)^{m}(1-t^2)}\oint_{C}\frac{dy_1}{2\pi i}\dotsm \oint_{C}\frac{dy_{m-1}}{2\pi i}\Delta(q,t;y)
\\&\qquad\qquad\times\prod_{i}\frac{(1-y_i^{\pm 2})(1-q^2t^2y_i^{\pm 2})}{(1-q^2y_i^{\pm 2})(1-t^2y_i^{\pm 2})}\prod_{i,j}(1+x_iy_j^{\pm})\prod_i (1-x_i^2)
\end{split}
\end{equation*}
and if $n=2m+1$, we have
\begin{multline*}
Z_{2m+1}(x;q,t)=\frac{1}{2^mm!}\frac{(1-qt)^m}{(1-q)^{m+1}(1-t)^{m+1}}\oint_{C}\frac{dy_1}{2\pi i}\dotsm \oint_{C}\frac{dy_m}{2\pi i} \Delta(q,t;y)\prod_{i,j}(1+x_iy_j^{\pm})
\\ \times\left[\prod_i \frac{(1-y_i^{\pm})(1-qty_i^{\pm})}{(1-qy_i^{\pm})(1-ty_i^{\pm})}\prod_j (1+x_j)+\prod_i \frac{(1+y_i^{\pm})(1+qty_i^{\pm})}{(1+qy_i^{\pm})(1+ty_i^{\pm})}\prod_j (1-x_j)\right].
\end{multline*}
\end{thm}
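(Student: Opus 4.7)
The plan is to establish an (initially asymmetric) contour integral representation of $Z_n(x;q,t)$ via Hall--Littlewood summation identities, and then symmetrize it by a delicate residue analysis. By Proposition~\ref{prop: abcd equiv} it suffices to treat $a=b=c=d=0$, in which case $h_{\lambda'}(0,0;q)$ and $h_{\mu'}(0,0;q)$ force both $\lambda'$ and $\mu'$ to have all parts even. Applying the Macdonald involution $\omega^{(N)}_{q,0}$, which exchanges $P_{\lambda/\mu}(x;q,0)$ for $Q_{\lambda'/\mu'}(y;0,q)$, and setting $\alpha=\lambda'$, $\beta=\mu'$, the sum becomes one over $\beta\subseteq\alpha$ with $\ell(\alpha)\leq n$ and both $\alpha,\beta$ even, weighted by $t^{|\beta|/2}/((q;q)_{n-\ell(\alpha)}\,b_\alpha(0,q))$.

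I would then evaluate the inner sum over even $\beta$ by iterating the skew Littlewood identity \eqref{skew-littlewood-0} at $a=b=0$, in the telescoping manner used to prove Theorem~\ref{thm:pf}. The sum over $\alpha$ with $\ell(\alpha)\leq n$ is handled by introducing auxiliary integration variables $y_1,\dotsc,y_m$ with $m=\lfloor n/2\rfloor$: the even-parts condition halves the effective number of variables via the Littlewood identity \eqref{littlewood}, while the length bound is captured by a Cauchy-type contour extraction using the dual-Cauchy identity $\sum_\lambda P_\lambda(x;q,0)Q_{\lambda'}(y;0,q)=\prod(1+x_iy_j)$. This yields a first contour integral formula whose integrand contains $\prod_{i,j}(1+x_iy_j^{\pm})$ together with Hall--Littlewood factors $(1-qy_iy_j)$ and $(1-qy_i^{\pm 2})^{-1}$, but in which $t$ appears only as a geometric weight and not symmetrically through $(1-ty_iy_j)^{-1}$-type factors.

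The main obstacle is the final symmetrization. The remaining $t$-weighted geometric sums create additional poles in the auxiliary variables, and by carefully deforming contours within the annulus $1<|y_i|<\min(q^{-1/2},t^{-1/2})$ and collecting residues at the points $y_i=\pm 1,\pm\sqrt{q},\pm\sqrt{t}$ and at pair poles $y_iy_j=q,t$, one can reorganize the integrand into the manifestly $(q,t)$-symmetric form $\Delta(q,t;y)\prod_{i,j}(1+x_iy_j^{\pm})$ plus lower-dimensional residue contributions. For $n=2m+1$ these residues recombine as the two additive terms inside the $m$-fold integral, distinguished by the sign choice $y_i\mapsto\pm y_i$; for $n=2m$ they produce the auxiliary $(m-1)$-fold integral, corresponding to a pair of auxiliary variables collapsing to a reciprocal configuration. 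Tracking which poles contribute, which cancel, and how the leftover boundary terms organize into precisely the stated two-piece formulas is the most delicate step, and is where the free boundary setting departs substantially from the periodic analysis of \cite{HW23}, whose initial contour integral already exhibited the desired $(q,t)$ symmetry.
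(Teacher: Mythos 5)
Your overall two-stage strategy --- first derive an asymmetric contour integral representation, then symmetrize by residue analysis --- is indeed the route the paper takes, and your reduction to $a=b=c=d=0$ and use of the Macdonald involution match the actual argument. However, the intermediate formula you describe is not quite right: after substituting the $n$-fold integral representation of $P_{\lambda/\mu}(x;q,0)$ and summing \emph{both} the $\lambda$- and $\mu$-sums with the Littlewood identity \eqref{littlewood} (the $\mu$-sum in the alphabet $\sqrt{t}\,y^{-1}$, absorbing the geometric weight $t^{|\mu|/2}$), one obtains an $n$-fold integral (Proposition~\ref{prop: simpler formula}) in which $t$ already enters through factors $(1-ty_iy_j)^{-1}$ and $(1-ty_i^2)^{-1}$; there is no stage at which $t$ survives ``only as a geometric weight'' with $\lfloor n/2\rfloor$ variables. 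More importantly, the residue analysis --- which you correctly identify as the crux --- is only gestured at, and the specific claims you make about it are wrong. On the contour $1<|y_i|<\min(q^{-1/2},t^{-1/2})$ the points $y_i=\pm\sqrt{q},\pm\sqrt{t}$ and the loci $y_iy_j=t$ are \emph{not} enclosed poles, and $y_iy_j=q$ is a zero of the integrand, not a pole. The poles that actually contribute are $y_i=qy_j$ (whose residues vanish), $y_i=\pm 1$, and $y_iy_j=1$.

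The missing idea is the inductive bookkeeping of Lemma~\ref{lem: induction}: after taking a residue at $y_n=+1$ (resp.\ $-1$) the factor $\prod_{i<n}(1-y_i)$ (resp.\ $\prod_{i<n}(1+y_i)$) appears and kills that pole for all remaining variables, so each of $\pm 1$ can be chosen at most once, while all other residues must be of the reciprocal type $y_j=y_i^{-1}$ and hence organize into perfect matchings. This is what forces exactly the three structural outcomes --- for $n=2m$ either no sign-residues (an $m$-fold integral) or both $+1$ and $-1$ (an $(m-1)$-fold integral), and for $n=2m+1$ exactly one of $\pm1$ (the two additive terms) --- together with the multiplicities $(2m)!/(2^mm!)$ etc.\ that produce the stated prefactors. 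Your description attributes the $(m-1)$-fold term to ``a pair of variables collapsing to a reciprocal configuration,'' but the reciprocal pairing is the \emph{generic} mechanism producing the main term; the extra term comes from consuming both sign-residues. Without the vanishing of the $y_i=qy_j$ residues and the once-only property of the $\pm1$ residues, the reorganization you outline does not close, so as written the proposal has a genuine gap at its central step.
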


\begin{rmk}
One can also interpret the contour integrals in Theorem \ref{thm: contour int nice} formally. This involves expanding the integrand into a power series in $q$ and $t$, whose coefficients are Laurent polynomials in the integration variables. The contour integral then simply picks out the constant term.
\end{rmk}

\begin{rmk}
Although the formulas given in Theorem \ref{thm: contour int nice} appear unwieldy, let us point out that the integrands have many remarkable properties. In particular, they have a symmetry in the parameters $(q,t)$ which imply the $(q,t)$ symmetry of Theorem \ref{thm: qt symmetry}. Moreover, they also have a BC symmetry in the integration variables $y_i$ (they can be exchanged and inverted without affecting the integrand). Together with the fact that $2^mm!$ is the size of the type BC Weyl group, this perhaps suggests that our formulas are symmetrized versions of a simpler formula over the type BC Weyl group. 
%find reference in type A?
\end{rmk}

\begin{rmk}
In order to add the parameters $a,b,c,d$ back into the formulas, one simply adds $a,b,c,d$ to the variables $x_i$. This will result in an extra factor of $\prod_i (1+ay_i^{\pm})(1+by_i^{\pm})(1+cy_i^{\pm})(1+dy_i^{\pm})$ (along with some additional constant factors in the appropriate places). One particularly nice simplification occurs if $a=1$ and $b=-1$, as then $Z_{2m+1}=0$ and the second integral in $Z_{2m}$ vanishes, leading to a simpler formula. 
\end{rmk}

Let us give an outline of the proof of Theorem \ref{thm: contour int nice}. In Section \ref{sec: prelim formula}, we prove a simpler contour integral formula, Proposition \ref{prop: simpler formula}, which does not have the desired symmetry. We then analyze the residues in Section \ref{sec: residues}, and show that Theorem \ref{thm: contour int nice} follows after evaluating some (but not all) of the contour integrals in terms of residues.

\subsection{A preliminary formula}
\label{sec: prelim formula}
In order to prove Theorem \ref{thm: contour int nice}, we first establish a simpler contour integral formula, which unfortunately does not have any of the desired symmetry properties.
\begin{prop}
\label{prop: simpler formula}
We have 
\begin{multline}
\label{eq: simpler formula}
Z_{n}(x;q,t)
=
\frac{1}{{n!(1-q)^n}}
\oint_{C} \frac{y_1dy_1}{2\pi i }
\cdots
\oint_{C} \frac{y_ndy_n}{2\pi i}
\\
\prod_{1 \leq i\not= j \leq n}
\frac{y_i-y_j}{y_i-q y_j}
\prod_i\frac{1}{(y_i^2-1)(1-ty_i^2)}\prod_{1\leq i<j\leq n}\frac{(y_iy_j-q)(1-qty_iy_j)}{(y_iy_j-1)(1-ty_iy_j)}
\prod_{i=1}^{n}
\prod_{j=1}^{N}
(1+y_i x_j),
\end{multline}
where now the contours $C$ are circles of radius $r$, with $1<r<t^{-1/2}$.
\end{prop}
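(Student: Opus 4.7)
The plan is to adapt the Cauchy-identity approach of \cite{HW23} to the free boundary setting, using Warnaar's Littlewood identity (Theorem \ref{thm: littlewood}) to absorb the two boundary evenness conditions imposed on $\lambda'$ and $\mu'$ by the choice $a=b=c=d=0$. The first step is to apply the dual Cauchy identity
\begin{equation*}
\prod_{i=1}^n \prod_{j=1}^N (1 + y_i x_j) = \sum_{\lambda : \lambda_1 \leq n} P_\lambda(x;q,0)\, Q_{\lambda'}(y;0,q)
\end{equation*}
in $n$ auxiliary variables $y=(y_1,\ldots,y_n)$; the restriction $\lambda_1 \leq n$ is automatic because the $n$-variable Hall--Littlewood polynomial $Q_{\lambda'}(y;0,q)$ vanishes whenever $\ell(\lambda')>n$, matching precisely the cutoff in the definition of $Z_n$. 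After expanding the skew $q$-Whittaker polynomial $P_{\lambda/\mu}$ via a skew dual Cauchy identity, one reduces to two partition sums over $\lambda'$ and $\mu'$ both even, weighted by Hall--Littlewood polynomials in $y$.

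Next I would evaluate these two sums via Theorem \ref{thm: littlewood} with $a=b=0$. The $\mu$ sum, weighted by $t^{|\mu|/2}$, is handled after rescaling $y\mapsto \sqrt t\, y$ and contributes
\begin{equation*}
\prod_i \frac{1}{1-ty_i^2}\prod_{i<j}\frac{1-qty_iy_j}{1-ty_iy_j},
\end{equation*}
while the $\lambda$ sum contributes $\prod_i(1-y_i^2)^{-1}\prod_{i<j}(1-qy_iy_j)/(1-y_iy_j)$. The residual symmetrization over $S_n$ implicit in the Hall--Littlewood polynomials is then converted to a contour integral using the Hall--Littlewood / Macdonald integration formula, whose density $\prod_{i\neq j}(y_i-y_j)/(y_i-qy_j)$ is exactly the weight appearing in \eqref{eq: simpler formula}. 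The contour radii $1<r<t^{-1/2}$ are chosen so that only the poles at $y_i=0$ contribute, avoiding the spurious poles at $y_iy_j=1$ and $y_iy_j=t^{-1}$ introduced by the Littlewood factors. Tracking the prefactors $(1-q)^{-n}$ (from the $q$-Whittaker normalization $1/\prod_i(q;q)_{\lambda_i-\lambda_{i+1}}$) and $1/n!$ (from Weyl symmetrization) completes the identification with \eqref{eq: simpler formula}.

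The main obstacle lies in reconciling the Littlewood factor $(1-qy_iy_j)/(1-y_iy_j)$ produced by Theorem \ref{thm: littlewood} with the asymmetric factor $(y_iy_j-q)/(y_iy_j-1)$ actually appearing in the integrand of \eqref{eq: simpler formula}: these two rational functions share their poles but have distinct zeros, so matching them requires a residue shuffle permissible under the contour choice $1<r<t^{-1/2}$, combined with absorbing explicit powers of $y_i$ into the measure $y_i\,dy_i$. Tracking the resulting signs --- in particular $(-1)^n$ from $\prod_i(y_i^2-1)$ and $(-1)^{\binom{n}{2}}$ from $\prod_{i<j}(y_iy_j-1)$ --- together with matching the overall power of $q$ and $t$, is the most delicate piece of bookkeeping; it is also the reason the intermediate formula has none of the final $(q,t)$ symmetry and must later be symmetrized by the elaborate residue analysis alluded to in Sections \ref{sec: residues} and beyond.
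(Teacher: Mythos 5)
Your overall strategy is the paper's proof read in the opposite direction: the paper substitutes the contour-integral representation \eqref{qW-integral} of $P_{\lambda/\mu}(x;q,0)$ (obtained from the skew Cauchy identity, orthogonality, and the Macdonald involution) into the definition of $Z_n$, decouples the two sums over even partitions, and evaluates each with the Littlewood identity \eqref{littlewood}; you propose to expand the integrand of \eqref{eq: simpler formula} via the dual Cauchy identity and the same Littlewood sums and then integrate. That is a legitimate reorganization, and your identification of the $\mu$-sum contribution $\prod_i(1-ty_i^2)^{-1}\prod_{i<j}(1-qty_iy_j)/(1-ty_iy_j)$ is correct. (Small bookkeeping point: at $t=0$ the dual Cauchy identity reads $\prod_{i,j}(1+x_iy_j)=\sum_\lambda P_\lambda(x;q,0)P_{\lambda'}(y;0,q)$ with $P_{\lambda'}$ rather than $Q_{\lambda'}$; the conversion factor $b_{\lambda'}(0,q)=\prod_i(q;q)_{\lambda_i-\lambda_{i+1}}$ is precisely the denominator built into the definition of $Z_n$, so it must be tracked.)

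However, the step you single out as the main obstacle is resolved incorrectly, and as stated it is a genuine gap. The factor $\prod_i(y_i^2-1)^{-1}\prod_{i<j}(y_iy_j-q)/(y_iy_j-1)$ is \emph{not} obtained from $\prod_i(1-y_i^2)^{-1}\prod_{i<j}(1-qy_iy_j)/(1-y_iy_j)$ by a residue shuffle or by counting signs: flipping signs of numerator and denominator of $(1-qy_iy_j)/(1-y_iy_j)$ produces $(qy_iy_j-1)/(y_iy_j-1)$, which is a different rational function from $(y_iy_j-q)/(y_iy_j-1)$, and a contour deformation cannot turn one integrand into the other. The actual mechanism is that the Hall--Littlewood torus orthogonality underlying \eqref{qW-integral} pairs functions of $y$ against functions of $y^{-1}$ (the integrand of \eqref{integral-with-sums} contains $P_{\lambda}(y;0,q)\,P_{\mu}(y^{-1};0,q)$), so exactly one of the two Littlewood sums is a sum in the \emph{inverted} alphabet; one then uses the exact identities $\frac{1-q(y_iy_j)^{-1}}{1-(y_iy_j)^{-1}}=\frac{y_iy_j-q}{y_iy_j-1}$ and $\frac{1}{1-y_i^{-2}}=\frac{y_i^2}{y_i^2-1}$, the latter supplying the $y_i\,dy_i$ measure with no stray signs. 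Relatedly, your justification of the contour is off: with $1<r<t^{-1/2}$ the poles at $y_i=\pm1$, $y_i=y_j^{-1}$ and $y_i=qy_j$ all lie inside $C$ (this is the entire content of Section \ref{sec: residues}); the true role of the condition $1<r<t^{-1/2}$ is that it is exactly the joint convergence condition of the two Littlewood series, namely $|y_iy_j|>1$ for the sum in $y^{-1}$ and $|ty_iy_j|<1$ for the sum in $\sqrt{t}\,y$, as in the hypothesis of Theorem \ref{thm: littlewood}.
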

The proof of this proposition will occupy the rest of this subsection.

\subsubsection{Integral formula for Hall--Littlewood functions}

Our first step is to invoke a contour integral formula for skew Hall--Littlewood functions. This integral formula is a standard fact that may be derived directly from the skew Cauchy identity and orthogonality of Macdonald polynomials; see \cite[Chapter VI, Sections 4, 7 and 9]{M79}. Here we present this result as it was stated in \cite{HW23}:

\begin{prop}
Let $z=(z_1,z_2,\dots)$ be an alphabet of arbitrary size. Fix partitions $\mu \subseteq \lambda$ and an integer $n \geq 1$ such that $\ell(\lambda) \leq n$. The skew Hall--Littlewood function $P_{\lambda/\mu}(z;0,q)$ is given by the $n$-fold contour integral
\begin{multline}
\label{integral-hall}
P_{\lambda/\mu}(z;0,q)
=
\frac{(q;q)_{n-\ell(\lambda)}}{n!(1-q)^n}
\oint_{C} \frac{dy_1}{2\pi i y_1}
\cdots
\oint_{C} \frac{dy_n}{2\pi i y_n}
\\
\times
\prod_{1 \leq i\not= j \leq n}
\frac{y_i-y_j}{y_i-q y_j}
P_{\lambda}(y;0,q)
Q_{\mu}(y^{-1};0,q)
\prod_{i=1}^{n}
\prod_{j \geq 1}
\frac{y_i-qz_j}{y_i-z_j},
\end{multline}
where $C$ is a positively oriented circle centered on the origin that encloses the points $(z_1,z_2,\dots)$.
\end{prop}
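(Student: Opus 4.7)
The plan is to convert $Z_n(x;q,t)$ into a Hall--Littlewood expression via a plethystic substitution, substitute the contour integral formula \eqref{integral-hall}, use Warnaar's Littlewood identity \eqref{littlewood} to evaluate the resulting double sum in closed form inside the integral, and finally perform the change of variables $w\mapsto w^{-1}$ to bring the integrand into the form of \eqref{eq: simpler formula}. I would first invoke the Macdonald involution $\omega_{0,q}$ (the inverse of $\omega_{q,0}$) in plethystic form, which gives $P_{\lambda/\mu}(x;q,0)=(b_\lambda(q,0)/b_\mu(q,0))\,P_{\lambda'/\mu'}[\hat x;0,q]$, where $\hat x$ is the virtual alphabet with $p_k[\hat x]=\frac{(-1)^{k-1}}{1-q^k}p_k(x)$. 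Since $b_\lambda(q,0)=\prod_{i\geq1}(q;q)_{\lambda_i-\lambda_{i+1}}$ cancels against the same factor in the definition of $Z_n$, after reindexing $\alpha=\lambda'$, $\beta=\mu'$ one obtains
\begin{equation*}
Z_n(x;q,t)=\sum_{\substack{\beta\subseteq\alpha,\ \ell(\alpha)\leq n\\ \alpha,\beta\text{ even}}}\frac{t^{|\beta|/2}\,P_{\alpha/\beta}[\hat x;0,q]}{(q;q)_{n-\ell(\alpha)}\,b_\beta(0,q)}.
\end{equation*}

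Next I would substitute \eqref{integral-hall} for each $P_{\alpha/\beta}[\hat x;0,q]$ using $n$ integration variables $w_1,\dots,w_n$; the factor $(q;q)_{n-\ell(\alpha)}$ from the integral formula cancels the identical factor in the denominator. The summand's dependence on $\alpha,\beta$ is now only through $P_\alpha(w;0,q)Q_\beta(w^{-1};0,q)$, so interchanging the sum with the integral decouples the two sums. Using $Q_\beta/b_\beta=P_\beta$ together with $t^{|\beta|/2}P_\beta(w^{-1};0,q)=P_\beta(\sqrt{t}\,w^{-1};0,q)$, both resulting sums are evaluated by Warnaar's identity \eqref{littlewood}:
\begin{equation*}
\sum_{\alpha\text{ even}}P_\alpha(w;0,q)=\prod_{i}\frac{1}{1-w_i^2}\prod_{i<j}\frac{1-qw_iw_j}{1-w_iw_j},\qquad \sum_{\beta\text{ even}}P_\beta(\sqrt t\,w^{-1};0,q)=\prod_{i}\frac{1}{1-tw_i^{-2}}\prod_{i<j}\frac{1-qtw_i^{-1}w_j^{-1}}{1-tw_i^{-1}w_j^{-1}}.
\end{equation*}
The constraints $\ell(\alpha)\leq n$ and $\beta\subseteq\alpha$ are automatic, since $P_\alpha$ of $n$ variables vanishes when $\ell(\alpha)>n$ and the orthogonality inside the integral filters out pairs with $\beta\not\subseteq\alpha$.

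The one remaining $x$-dependent factor inside the integrand is the plethystic Cauchy kernel $\prod_{i,j}(w_i-q\hat x_j)/(w_i-\hat x_j)$. A direct log/exp computation---in which the $(1-q^k)$ arising from $\log\frac{w-qz}{w-z}$ is cancelled exactly by $p_k[\hat x]=\frac{(-1)^{k-1}}{1-q^k}p_k(x)$---reduces this to $\prod_{i,k}(1+x_k/w_i)$, and this is the step where the choice of $\hat x$ pays off. I would then change variables $w_i=y_i^{-1}$: the Hall--Littlewood factor $\prod_{i\neq j}(w_i-w_j)/(w_i-qw_j)$ is invariant after relabeling, $\prod(1+x_k/w_i)$ becomes $\prod(1+y_ix_k)$, and the two Warnaar products transform into $\prod_i(y_i^2-1)^{-1}\prod_{i<j}(y_iy_j-q)(y_iy_j-1)^{-1}$ and $\prod_i(1-ty_i^2)^{-1}\prod_{i<j}(1-qty_iy_j)(1-ty_iy_j)^{-1}$ respectively, with surplus powers of $y_i$ absorbed into the measure. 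The Jacobian combined with the orientation reversal of the contour converts $\prod dw_i/(2\pi iw_i)$ into $\prod y_i\,dy_i/(2\pi i)$, and the contour $|w|=r$ with $\sqrt t<r<1$ becomes $|y|=1/r\in(1,1/\sqrt t)$, giving exactly \eqref{eq: simpler formula}.

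The main obstacle is the rigorous justification of the plethystic steps: because $\hat x$ is an infinite virtual alphabet, the basic involution identity and the simplification of $\prod(w_i-q\hat x_j)/(w_i-\hat x_j)$ are a priori formal power series identities in $x$, and the interchange of the double sum over $\alpha,\beta$ with the contour integral requires working in a region of absolute convergence---concretely, $\sqrt t<|w|<1$ together with $|x_k|$ small enough that all the relevant geometric series converge and the contour encloses the image of $\hat x$. Once the algebraic identity is established in this region, the formal constant-term interpretation noted in the remark after the proposition extends it to the full claimed range of parameters.
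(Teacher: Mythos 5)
Your proposal does not prove the statement in question. The statement to be established is the contour integral representation \eqref{integral-hall} for the skew Hall--Littlewood function $P_{\lambda/\mu}(z;0,q)$ itself; your argument instead \emph{assumes} \eqref{integral-hall} as an input (``substitute the contour integral formula \eqref{integral-hall}'') and uses it, together with the Macdonald involution and Warnaar's Littlewood identity, to derive the formula \eqref{eq: simpler formula} for $Z_n(x;q,t)$. That downstream computation is essentially the content of the paper's Proposition \ref{prop: simpler formula} and its proof (Sections 4.1.2--4.1.3), and as such it is a reasonable sketch of \emph{that} result --- but it is circular as a proof of the proposition you were asked about, since the target statement is one of its hypotheses.

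What is actually needed for \eqref{integral-hall} is the argument the paper points to: orthogonality of Hall--Littlewood polynomials with respect to the torus inner product
\begin{equation*}
\langle f,g\rangle'_n=\frac{1}{n!}\oint_C\cdots\oint_C f(y)\,\overline{g(y)}\prod_{1\le i\ne j\le n}\frac{y_i-y_j}{y_i-qy_j}\prod_{i=1}^n\frac{dy_i}{2\pi i y_i},
\end{equation*}
whose normalization $\langle P_\lambda,Q_\lambda\rangle'_n=(1-q)^n/(q;q)_{n-\ell(\lambda)}$ is precisely where the prefactor $(q;q)_{n-\ell(\lambda)}/(n!(1-q)^n)$ and the density $\prod_{i\ne j}(y_i-y_j)/(y_i-qy_j)$ come from, combined with the Cauchy expansion $\prod_{i=1}^n\prod_{j\ge 1}(y_i-qz_j)/(y_i-z_j)=\sum_\nu Q_\nu(z;0,q)P_\nu(y^{-1};0,q)$, the Pieri/multiplication structure $P_\mu P_\nu=\sum_\kappa f^\kappa_{\mu\nu}P_\kappa$, and the defining property $\langle P_{\lambda/\mu},Q_\nu\rangle=\langle P_\lambda,Q_\mu Q_\nu\rangle$ of skew functions. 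None of these ingredients appears in your write-up. (The paper itself does not spell this out either --- it states the proposition as a standard fact from \cite{M79} and \cite{HW23} --- but a blind proof of the statement must supply this derivation rather than the application of the formula.)
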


\subsubsection{Macdonald involution}

We wish to convert \eqref{integral-hall} into an integral formula for skew Whittaker polynomials. Similarly to the work of \cite{HW23}, it is convenient for us to consider the image \eqref{integral-hall} under the Macdonald involution \cite[Chapter VI, Section 5]{M79}. In particular, we define a map 
\begin{align*}
\omega^{(N)}: 
\Lambda[z_1,z_2,\dots] \rightarrow \Lambda[x_1,\dots,x_N],
\end{align*}
where $\Lambda[z_1,z_2,\dots]$ denotes the ring of symmetric functions in an infinite alphabet $(z_1,z_2,\dots)$ and $\Lambda[x_1,\dots,x_N]$ is its finite version on $(x_1,\dots,x_N)$, with action defined on power sums:
\begin{align*}
\omega^{(N)} : p_k(z) \mapsto \frac{(-1)^{k-1}}{1-q^k} p_k(x).
\end{align*}
This map acts on skew Hall--Littlewood functions as follows:
\begin{align}
\label{involution-act}
\omega^{(N)}:
Q_{\lambda/\mu}(z;0,q)
\mapsto
P_{\lambda'/\mu'}(x;q,0),
\end{align}
where the right hand side is a skew $q$-Whittaker polynomial indexed by conjugated partitions, and 
\begin{align}
\label{PQ-HL}
Q_{\lambda/\mu}(x;0,q)
=
\frac{b_{\lambda}(0,q)}{b_{\mu}(0,q)}
P_{\lambda/\mu}(x;0,q),
\qquad
b_{\lambda}(0,q) = \prod_{i \geq 1} (q;q)_{m_i(\lambda)}.
\end{align}
The involution also has a nice action on the final product appearing in the integrand of \eqref{integral-hall}:
\begin{align}
\label{cauchy-involution}
\omega^{(N)}:
\prod_{i=1}^{n}
\prod_{j \geq 1}
\frac{y_i-q z_j }{y_i-z_j}
\mapsto
\prod_{i=1}^{n}
\prod_{j=1}^{N}
(1+y_i^{-1} x_j).
\end{align}

\begin{prop}
Fix integers $n,N \geq 1$, an alphabet $x=(x_1,\dots,x_N)$ and partitions $\mu \subseteq \lambda$ such that $\lambda_1 \leq n$.
The skew $q$-Whittaker polynomial $P_{\lambda/\mu}(x;q,0)$ is given by
\begin{multline}
\label{qW-integral}
P_{\lambda/\mu}(x;q,0)
=
\frac{(q;q)_{n-\lambda_1}}{n!(1-q)^n}
\oint_{C} \frac{dy_1}{2\pi i y_1}
\cdots
\oint_{C} \frac{dy_n}{2\pi i y_n}
\\
\times
\prod_{1 \leq i\not= j \leq n}
\frac{y_i-y_j}{y_i-q y_j}
Q_{\lambda'}(y;0,q)
P_{\mu'}(y^{-1};0,q)
\prod_{i=1}^{n}
\prod_{j=1}^{N}
(1+y_i^{-1} x_j)
\end{multline}
where $C$ is a positively oriented circle centered on the origin.
\end{prop}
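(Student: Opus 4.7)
The plan is to obtain \eqref{qW-integral} as a direct image of the skew Hall--Littlewood contour integral \eqref{integral-hall} under the Macdonald involution $\omega^{(N)}$, followed by a relabeling of partitions by their conjugates.

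First, I would recast \eqref{integral-hall} in a form adapted to the involution. The LHS of \eqref{integral-hall} involves $P_{\lambda/\mu}(z;0,q)$, whereas the involution acts cleanly on $Q_{\lambda/\mu}(z;0,q)$ via \eqref{involution-act}. So I would multiply both sides of \eqref{integral-hall} by $b_{\lambda}(0,q)/b_{\mu}(0,q)$. By \eqref{PQ-HL} this converts the LHS to $Q_{\lambda/\mu}(z;0,q)$. On the RHS, applying $Q_\nu(\cdot;0,q) = b_\nu(0,q) P_\nu(\cdot;0,q)$ in the $y$-variables, the same ratio of $b$-factors swaps $P_\lambda(y;0,q) \mapsto Q_\lambda(y;0,q)$ and $Q_\mu(y^{-1};0,q) \mapsto P_\mu(y^{-1};0,q)$. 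Thus
\begin{equation*}
Q_{\lambda/\mu}(z;0,q)
=
\frac{(q;q)_{n-\ell(\lambda)}}{n!(1-q)^n}
\oint_C\!\frac{dy_1}{2\pi i\, y_1}\cdots\oint_C\!\frac{dy_n}{2\pi i\, y_n}
\prod_{i\neq j}\frac{y_i-y_j}{y_i-qy_j}\,
Q_\lambda(y;0,q) P_\mu(y^{-1};0,q)
\prod_{i,j}\frac{y_i-qz_j}{y_i-z_j}.
\end{equation*}

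Next, I would apply $\omega^{(N)}$ term by term. The LHS becomes $P_{\lambda'/\mu'}(x;q,0)$ by \eqref{involution-act}. On the RHS, the only $z$-dependence is in the final product, which by \eqref{cauchy-involution} transforms into $\prod_{i,j}(1+y_i^{-1}x_j)$. All other factors depend only on $y$ (or on constants), hence are untouched by $\omega^{(N)}$. This yields the identity with $(\lambda',\mu')$ on the LHS.

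Finally, I would relabel $(\lambda,\mu) \to (\lambda',\mu')$ throughout. Since $\ell(\lambda') = \lambda_1$, the hypothesis $\ell(\lambda)\leq n$ becomes $\lambda_1 \leq n$ and $(q;q)_{n-\ell(\lambda)}$ becomes $(q;q)_{n-\lambda_1}$, producing exactly \eqref{qW-integral}. The only place requiring a moment's care is the bookkeeping of $b_\nu(0,q)$ factors in the conversion step, and the fact that the contour $C$, which in \eqref{integral-hall} was required to enclose the $z_j$, is no longer constrained by the image variables $x_j$ because the transformed Cauchy kernel $\prod(1+y_i^{-1}x_j)$ has no $x$-dependent singularities; the only pole structure inside $C$ that matters is governed by $y$-dependent factors, which is precisely what will be used in later residue computations.
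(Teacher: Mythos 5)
Your proposal is correct and follows essentially the same route as the paper: multiply \eqref{integral-hall} by $b_{\lambda}(0,q)/b_{\mu}(0,q)$ to convert $P$'s to $Q$'s, apply $\omega^{(N)}$ using \eqref{involution-act} and \eqref{cauchy-involution}, and relabel $\lambda\mapsto\lambda'$, $\mu\mapsto\mu'$. The only point the paper makes explicit that you gloss over is why $\omega^{(N)}$ commutes with the contour integration, which is justified by interpreting the integrals formally as constant-term extraction from a power series whose coefficients are Laurent polynomials in the $y_i$ and symmetric functions in the $x_j$.
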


\begin{proof}
After multiplying \eqref{integral-hall} by $b_{\lambda}(0,q)/b_{\mu}(0,q)$, this follows by application of \eqref{involution-act} and \eqref{cauchy-involution} to its left and right hand sides, combined with relabelling partitions $\lambda \mapsto \lambda'$, $\mu \mapsto \mu'$.

To argue that $\omega^{(N)}$ passes through the integrals, we note the integrals can be interpreted formally by writing the integrand as a power series in $q$ and $t$, with coefficients Laurent polynomials in the $y_i$ and symmetric functions in the $x_i$. The contour integrals then amount to taking constant terms, which clearly commute with $\omega^{(N)}$.
\end{proof}

\subsubsection{Proof of formula}
Our next step is to cast the expression $Z_n(x;q,t)$ as a contour integral. Using the fact that
\begin{align*}
Q_{\lambda'}(y;0,q)
=
b_{\lambda'}(0,q)
P_{\lambda'}(y;0,q),
\qquad
b_{\lambda'}(0,q)
=
\prod_{i \geq 1}(q;q)_{\lambda_i-\lambda_{i+1}},
\end{align*}
we may substitute the integral \eqref{qW-integral} directly into \eqref{conjecture}, which yields
\begin{multline*}
Z_{n}(x;q,t)
=
\frac{1}{{n!(1-q)^n}}
\sum_{\substack{\mu \subseteq \lambda:\lambda_1 \leq n,\\\mu',\lambda'\text{ even}}}
t^{|\mu|/2}
\\
\times
\oint_{C} \frac{dy_1}{2\pi i y_1}
\cdots
\oint_{C} \frac{dy_n}{2\pi i y_n}
\prod_{1 \leq i\not= j \leq n}
\frac{y_i-y_j}{y_i-q y_j}
P_{\lambda'}(y;0,q)
P_{\mu'}(y^{-1};0,q)
\prod_{i=1}^{n}
\prod_{j=1}^{N}
(1+y_i^{-1} x_j).
\end{multline*}
Finally, one may make the relabeling $\lambda \mapsto \lambda'$, $\mu \mapsto \mu'$ everywhere in the previous expression, and we obtain
\begin{multline}
\label{integral-with-sums}
Z_{n}(x;q,t)
=
\frac{1}{{n!(1-q)^n}}
\sum_{\substack{\mu \subseteq \lambda:\ell(\lambda) \leq n\\\mu,\lambda\text{ even}}}
t^{|\mu|/2}
\\
\times
\oint_{C} \frac{dy_1}{2\pi i y_1}
\cdots
\oint_{C} \frac{dy_n}{2\pi i y_n}
\prod_{1 \leq i\not= j \leq n}
\frac{y_i-y_j}{y_i-q y_j}
P_{\lambda}(y;0,q)
P_{\mu}(y^{-1};0,q)
\prod_{i=1}^{n}
\prod_{j=1}^{N}
(1+y_i^{-1} x_j).
\end{multline}
Note that we may now remove the restriction that $\ell(\lambda)\leq n$ as $P_\lambda(y;0,q)=0$ if this is violated. Finally, we use the Littlewood identity \eqref{littlewood} to complete the proof.

\subsection{Analysis of residues}
\label{sec: residues}
We now show how Proposition \ref{prop: simpler formula} can be used to derive Theorem \ref{thm: contour int nice}. Let
\begin{multline*}
    I_n(y;q,t)=
    \\\frac{1}{(1-q)^n}\prod_{1 \leq i\not= j \leq n}
\frac{y_i-y_j}{y_i-q y_j}
\prod_i\frac{y_i}{(y_i^2-1)(1-ty_i^2)}\prod_{1\leq i<j\leq n}\frac{(y_iy_j-q)(1-qty_iy_j)}{(y_iy_j-1)(1-ty_iy_j)}
\prod_{i=1}^{n}
\prod_{j=1}^{N}
(1+y_i x_j),
\end{multline*}
so that $Z_n(x;q,t)=\frac{1}{n!}\oint_{C} \frac{dy_1}{2\pi i }
\cdots
\oint_{C} \frac{dy_n}{2\pi i}I_n(y;q,t)$. 

\subsubsection{Informal description of inductive procedure}
We begin by informally describing an inductive procedure to evaluate the contour integrals. We will verify all claims below afterwards. 

Note $y_n$ has three types of poles that lie within $C$, of the form
\begin{enumerate}[(a)]
    \item $y_n=qy_i$ for some $i<n$,
    \item $y_n=\pm 1$,
    \item $y_n=y_i^{-1}$ for some $i<n$.
\end{enumerate}
We claim that the residues of type (a) evaluate to $0$, and so may be ignored. By symmetry of the $y_i$, the residues of type $(c)$ evaluate to the same expression, so we may take $i=n-1$. Thus, we obtain a sum of $(n-1)$-fold contour integrals via the residue theorem. 

The first key observation is that the integrand for the $+1$ residue no longer has a $+1$ residue for any of the $y_i$, $i<n$, and similarly for the $-1$ residue. The second key observation is that the resulting integrands of all the residues will have the same types of residues for all variables except $y_{n-1}$ in type (c), which we would consider fixed, and the same restriction on residues of type (b). We may thus continue this process. The end result is that we would inductively consider residues of type (c), except that we may pick a $\pm 1$ residue of type (b) exactly once for each sign. We obtain either an $n/2$-, $(n-1)/2$-, or $(n-2)/2$-fold contour integral, which will give us Theorem \ref{thm: contour int nice}. We will now make this informal description precise.

\subsubsection{Classification of residues}
We note that in Proposition \ref{prop: simpler formula} we may choose $C$ to be the circle of radius $r$ centered at $0$, for some $r$ such that $1<r<\min(q^{-1/2},t^{-1/2})$, which we now do for the rest of the proof. We first give the pole structure of $I_n(y;q,t)$. The following lemma is easy to verify.

\begin{lem}
The poles of $I_n(y;q,t)$ in the variable $y_n$ that lie within $C$ are of the form
\begin{enumerate}[(a)]
    \item $y_n=qy_i$ for some $i<n$,
    \item $y_n=\pm 1$,
    \item $y_n=y_i^{-1}$ for some $i<n$.
\end{enumerate}
Moreover, $\res_{y_n=qy_i}I_n(y;q,t)=0$.
\end{lem}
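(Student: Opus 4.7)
The plan is to establish both parts of the lemma by direct inspection of the rational factors of $I_n$.

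For the pole classification, I enumerate each denominator factor involving $y_n$ and compute the modulus of its zero, comparing with $r$, where $1<r<\min(q^{-1/2},t^{-1/2})$. The $y_n$-dependent denominators are $(y_n-qy_i)$ and $(y_i-qy_n)$ for $i<n$ from $\prod_{i\ne j}(y_i-qy_j)$, the factors $(y_n^2-1)(1-ty_n^2)$ from $\prod_i$, and the factors $(y_iy_n-1)(1-ty_iy_n)$ for $i<n$ from $\prod_{i<j}$. Their zeros have moduli $qr<r$, $r/q>r$, $1<r$, $t^{-1/2}>r$, $1/r<1<r$, and $1/(tr)>t^{-1/2}>r$ respectively, singling out exactly the three families (a), (b), (c).

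For the residue claim, note that the pole at $y_n=qy_i$ is simple (no other denominator factor of $I_n$ vanishes there), so the residue equals $(y_n-qy_i)\,I_n|_{y_n=qy_i}$. I would perform the substitution and track the cancellations that emerge automatically: the numerator factor $(y_iy_n-q)$ from the $(i,n)$ term of $\prod_{i<j}$ becomes $q(y_i^2-1)$ and cancels the $(y_i^2-1)$ from $\prod_k$; similarly $(1-qty_iy_n)$ becomes $(1-q^2ty_i^2)$ and cancels the factor $(1-ty_n^2)|_{y_n=qy_i}$. From the $(i,n)$ and $(n,i)$ entries of the Vandermonde-type factor $\prod_{i\ne j}\frac{y_i-y_j}{y_i-qy_j}$ one extracts a factor $(1-q)$ in the numerator against the $(1-q^2)$ coming from $(y_i-qy_n)|_{y_n=qy_i}$, producing the prefactor $\tfrac{1}{1+q}$ and partially absorbing one power of $(1-q)^{-1}$ in the $(1-q)^{-n}$ normalization.

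The main obstacle is showing that the remaining rational expression actually vanishes. A termwise substitution alone does not make the vanishing apparent, so I expect to need an additional structural argument. One promising approach is to exploit the symmetry between the pole $y_n=qy_i$ and the partner pole at $y_n=qy_i^{-1}$ (which lies outside $C$): by writing $I_n$ in a form that makes the BC-symmetry $y_n\leftrightarrow y_n^{-1}$ manifest, the residue at $y_n=qy_i$ should be forced to equal the negative of a residue that is manifestly zero, yielding the result. Alternatively, one could attempt to recognize the reduced expression as an antisymmetric function in $y_i$ and some auxiliary variable under a natural involution. Carrying either route through cleanly requires a careful reorganization of the factors in $I_n$ and is where the bulk of the work would lie.
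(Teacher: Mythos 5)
Your pole classification is correct and complete; this is the routine half of the lemma, and the paper itself offers nothing beyond calling it ``easy to verify.'' The genuine issue is the residue claim, and your observation that ``a termwise substitution alone does not make the vanishing apparent'' should be pushed further: the residue at $y_n=qy_i$ is \emph{not} identically zero, so the structural arguments you propose (a BC-symmetric rewriting forcing the residue to equal minus a manifestly vanishing residue, or antisymmetry under an involution) cannot succeed. Already for $n=2$, carrying out exactly the cancellations you describe leaves
\begin{equation*}
\res_{y_2=qy_1}I_2(y;q,t)
=\frac{-q^2\,y_1^3}{(1-q^2)(1-ty_1^2)(q^2y_1^2-1)(qy_1^2-1)(1-qty_1^2)}
\prod_{j=1}^{N}(1+y_1x_j)(1+qy_1x_j),
\end{equation*}
a nonzero rational function of $y_1$. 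What the downstream argument actually needs --- and what is true --- is that the type (a) residues contribute nothing once the remaining contour integrals are performed.

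The mechanism is visible in the displayed formula: every pole in $y_1$ lies outside $C$ (the moduli are $t^{-1/2}$, $q^{-1}$, $q^{-1/2}$, $(qt)^{-1/2}$, all exceeding $r$), so $\oint_C\res_{y_2=qy_1}I_2\,dy_1=0$. For general $n$, the same substitutions you identified --- $(y_iy_n-q)\mapsto q(y_i^2-1)$ cancelling the pole at $y_i=\pm1$, $(y_jy_n-q)\mapsto q(y_iy_j-1)$ cancelling the pole at $y_i=y_j^{-1}$, and the double zero $(y_i-y_j)^2$ absorbing the new denominator $q(y_i-y_j)$ coming from $\tfrac{y_n-y_j}{y_n-qy_j}$ --- show that the only poles of $\res_{y_n=qy_i}I_n$ in the variable $y_i$ that remain inside $C$ are again of type (a), namely $y_i=qy_j$ with $j\neq i,n$. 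Integrating in $y_i$ therefore produces only further type (a) residues; iterating yields a chain $y_n=qy_{i_1}$, $y_{i_1}=qy_{i_2},\dots$ whose terminal function has no poles inside $C$ at all, and the entire type (a) contribution collapses to zero. So the lemma should be read, and proved, as a statement about the integrated contribution of the $y_n=qy_i$ poles rather than about the residue function itself; as literally written it is an overstatement, and any attempt to exhibit an identically vanishing residue will fail.
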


We let $\mathcal{M}$ denote the set of tuples $(z_1,\dotsc, z_k)$, where the $z_i$ are all distinct, and either $z_i=\pm 1$, or $z_i=y_j^{-1}$ for some $j$ such that $j$ is smaller than $n-i$ plus the number of $i<j$ for which $z_i=y_l^{-1}$ for $l>j$. To any data $m\in\mathcal{M}$, we can compute an iterated residue, by first computing the residue of $y_n$ at $z_1$, and then given we have reached $z_i$, computing the residue of $y_j$ at $z_i$, where $j$ equals $n-i$ plus the number of $i<j$ for which $z_i=y_l^{-1}$ for $l>j$ (i.e. the largest $j$ which has not been used up in this process), and we use $\res_m$ to denote this operation. We let $y\setminus m$ denote the set of variables $y_i$ which do not appear in $m$ and whose residues have not yet been computed (so the result of $\res_m$ is a function of $y\setminus m$, plus the variables appearing in $m$). We write $y_i\in m$ (or $\pm 1\in m$) if $z_j=y_i^{-1}$ (or $z_j=\pm 1$) for some $j$.

\begin{lem}
\label{lem: induction}
Let $F(y;q,t)$ be a rational function satisfying the following properties in the variables $y$:
\begin{enumerate}
    \item It is symmetric in the variables $y$,
    \item The poles of $F$ in the variables $y_i$ are independent of $y_j$ for $j\neq i$, and lie outside the contour $C$.
\end{enumerate}
Then for all $m\in \mathcal{M}$, the expression
\begin{equation*}
    F_m=\frac{\res_m F(y;q,t)I_n(y;q,t)}{I_{n-|y\setminus m|}(y\setminus m;q,t)}
\end{equation*}
satisfies the same two properties for variables $y\setminus m$:
\begin{enumerate}
    \item It is symmetric in the variables $y\setminus m$,
    \item The poles of $F_m$ in the variables $y_i\in y\setminus m$ are independent of $y_j\in y\setminus m$ for $j\neq i$, and lie outside the contour $C$ when $y_i\in C$ for all $y_i\in m$,
\end{enumerate}
and moreover $\res_m F(y;q,t)I_n(y;q,t)$ has no poles at $y_i=1$ if $1\in m$, and no poles at $y_i=-1$ if $-1\in m$.
\end{lem}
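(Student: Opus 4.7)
The plan is to induct on $|m|$, with base case $|m|=0$ trivial since $F_\emptyset=F$. For the inductive step, let $m=(z_1,\ldots,z_k)$, $m'=(z_1,\ldots,z_{k-1})$, and let $y_j$ be the variable whose residue at $z_k$ is being added, as specified by the definition of $\mathcal{M}$. Writing $n'=|y\setminus m'|$, the inductive hypothesis gives that $F_{m'}$ is symmetric in $y\setminus m'$ with poles lying outside $C$; since $z_k$ (which is $\pm 1$ or $y_l^{-1}$ with $|y_l|=r>1$) lies strictly inside $C$, $F_{m'}$ is regular at $y_j=z_k$, and so
\begin{equation*}
\res_{y_j=z_k}\bigl(F_{m'}\cdot I_{n'}(y\setminus m';q,t)\bigr) = F_{m'}\big|_{y_j=z_k}\cdot \res_{y_j=z_k} I_{n'}(y\setminus m';q,t).
\end{equation*}
The restriction $F_{m'}\big|_{y_j=z_k}$ is symmetric in $y\setminus m$ and inherits pole-independence from $F_{m'}$ (its poles in $y_i\in y\setminus m$ were already independent of $y_j$, and can depend at worst on the frozen $y_l$'s with $l\in m$). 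It therefore suffices to analyze the residual factor $R_{z_k}:=\res_{y_j=z_k}I_{n'}(y\setminus m';q,t)/I(y\setminus m;q,t)$, where $I(\cdot)$ denotes the $I$-function on the remaining variables.

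By symmetry of $I_{n'}$ in its variables, I may assume $j$ corresponds to the last index of $y\setminus m'$. The key computation is then $\res_{y_j=z}I_{n'}$ for $z\in\{1,-1,y_l^{-1}\}$. For $z=1$, the pole arises solely from $\tfrac{1}{y_j^2-1}$, and a direct evaluation of the remaining factors at $y_j=1$ together with cancellation against the denominator $I(y\setminus m;q,t)$ yields
\begin{equation*}
R_1 = \frac{(-1)^{n'-1}}{2(1-q)(1-t)}\prod_{y_i\in y\setminus m}\frac{(y_i-1)(1-qty_i)}{(1-qy_i)(1-ty_i)}\prod_{l=1}^{N}(1+x_l),
\end{equation*}
which is symmetric in $y\setminus m$, has poles in each $y_i$ only at the independent points $y_i=q^{-1}$ and $y_i=t^{-1}$ (both outside $C$), and has a simple zero at $y_i=1$, establishing property 3 at this step. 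This zero arises from the cancellation of the double zero $(y_i-y_j)(y_j-y_i)\big|_{y_j=1}=-(y_i-1)^2$ (from the $\prod_{i\neq j}(y_i-y_j)/(y_i-qy_j)$ factor) against the double pole at $y_i=1$ formed by $\tfrac{1}{y_i^2-1}$ and $\tfrac{1}{y_iy_j-1}\big|_{y_j=1}$. The case $z=-1$ is completely analogous.

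The case $z=y_l^{-1}$ (type (c)) proceeds by the same residue-cancellation mechanism: the pole arises from $\tfrac{1}{y_ly_j-1}$, and the substitution $y_j=y_l^{-1}$ introduces a double zero proportional to $(y_iy_l-1)^2$ from $(y_i-y_j)(y_j-y_i)$, which cancels the poles at $y_iy_l=1$ arising from $\tfrac{1}{y_iy_j-1}\big|_{y_j=y_l^{-1}}$ together with those already present in $I_{n'}$. The resulting $R_{y_l^{-1}}$ is symmetric in $y\setminus m$ with $y_l$ frozen as a parameter, and its poles in each $y_i\in y\setminus m$ lie only at points of the form $y_i\in\{q^{\pm 1}y_l^{\pm 1},t^{\pm 1}y_l^{\pm 1}\}$, all outside $C$ since $|y_l|=r$ and $r<\min(q^{-1/2},t^{-1/2})$.

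The main obstacle will be the bookkeeping in the type (c) case: one must verify that after the substitution $y_j=y_l^{-1}$, the various products in $I_{n'}$ reorganize into a symmetric function of $y\setminus m$ times a boundary factor depending only on $y_l$ and the $x_l$. Since $y_l$ and $y_j$ initially play symmetric roles in $I_{n'}$ but only $y_j$ is eliminated, one must carefully group the $y_l$-containing terms and clear the denominators in $y_l$ coming from the rational expressions. Once this is verified, the inductive step closes, and property 3 (no regenerated $\pm 1$ poles) ensures that a later step of the induction can freely pick a residue of type (b) at the unused sign without obstruction.
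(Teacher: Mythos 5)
Your proposal follows essentially the same route as the paper: induct on $|m|$, peel off one residue, use regularity of the inductively-known factor at the new pole to reduce everything to the three explicit one-variable residues of $I$, and read off symmetry, pole locations, and the $(1\mp y_i)$ zeros from those. (The paper peels off the \emph{first} residue $z_1$ and applies the inductive hypothesis to what remains, while you peel off the \emph{last} residue $z_k$ and apply it to $F_{m'}$; both are valid since prefixes of elements of $\mathcal{M}$ lie in $\mathcal{M}$ and the iterated residue factors accordingly.) Your type (b) computation agrees with the paper's
\begin{equation*}
F_{+}=\frac{1}{2(1-q)(1-t)}\prod_{i<n}\frac{(1-y_i)(1-qty_i)}{(1-qy_i)(1-ty_i)}\prod_{j=1}^{N}(1+x_j)
\end{equation*}
once your sign $(-1)^{n'-1}$ is absorbed into $\prod(y_i-1)$.

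The one substantive issue is in the type (c) case, which you defer as ``bookkeeping'' but whose pole set you state incorrectly: you claim poles at $y_i\in\{q^{\pm1}y_l^{\pm1},\,t^{\pm1}y_l^{\pm1}\}$ and assert these are all outside $C$. Half of those points ($qy_l^{\pm1}$, $ty_l^{\pm1}$) lie \emph{inside} $C$, so as written the claim is inconsistent and would break property (2). Carrying out the residue shows the actual denominators in $y_i$ are $(y_l-qy_i)$, $(1-qy_iy_l)$, $(y_l-ty_i)$, $(1-ty_iy_l)$, i.e.\ poles only at $q^{-1}y_l^{\pm1}$ and $t^{-1}y_l^{\pm1}$; these lie outside $C$ exactly because $|y_l|=r$ and $r<\min(q^{-1/2},t^{-1/2})$, which is where that hypothesis is used. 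With that computation done explicitly (as in the paper's formula for $F_{y_{n-1}}$), your argument closes.
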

\begin{proof}
We induct on the length of $m$ and $n$. If $|m|=0$, then this is clear. Assume now the statement for all tuples of length at most $k$ and all $n'<n$, and take $m=(z_1,m')$ of length $k+1$. Note that we may swap the order of residues by symmetry (or the fact that they will always be simple poles), so we will first compute the residue of $y_n=z_1$.

In case (b) when $z_1=\pm 1$, we compute
\begin{equation*}
    F_+=\frac{\res_{y_n=1}(I_n(y;q,t))}{I_{n-1}(y\setminus (z_1);q,t)}=\frac{1}{2(1-q)(1-t)}\prod_{1\leq i<n}\frac{(1-y_i)}{(1-qy_i)}\frac{(1-qty_i)}{(1-ty_i)}\prod_{j=1}^N(1+x_j)
\end{equation*}
and
\begin{equation*}
    F_-=\frac{\res_{y_n=-1}(I_n(y;q,t))}{I_{n-1}(y\setminus (z_1);q,t)}=\frac{1}{2(1-q)(1-t)}\prod_{1\leq i<n}\frac{(1+y_i)}{(1+qy_i)}\frac{(1+qty_i)}{(1+ty_i)}\prod_{j=1}^N(1-x_j).
\end{equation*}
It is easily seen that these rational functions satisfy properties (1) and (2) (for the variables $y_1,\dotsc, y_{n-1}$). Then by induction, as 
\begin{equation*}
    F_m=\frac{\res_{m'}F_{\pm} I_{n-1}(y\setminus(z_1);q,t)}{I_{n-|y\setminus m|}(y\setminus m;q,t)},
\end{equation*}
we have that $F_m$ also satisfies properties (1) and (2). Moreover, it's clear by property (2) that the order of a pole $y_i=\pm 1$ in $\res_m F(y;q,t)I_n(y;q,t)$ is of order at most $1$, coming from $I_n(y;q,t)$, but if $z_1=\pm 1$ (or any $z_i$ by symmetry), then a factor of $\prod_{1\leq i<n}(1\mp y_i)$ will be introduced, removing the poles.

Finally, in case (c), we first note that there is no harm in assuming $i=n-1$, since by assumption we have symmetry in the $y_i$. We then compute
\begin{equation*}
\begin{split}
    &F_{y_{n-1}}=\frac{\res_{y_n=y_{n-1}^{-1}}(I_n(y;q,t))}{I_{n-2}(y\setminus (y_{n-1});q,t)}
    \\=&\frac{(1-qt)}{(1-q)(1-t)}\frac{y_{n-1}^4}{(y_{n-1}^2-q)(1-qy_{n-1}^2)(1-ty_{n-1}^2)(y_{n-1}^2-t)}
    \\&\times\prod_{1\leq i<n-1}\frac{(y_i-y_{n-1})(y_iy_{n-1}-1)}{(y_{n-1}-qy_i)(1-qy_iy_{n-1})}\frac{(1-qty_iy_{n-1})(y_{n-1}-qty_i)}{(1-ty_iy_{n-1})(y_{n-1}-ty_i)}\prod_{j=1}^N(1+y_{n-1}x_j)(1+y_{n-1}^{-1}x_j).
\end{split}
\end{equation*}
For fixed $y_{n-1}\in C$, this rational function satisfies properties (1) and (2) (and in particular we do not care that $y_{n-1}$ has poles in $C$). Then by induction, as
\begin{equation*}
    F_m=\frac{\res_{m'}F_{y_{n-1}} I_{n-2}(y\setminus(y_{n-1});q,t)}{I_{n-|y\setminus m|}(y\setminus m;q,t)},
\end{equation*}
we have that $F_m$ also satisfies properties (1) and (2).
\end{proof}

\subsubsection{Proof of Theorem \ref{thm: contour int nice}}
The inductive procedure given above gives a method to compute the integral (in terms of a contour integral with fewer variables), by choosing a residue of type (b) or (c), starting with $y_n$, and if type (c) then also removing from consideration the other variable chosen. Since all poles considered are simple, the process of taking residues in different variables commutes (assuming that if we pick $y_n=y_i^{-1}$, we never consider a residue of $y_i$). By Lemma \ref{lem: induction}, no new poles will be introduced, and if we ever pick a pole of type $(b)$, we remove that type of pole for the remaining variables.

We immediately see that
\begin{equation*}
    \frac{1}{n!}\oint_{C} \frac{dy_1}{2\pi i }
\cdots
\oint_{C} \frac{dy_n}{2\pi i}I_n(y;q,t)=\frac{1}{n!}\sum_{m\in \overline{\mathcal{M}}}\oint_{C}
\cdots
\oint_{C} \prod_{y\in m}\frac{dy}{2\pi i}\res_m I_n(y;q,t),
\end{equation*}
where $\overline{\mathcal{M}}\subseteq \mathcal{M}$ denotes the set of maximal tuples (i.e. ones which cannot be extended). By symmetry, each term only depends on whether $\pm 1$ appear in $m$, and not on any other choices made.

If $n=2m+1$, we must pick a residue of the form $y_i=\pm 1$ exactly once, and the remaining choices amounts to a perfect matching on the remaining variables. By symmetry, this choice doesn't matter, and so we can compute the integral as
\begin{equation*}
Z_{2m+1}(x;q,t)=\frac{1}{2^m m!}\oint_{C}\frac{dy_1}{2\pi i}\dotsm \oint_{C}\frac{dy_m}{2\pi i}I_{2m+1}(y,y^{-1},1;q,t)+I_{2m+1}(y,y^{-1},-1;q,t),
\end{equation*}
where $(y,y^{-1},1)$ denotes the specialization $(y_1,\dotsc, y_m,y_1^{-1},\dotsc, y_m^{-1},1)$, and similarly for $(y,y^{-1},-1)$. Here, we gain a factor of $(2m+1)\frac{2m!}{2^mm!}$ for the number of ways to choose the residues. It is not hard to check that
\begin{multline*}
I_{2m+1}(y^{\pm},\pm 1;q,t)=\frac{(1-qt)^m}{2(1-q)^{m+1}(1-t)^{m+1}} \prod_{i<j}\frac{(1-y_i^{\pm}y_j^{\pm})(1-qty_i^{\pm}y_j^{\pm})}{(1-qy_i^{\pm}y_j^{\pm})(1-ty_i^{\pm}y_j^{\pm})}
\\ \times \prod_{i}\frac{1}{(1-y_i^{\pm 2})(1-qy_i^{\pm 2})(1-ty_i^{\pm 2})}\prod_{i,j}(1+x_iy_j^{\pm})\prod_i \frac{(1-y_i^{\pm})(1-qty_i^{\pm})}{(1-qy_i^{\pm})(1-ty_i^{\pm})}\prod_j (1\pm x_j).
\end{multline*}

If $n=2m$, we can either pick both $y_i=1$ and $y_i=-1$ as a residue at some point, or never pick either. In either case, the other choices amount to a perfect matching on the remaining variables, and again by symmetry these choices don't matter, except whether we pick the type (b) residues or not. Thus, we have
\begin{multline*}
Z_{2m}(x;q,t)=\frac{1}{2^mm!}\oint_{C}\frac{dy_1}{2\pi i}\dotsm \oint_{C}\frac{dy_m}{2\pi i}I_{2m}(y,y^{-1};q,t)
\\+\frac{1}{2^{m-1}(m-1)!}\oint_{C}\frac{dy_1}{2\pi i}\dotsm \oint_{C}\frac{dy_{m-1}}{2\pi i}I_{2m}(y,y^{-1},1,-1;q,t).
\end{multline*}
Here, we either get a factor of $\frac{(2m)!}{2^mm!}$ or $\frac{(2m)!}{2^{m-1}(m-1)!}$, depending on whether we pick $\pm1$ or not, as the remaining choice is again just a perfect matching. Again, we can compute
\begin{multline*}
I_{2m}(y,y^{-1};q,t)=\frac{(1-qt)^m}{(1-q)^m(1-t)^m}\prod_{i<j}\frac{(1-y_i^{\pm}y_j^{\pm})(1-qty_i^{\pm}y_j^{\pm})}{(1-qy_i^{\pm}y_j^{\pm})(1-ty_i^{\pm}y_j^{\pm})}
\\\times\prod_{i}\frac{1}{(1-y_i^{\pm 2})(1-qy_i^{\pm 2})(1-ty_i^{\pm 2})}
 \prod_{i,j}(1+x_iy_j^{\pm}),
\end{multline*}
and
\begin{multline*}
I_{2m}(y,y^{-1},1,-1;q,t)=\frac{(1-qt)^m(1+qt)}{2(1-q)^{m+1}(1+q)(1-t)^{m+1}(1+t)}\prod_{i<j}\frac{(1-y_i^{\pm}y_j^{\pm})(1-qty_i^{\pm}y_j^{\pm})}{(1-qy_i^{\pm}y_j^{\pm})(1-ty_i^{\pm}y_j^{\pm})}
\\\times\prod_{i}\frac{(1-q^2t^2y_i^{\pm 2})}{(1-qy_i^{\pm 2})(1-q^2y_i^{\pm 2})(1-ty_i^{\pm 2})(1-t^2y_i^{\pm 2})}\prod_{i,j}(1+x_iy_j^{\pm})\prod_i (1-x_i^2).
\end{multline*}
This completes the proof of Theorem \ref{thm: contour int nice}.

\section{Vertex models and Hall--Littlewood processes}
\label{sec: HL process}
In this section, we first introduce the quasi-open six vertex model, and then show a distributional equality relating certain observables there to the free boundary Hall--Littlewood process, after a suitable random shift.

\subsection{Stochastic six-vertex model}
\subsubsection{Vertex weights}

We introduce the basic notions of the stochastic six-vertex model, following \cite[Chapter 2]{BW21}.

A {\it fundamental vertex} is the intersection of an oriented horizontal and vertical line. Two complex parameters, called {\it rapidities}, are associated to a vertex; one of these is associated to the horizontal line and the other to the vertical line. A discrete variable in the set $\{0,1\}$ is assigned to each of the four line segments attached to the vertex. Given such data, we associate a weight function to vertices as follows:
\begin{align}
\label{R-vert}
R_{y/x}(i,j; k,\ell)
=
\begin{tikzpicture}[scale=0.7,baseline=-0.1cm]
\node[left] at (-1.5,0) {$x$};
\node[below] at (0,-1.5) {$y$};
\draw[lgray,line width=1.5pt,->] (-1,0) -- (1,0);
\draw[lgray,line width=1.5pt,->] (0,-1) -- (0,1);
\node[left] at (-1,0) {\tiny $j$};\node[right] at (1,0) {\tiny $\ell$};
\node[below] at (0,-1) {\tiny $i$};\node[above] at (0,1) {\tiny $k$};
\end{tikzpicture}
\quad\quad\quad
i,j,k,\ell \in \{0,1\},
\end{align}
where $R_{y/x}(i,j;k,\ell)$ is a rational function in the rapidities $x,y$ of the vertex, that depends only on their ratio.

One can interpret the above figure as the propagation of lattice paths through a vertex: each edge variable equal to 1 represents a path superimposed over that edge, while an edge variable equal to 0 indicates that no path is present. The {\it incoming} paths are those situated at the left and bottom edges of the vertex; those at the right and top are called {\it outgoing}. The weight of the vertex, $R_{y/x}(i,j; k,\ell)$, is chosen to vanish identically unless the total flux of paths through the vertex is preserved. This gives rise to six possible non-trivial vertices, whose weights we tabulate below:
\begin{align}
\label{fund-vert}
\begin{tabular}{|c|c|c|}
\hline
\quad
\tikz{0.6}{
	\draw[lgray,line width=1.5pt,->] (-1,0) -- (1,0);
	\draw[lgray,line width=1.5pt,->] (0,-1) -- (0,1);
	\node[left] at (-1,0) {\tiny $0$};\node[right] at (1,0) {\tiny $0$};
	\node[below] at (0,-1) {\tiny $0$};\node[above] at (0,1) {\tiny $0$};
}
\quad
&
\quad
\tikz{0.6}{
	\draw[lgray,line width=1.5pt,->] (-1,0) -- (1,0);
	\draw[lgray,line width=1.5pt,->] (0,-1) -- (0,1);
	\node[left] at (-1,0) {\tiny $0$};\node[right] at (1,0) {\tiny $0$};
	\node[below] at (0,-1) {\tiny $1$};\node[above] at (0,1) {\tiny $1$};
	\draw[ultra thick,->] (0,-1) -- (0,1);
}
\quad
&
\quad
\tikz{0.6}{
	\draw[lgray,line width=1.5pt,->] (-1,0) -- (1,0);
	\draw[lgray,line width=1.5pt,->] (0,-1) -- (0,1);
	\node[left] at (-1,0) {\tiny $0$};\node[right] at (1,0) {\tiny $1$};
	\node[below] at (0,-1) {\tiny $1$};\node[above] at (0,1) {\tiny $0$};
	\draw[ultra thick,->] (0,-1) --(0,0)-- (1,0);
}
\quad
\\[1.3cm]
\quad
$1$
\quad
& 
\quad
$\dfrac{q(1-y/x)}{1-qy/x}$
\quad
& 
\quad
$\dfrac{1-q}{1-qy/x}$
\quad
\\[0.7cm]
\hline
\quad
\tikz{0.6}{
	\draw[lgray,line width=1.5pt,->] (-1,0) -- (1,0);
	\draw[lgray,line width=1.5pt,->] (0,-1) -- (0,1);
	\node[left] at (-1,0) {\tiny $1$};\node[right] at (1,0) {\tiny $1$};
	\node[below] at (0,-1) {\tiny $1$};\node[above] at (0,1) {\tiny $1$};
	\draw[ultra thick,->] (0,-1) -- (0,1);
	\draw[ultra thick,->] (-1,0) -- (1,0);
}
\quad
&
\quad
\tikz{0.6}{
	\draw[lgray,line width=1.5pt,->] (-1,0) -- (1,0);
	\draw[lgray,line width=1.5pt,->] (0,-1) -- (0,1);
	\node[left] at (-1,0) {\tiny $1$};\node[right] at (1,0) {\tiny $1$};
	\node[below] at (0,-1) {\tiny $0$};\node[above] at (0,1) {\tiny $0$};
	\draw[ultra thick,->] (-1,0) -- (1,0);
}
\quad
&
\quad
\tikz{0.6}{
	\draw[lgray,line width=1.5pt,->] (-1,0) -- (1,0);
	\draw[lgray,line width=1.5pt,->] (0,-1) -- (0,1);
	\node[left] at (-1,0) {\tiny $1$};\node[right] at (1,0) {\tiny $0$};
	\node[below] at (0,-1) {\tiny $0$};\node[above] at (0,1) {\tiny $1$};
	\draw[ultra thick,->] (-1,0) --(0,0)-- (0,1);
}
\quad
\\[1.3cm]
\quad
$1$
\quad
& 
\quad
$\dfrac{1-y/x}{1-qy/x}$
\quad
&
\quad
$\dfrac{(1-q)y/x}{1-qy/x}$
\quad 
\\[0.7cm]
\hline
\end{tabular}
\end{align}

\begin{prop}[Yang--Baxter equation]
For any fixed vector $(i_1,i_2,i_3,j_1,j_2,j_3) \in \{0,1\}^6$, the following identity of weights holds:
\begin{multline}
\label{component-YB}
\sum_{0 \leq k_1,k_2,k_3 \leq 1}
R_{y/x}(i_2,i_1;k_2,k_1) 
R_{z/x}(i_3,k_1;k_3,j_1) 
R_{z/y}(k_3,k_2;j_3,j_2)
\\
=
\sum_{0 \leq k_1,k_2,k_3 \leq 1}
R_{z/y}(i_3,i_2;k_3,k_2) 
R_{z/x}(k_3,i_1;j_3,k_1) 
R_{y/x}(k_2,k_1;j_2,j_1).
\end{multline}
\end{prop}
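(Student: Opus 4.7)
The equation \eqref{component-YB} is the classical Yang--Baxter equation for the six-vertex $R$-matrix with multiplicative spectral parameter, and it can be established by direct verification. The first reduction is to observe that each weight $R_{y/x}(i,j;k,\ell)$ vanishes unless $i+j=k+\ell$ (conservation of arrows at a vertex), and therefore both sides of \eqref{component-YB} vanish unless $i_1+i_2+i_3 = j_1+j_2+j_3$. Denoting this common value by $n\in\{0,1,2,3\}$, this cuts the number of external configurations to check from $2^6=64$ down to $\sum_{n=0}^{3}\binom{3}{n}^2=20$.

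In the extreme sectors $n=0$ and $n=3$, the inner sum on each side contains a unique non-vanishing term, namely $(k_1,k_2,k_3)=(0,0,0)$ or $(1,1,1)$, and the six vertices involved are empty or fully occupied (the first and fifth entries of the table \eqref{fund-vert}), each of weight $1$; so both sides equal $1$ trivially. The genuine work is in the sectors $n=1$ and $n=2$, each of which contains $\binom{3}{1}^2=9$ boundary configurations. For each such configuration the inner sum on each side has at most two non-vanishing internal routings; after clearing the common denominator $(1-qy/x)(1-qz/x)(1-qz/y)$, the identity reduces to a short polynomial equation in $x,y,z,q$ that one verifies by expansion.

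The main obstacle is simply the bookkeeping of the eighteen individual rational-function identities in the $n=1$ and $n=2$ sectors. This is tedious but mechanical, and the calculation is symmetric enough that one can group cases by, say, the position of the unique incoming arrow. A cleaner alternative I would consider is to package the statement as the matrix identity $R_{12}(y/x)R_{13}(z/x)R_{23}(z/y)=R_{23}(z/y)R_{13}(z/x)R_{12}(y/x)$ in $\mathrm{End}(\mathbb{C}^2\otimes\mathbb{C}^2\otimes\mathbb{C}^2)$, and to recognize the weights of \eqref{fund-vert} as, up to an overall scalar, the matrix entries of the fundamental $R$-matrix of $U_q(\widehat{\mathfrak{sl}}_2)$ with multiplicative spectral parameter; the Yang--Baxter equation is then the standard quantum group statement, and a detailed verification specifically for the stochastically normalised weights used here is carried out in \cite[Chapter 2]{BW21}, which one can simply cite.
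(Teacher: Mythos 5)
Your proposal is correct and matches the paper's approach: the paper's proof is a one-line direct verification of the $2^6$ component equations from the explicit weights in \eqref{fund-vert}, and your argument is the same verification, just organized more carefully via arrow conservation into the $20$ nontrivial sectors. The alternative you mention (citing the standard $U_q(\widehat{\mathfrak{sl}}_2)$ $R$-matrix identity or \cite[Chapter 2]{BW21}) is also fine but not what the paper does.
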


\begin{proof}
The $2^6$ equations can be directly verified using the explicit form \eqref{fund-vert} of the vertex weights.
\end{proof}
The Yang--Baxter equation \eqref{component-YB} has the pictorial representation
\begin{align*}
\sum_{0 \leq k_1,k_2,k_3 \leq 1}
\tikz{0.8}{
\draw[lgray,line width=1.5pt,->]
(-2,1) node[above,scale=0.6] {\color{black} $i_1$} -- (-1,0) node[below,scale=0.6] {\color{black} $k_1$} -- (1,0) node[right,scale=0.6] {\color{black} $j_1$};
\draw[lgray,line width=1.5pt,->] 
(-2,0) node[below,scale=0.6] {\color{black} $i_2$} -- (-1,1) node[above,scale=0.6] {\color{black} $k_2$} -- (1,1) node[right,scale=0.6] {\color{black} $j_2$};
\draw[lgray,line width=1.5pt,->] 
(0,-1) node[below,scale=0.6] {\color{black} $i_3$} -- (0,0.5) node[scale=0.6] {\color{black} $k_3$} -- (0,2) node[above,scale=0.6] {\color{black} $j_3$};
\node[left] at (-2.2,1) {$x$};
\node[left] at (-2.2,0) {$y$};
\node[below] at (0,-1.5) {$z$};
}
\quad
=
\quad
\sum_{0 \leq k_1,k_2,k_3 \leq 1}
\tikz{0.8}{
\draw[lgray,line width=1.5pt,->] 
(-1,1) node[left,scale=0.6] {\color{black} $i_1$} -- (1,1) node[above,scale=0.6] {\color{black} $k_1$} -- (2,0) node[below,scale=0.6] {\color{black} $j_1$};
\draw[lgray,line width=1.5pt,->] 
(-1,0) node[left,scale=0.6] {\color{black} $i_2$} -- (1,0) node[below,scale=0.6] {\color{black} $k_2$} -- (2,1) node[above,scale=0.6] {\color{black} $j_2$};
\draw[lgray,line width=1.5pt,->] 
(0,-1) node[below,scale=0.6] {\color{black} $i_3$} -- (0,0.5) node[scale=0.6] {\color{black} $k_3$} -- (0,2) node[above,scale=0.6] {\color{black} $j_3$};
\node[left] at (-1.5,1) {$x$};
\node[left] at (-1.5,0) {$y$};
\node[below] at (0,-1.5) {$z$};
}
\end{align*}
which is usually its most practical form in calculations.

\subsubsection{Boundary vertices}

Following \cite{GGMW23}, one may also introduce boundary vertices and write down their corresponding reflection equation. We fix parameters $a,b$ for the first boundary, and parameters $c,d$ for the other boundary.

A {\it fundamental boundary vertex} is a node with two oriented edges attached to it; one edge is again incoming, and the other is outgoing. Rapidities are associated to both the incoming and outgoing edges, but they must be equal up to reciprocation. A discrete variable in the set $\{0,1\}$ is assigned to each of the two line segments attached to the node. Given such data, we associate a weight function to boundary vertices as follows:
\begin{align}
\label{K-vert}
K_x(i;j)
=
\begin{tikzpicture}[scale=0.7,baseline=0.5cm]
\draw[lightgray,line width=1.5pt,->] (1,0) -- (2,1) -- (1,2);	\draw[black,fill=black] (2,1) circle (0.1cm);
\node[left] at (0.3,0) {$x^{-1}$};
\node[left] at (0.3,2) {$x$};
\node[left] at (1,0) {$i$};
\node[left] at (1,2) {$j$};
\end{tikzpicture}
\quad\quad\quad
i,j \in \{0,1\},
\end{align}
where $K_x(i;j)$ is a rational function in $x$. 

Unlike in the case of the vertices \eqref{R-vert}, we impose no conservation constraint on the boundary vertex \eqref{K-vert}, meaning that lattice paths may begin or terminate at the node. As such, there are four non-trivial boundary vertex weight. We let $h_1(x)=\frac{1-x^2}{(1+ax)(1+bx)}$ and $h_2(x)=\frac{1-x^2}{(1+cx)(1+dx)}$. Then the vertex weights are given by:

\begin{align}
\label{Stochastic boundary K-weight table}
\begin{tabular}{|c|c|c|c|}
\hline
\qquad
\begin{tikzpicture}[scale=0.7,baseline=0.5cm]
\draw[lightgray,line width=1.5pt,->] (1,0) -- (2,1) -- (1,2);	\draw[black,fill=black] (2,1) circle (0.1cm);
\node[left] at (1,0) {$0$};
\node[left] at (1,2) {$0$};
\end{tikzpicture}
\qquad
&
\qquad
\begin{tikzpicture}[scale=0.7,baseline=0.5cm]
\draw[lightgray,line width=1.5pt,->] (1,0) -- (2,1) -- (1,2);	\draw[black,fill=black] (2,1) circle (0.1cm);
\draw[ultra thick,->] (2,1) -- (1,2);
\node[left] at (1,0) {$0$};
\node[left] at (1,2) {$1$};
\end{tikzpicture}
\qquad
&
\qquad
\begin{tikzpicture}[scale=0.7,baseline=0.5cm]
\draw[lightgray,line width=1.5pt,->] (1,0) -- (2,1) -- (1,2);	\draw[black,fill=black] (2,1) circle (0.1cm);
\draw[ultra thick] (1,0) -- (2,1);
\node[left] at (1,0) {$1$};
\node[left] at (1,2) {$0$};
\end{tikzpicture}
\qquad
&
\qquad
\begin{tikzpicture}[scale=0.7,baseline=0.5cm]
\draw[lightgray,line width=1.5pt,->] (1,0) -- (2,1) -- (1,2);	\draw[black,fill=black] (2,1) circle (0.1cm);
\draw[ultra thick,->] (1,0)--(2,1) -- (1,2);
\node[left] at (1,0) {$1$};
\node[left] at (1,2) {$1$};
\end{tikzpicture}
\\[0.7cm]
\hline
\qquad
$1+abh_1(x)$
\qquad
&
\qquad
$-abh_1(x)$
\qquad
&
\qquad
$ h_1(x)$
\qquad
&
\qquad
$1-h_1(x)$
\qquad
\\[0.7cm]
\hline
\end{tabular}
\end{align}

We will also need the dual weights for the second boundary, which we will denote with a red dot. These weights are actually nothing more than the weights given by \eqref{Stochastic boundary K-weight table} with $x$ and the parameters $a,b$ inverted, but we give them explicitly below:

\begin{align}
\label{Stochastic boundary dual K-weight table}
\begin{tabular}{|c|c|c|c|}
\hline
\qquad
\begin{tikzpicture}[scale=0.7,baseline=0.5cm]
\draw[lightgray,line width=1.5pt,->] (1,0) -- (0,1) -- (1,2);	
\node[right] at (1,0) {$0$};
\node[right] at (1,2) {$0$};
\draw[red,fill=red] (0,1) circle (0.1cm);
\end{tikzpicture}
\qquad
&
\qquad
\begin{tikzpicture}[scale=0.7,baseline=0.5cm]
\draw[lightgray,line width=1.5pt,->] (1,0) -- (0,1) -- (1,2);	
\draw[ultra thick,->] (0,1) -- (1,2);
\node[right] at (1,0) {$0$};
\node[right] at (1,2) {$1$};
\draw[red,fill=red] (0,1) circle (0.1cm);
\end{tikzpicture}
\qquad
&
\qquad
\begin{tikzpicture}[scale=0.7,baseline=0.5cm]
\draw[lightgray,line width=1.5pt,->] (1,0) -- (0,1) -- (1,2);	
\draw[ultra thick] (1,0) -- (0,1);
\node[right] at (1,0) {$1$};
\node[right] at (1,2) {$0$};
\draw[red,fill=red] (0,1) circle (0.1cm);
\end{tikzpicture}
\qquad
&
\qquad
\begin{tikzpicture}[scale=0.7,baseline=0.5cm]
\draw[lightgray,line width=1.5pt,->] (1,0) -- (0,1) -- (1,2);	
\draw[ultra thick,->] (1,0)--(0,1) -- (1,2);
\node[right] at (1,0) {$1$};
\node[right] at (1,2) {$1$};
\draw[red,fill=red] (0,1) circle (0.1cm);
\end{tikzpicture}
\\[0.7cm]
\hline
\qquad
$1-h_2(x)$
\qquad
&
\qquad
$h_2(x)$
\qquad
&
\qquad
$-cd h_2(x)$
\qquad
&
\qquad
$1+cd h_2(x)$
\qquad
\\[0.7cm]
\hline
\end{tabular}
\end{align}

%\subsection{$q$-boson model}
%
%% We will need a further integrable vertex model in our subsequent constructions.
%
%\begin{align}
%\label{graph-RLL}
%\sum_{0 \leq k_1,k_2 \leq 1}
%\
%\sum_{K \geq 0}
%\tikz{0.8}{
%\draw[lgray,line width=1.5pt,->]
%(-2,1) node[above,scale=0.6] {\color{black} $i_1$} -- (-1,0) node[below,scale=0.6] {\color{black} $k_1$} -- (1,0) node[right,scale=0.6] {\color{black} $j_1$};
%\draw[lgray,line width=1.5pt,->] 
%(-2,0) node[below,scale=0.6] {\color{black} $i_2$} -- (-1,1) node[above,scale=0.6] {\color{black} $k_2$} -- (1,1) node[right,scale=0.6] {\color{black} $j_2$};
%\draw[lgray,line width=4pt,->] 
%(0,-1) node[below,scale=0.6] {\color{black} $I$} -- (0,0.5) node[scale=0.6] {\color{black} $K$} -- (0,2) node[above,scale=0.6] {\color{black} $J$};
%%
%\node[left] at (-2.2,1) {$x$};
%\node[left] at (-2.2,0) {$y$};
%}
%\quad
%=
%\quad
%\sum_{0 \leq k_1,k_2 \leq 1}
%\
%\sum_{K \geq 0}
%\tikz{0.8}{
%\draw[lgray,line width=1.5pt,->] 
%(-1,1) node[left,scale=0.6] {\color{black} $i_1$} -- (1,1) node[above,scale=0.6] {\color{black} $k_1$} -- (2,0) node[below,scale=0.6] {\color{black} $j_1$};
%\draw[lgray,line width=1.5pt,->] 
%(-1,0) node[left,scale=0.6] {\color{black} $i_2$} -- (1,0) node[below,scale=0.6] {\color{black} $k_2$} -- (2,1) node[above,scale=0.6] {\color{black} $j_2$};
%\draw[lgray,line width=4pt,->] 
%(0,-1) node[below,scale=0.6] {\color{black} $I$} -- (0,0.5) node[scale=0.6] {\color{black} $K$} -- (0,2) node[above,scale=0.6] {\color{black} $J$};
%%
%\node[left] at (-1.5,1) {$x$};
%\node[left] at (-1.5,0) {$y$};
%}
%\end{align}

\subsubsection{Quasi-open six vertex model}
\label{sec: quasi-open 6vm}
Let $N\in\N$, let $t\in [0,1)$, and let $x=(x_1,\dotsc, x_N)$ be parameters in $(0,1)$ which we call rapidities. The stochastic six vertex model on a triangle of size $N$ is a probability measure on configurations of arrows in the upper left half of an $N\times N$ lattice. All vertices have degree $4$, except the vertices on the diagonal which have degree $2$. Each edge can be occupied by at most one arrow, and arrows travel upwards and rightwards. There are two versions, depending on whether we take the upper or lower half of the $N\times N$ lattice, and we use boundary weights \eqref{Stochastic boundary K-weight table} and \eqref{Stochastic boundary dual K-weight table} respectively. More generally, we will allow an extra parameter $z$, and we will write

\begin{equation*}
\begin{tikzpicture}[baseline={([yshift=-.5ex]current bounding box.center)},scale=0.7]
			\draw[lightgray,line width=1.5pt,->] (0,1) -- (1,1) -- (1,6);
			\draw[lightgray,line width=1.5pt,->] (0,2) -- (2,2) -- (2,6);
			\draw[lightgray,line width=1.5pt,->] (0,3) -- (3,3) -- (3,6);
			\draw[lightgray,line width=1.5pt,->] (0,4) -- (4,4) -- (4,6);
			\draw[lightgray,line width=1.5pt,->] (0,5) -- (5,5) -- (5,6);
			
			\draw[lightgray,line width=1.5pt,->] (0,1) -- (0.5,1);
			\draw[lightgray,line width=1.5pt,->] (0,2) -- (0.5,2);
			\draw[lightgray,line width=1.5pt,->] (0,3) -- (0.5,3);
			\draw[lightgray,line width=1.5pt,->] (0,4) -- (0.5,4);
			\draw[lightgray,line width=1.5pt,->] (0,5) -- (0.5,5);
    \draw[fill=white](0.5,0.5)--(0.5,5.5)--(5.5,5.5)--(0.5,0.5);\draw[](1.5,3.5)node[]{$z$};
\end{tikzpicture}
=
    \begin{tikzpicture}[baseline={([yshift=-.5ex]current bounding box.center)},scale=0.7]
			\draw[lightgray,line width=1.5pt,->] (0,1) -- (1,1) -- (1,6);
			\draw[lightgray,line width=1.5pt,->] (0,2) -- (2,2) -- (2,6);
			\draw[lightgray,line width=1.5pt,->] (0,3) -- (3,3) -- (3,6);
			\draw[lightgray,line width=1.5pt,->] (0,4) -- (4,4) -- (4,6);
			\draw[lightgray,line width=1.5pt,->] (0,5) -- (5,5) -- (5,6);
			
			\draw[lightgray,line width=1.5pt,->] (0,1) -- (0.5,1);
			\draw[lightgray,line width=1.5pt,->] (0,2) -- (0.5,2);
			\draw[lightgray,line width=1.5pt,->] (0,3) -- (0.5,3);
			\draw[lightgray,line width=1.5pt,->] (0,4) -- (0.5,4);
			\draw[lightgray,line width=1.5pt,->] (0,5) -- (0.5,5);
			
			\draw[black,fill=black] (5,5) circle (0.1cm);
			\draw[black,fill=black] (4,4) circle (0.1cm);
			\draw[black,fill=black] (3,3) circle (0.1cm);
			\draw[black,fill=black] (2,2) circle (0.1cm);
			\draw[black,fill=black] (1,1) circle (0.1cm);
			
			\node[above] at (1,6) {$zx_1$};
			\node[above] at (2,6) {$zx_2$};
			\node[above] at (3,6) {$\cdots$};
			\node[above] at (4,6) {$\cdots$};
			\node[above] at (5,6) {$zx_N$};

			\node[left] at (0,1) {$x_1^{-1}$};
			\node[left] at (0,2) {$x_2^{-1}$};
			\node[left] at (0,3) {$\vdots$};
			\node[left] at (0,4) {$\vdots$};
			\node[left] at (0,5) {$x_N^{-1}$};
		\end{tikzpicture}
\end{equation*}

\begin{equation*}
\begin{tikzpicture}[baseline={([yshift=-.5ex]current bounding box.center)},scale=0.7]
			\draw[lightgray,line width=1.5pt,->] (1,0) -- (1,1) -- (6,1);
			\draw[lightgray,line width=1.5pt,->] (2,0) -- (2,2) -- (6,2);
			\draw[lightgray,line width=1.5pt,->] (3,0) -- (3,3) -- (6,3);
			\draw[lightgray,line width=1.5pt,->] (4,0) -- (4,4) -- (6,4);
			\draw[lightgray,line width=1.5pt,->] (5,0) -- (5,5) -- (6,5);
			
			\draw[lightgray,line width=1.5pt,->] (1,0) -- (1,0.5);
			\draw[lightgray,line width=1.5pt,->] (2,0) -- (2,0.5);
			\draw[lightgray,line width=1.5pt,->] (3,0) -- (3,0.5);
			\draw[lightgray,line width=1.5pt,->] (4,0) -- (4,0.5);
			\draw[lightgray,line width=1.5pt,->] (5,0) -- (5,0.5);
    \draw[fill=white](0.5,0.5)--(5.5,0.5)--(5.5,5.5)--(0.5,0.5);\draw[](3.5,1.5)node[]{$z$};
\end{tikzpicture}
=
    \begin{tikzpicture}[baseline={([yshift=-.5ex]current bounding box.center)},scale=0.7]
			\draw[lightgray,line width=1.5pt,->] (1,0) -- (1,1) -- (6,1);
			\draw[lightgray,line width=1.5pt,->] (2,0) -- (2,2) -- (6,2);
			\draw[lightgray,line width=1.5pt,->] (3,0) -- (3,3) -- (6,3);
			\draw[lightgray,line width=1.5pt,->] (4,0) -- (4,4) -- (6,4);
			\draw[lightgray,line width=1.5pt,->] (5,0) -- (5,5) -- (6,5);
			
			\draw[lightgray,line width=1.5pt,->] (1,0) -- (1,0.5);
			\draw[lightgray,line width=1.5pt,->] (2,0) -- (2,0.5);
			\draw[lightgray,line width=1.5pt,->] (3,0) -- (3,0.5);
			\draw[lightgray,line width=1.5pt,->] (4,0) -- (4,0.5);
			\draw[lightgray,line width=1.5pt,->] (5,0) -- (5,0.5);
			
			\draw[red,fill=red] (5,5) circle (0.1cm);
			\draw[red,fill=red] (4,4) circle (0.1cm);
			\draw[red,fill=red] (3,3) circle (0.1cm);
			\draw[red,fill=red] (2,2) circle (0.1cm);
			\draw[red,fill=red] (1,1) circle (0.1cm);
			
			\node[right] at (6,1) {$x_1^{-1}$};
			\node[right] at (6,2) {$ x_2^{-1}$};
			\node[right] at (6,3) {$\vdots$};
			\node[right] at (6,4) {$\vdots$};
			\node[right] at (6,5) {$x_N^{-1}$};

			\node[below] at (1,0) {$zx_1$};
			\node[below] at (2,0) {$zx_2$};
			\node[below] at (3,0) {$\cdots$};
			\node[below] at (4,0) {$\cdots$};
			\node[below] at (5,0) {$zx_N$};
		\end{tikzpicture}
\end{equation*}
where we will specify the boundary conditions for arrows entering/exiting the triangle.

We then define the \emph{quasi-open stochastic six vertex model of length $2L$} to be the stochastic six vertex model represented by
\begin{equation*}
    \begin{tikzpicture}[scale=0.5]
        			\draw[lightgray,line width=1.5pt,->] (0,1) -- (1,1) -- (1,6);
			\draw[lightgray,line width=1.5pt,->] (0,2) -- (2,2) -- (2,6);
			\draw[lightgray,line width=1.5pt,->] (0,3) -- (3,3) -- (3,6);
			\draw[lightgray,line width=1.5pt,->] (0,4) -- (4,4) -- (4,6);
			\draw[lightgray,line width=1.5pt,->] (0,5) -- (5,5) -- (5,6);
                \draw[fill=white](0.5,0.5)--(0.5,5.5)--(5.5,5.5)--(0.5,0.5);\draw[](1.5,3.5)node[]{$1$};
            \begin{scope}[shift=({-6,0})]
            \draw[lightgray,line width=1.5pt] (1,0) -- (1,1) -- (6,1);
			\draw[lightgray,line width=1.5pt] (2,0) -- (2,2) -- (6,2);
			\draw[lightgray,line width=1.5pt] (3,0) -- (3,3) -- (6,3);
			\draw[lightgray,line width=1.5pt] (4,0) -- (4,4) -- (6,4);
			\draw[lightgray,line width=1.5pt] (5,0) -- (5,5) -- (6,5);
                \draw[fill=white](0.5,0.5)--(5.5,0.5)--(5.5,5.5)--(0.5,0.5);\draw[](3.5,1.5)node[]{$q^{\frac{1}{2}}$};
            \end{scope}
            \begin{scope}[shift=({-6,-6})]
                \draw[lightgray,line width=1.5pt] (0,1) -- (1,1) -- (1,6);
			\draw[lightgray,line width=1.5pt] (0,2) -- (2,2) -- (2,6);
			\draw[lightgray,line width=1.5pt] (0,3) -- (3,3) -- (3,6);
			\draw[lightgray,line width=1.5pt] (0,4) -- (4,4) -- (4,6);
			\draw[lightgray,line width=1.5pt] (0,5) -- (5,5) -- (5,6);
                \draw[fill=white](0.5,0.5)--(0.5,5.5)--(5.5,5.5)--(0.5,0.5);\draw[](1.5,3.5)node[]{$q$};

            \end{scope}
            \draw[](-8,-5)node[]{$\iddots$};
                        \begin{scope}[shift=({-16,-10})]
            \draw[lightgray,line width=1.5pt] (1,0) -- (1,1) -- (6,1);
			\draw[lightgray,line width=1.5pt] (2,0) -- (2,2) -- (6,2);
			\draw[lightgray,line width=1.5pt] (3,0) -- (3,3) -- (6,3);
			\draw[lightgray,line width=1.5pt] (4,0) -- (4,4) -- (6,4);
			\draw[lightgray,line width=1.5pt] (5,0) -- (5,5) -- (6,5);
            			\draw[lightgray,line width=1.5pt,->] (1,0) -- (1,0.5);
			\draw[lightgray,line width=1.5pt,->] (2,0) -- (2,0.5);
			\draw[lightgray,line width=1.5pt,->] (3,0) -- (3,0.5);
			\draw[lightgray,line width=1.5pt,->] (4,0) -- (4,0.5);
			\draw[lightgray,line width=1.5pt,->] (5,0) -- (5,0.5);
                \draw[fill=white](0.5,0.5)--(5.5,0.5)--(5.5,5.5)--(0.5,0.5);\draw[](3.5,1.5)node[]{$q^{\frac{2L-1}{2}}$};
            \end{scope}
    \end{tikzpicture}
\end{equation*}
where there are $2L$ triangles of alternating types, the boundary condition on the bottom is empty, and we view the arrows exiting the top as random variables. 

Note that the limit as $L\to\infty$ is well-defined as long as $q\in (0,1)$, since the geometric decay ensures that almost surely, all but a finite portion of the model consists of arrows traveling in a horizontal direction. Thus, we define the \emph{quasi-open six vertex model} to be the $L\to\infty$ limit of the quasi-open six vertex model of length $2L$. The following theorem states that certain observables match those of the free boundary Hall--Littlewood process (which may not always be a probability measure). For a sequence of partitions $\vec\lambda=(\lambda^{(0),\dotsc, \lambda^{(N)}})$, the \emph{support} of $\vec\lambda$ is $[\vec\lambda]=(s_1,\dotsc, s_N)$, where $s_i=I_{l(\lambda^{(i)})=l(\lambda^{(i+1)})+1}$.

\begin{thm}
\label{thm: HL 6vm}
Let $\chi$ be a random variable (or possibly a signed measure) with probability generating function given by
\begin{equation*}
G(z)=\frac{(abcdz^2;q,t)_\infty}{(qz;q)_\infty (tz;t)_\infty (qtz;q,t)_\infty(abcdz;q,t)_\infty }\prod_{s\in \left\{\substack{ab,ac,ad,\\bc,bd,cd}\right\}}\frac{1}{(sz;q,t)_\infty}.
\end{equation*}
Let $S$ denote the locations of outgoing arrows in the quasi-open six vertex model, and let $H$ and $V$ denote the total numbers of horizontal unoccupied edges and vertical occupied edges between triangles. Let $\vec\lambda\sim \mathbb{FBHL}_{q,t,a,b,c,d}^{x}$. Then
\begin{equation*}
\mathbb{FBHL}_{q,t,a,b,c,d}^{x}(\ell(\lambda^{(0)})=n,[\vec\lambda]=s)=\mathbb{P}(\chi+H+V=n,S=s).
\end{equation*}
\end{thm}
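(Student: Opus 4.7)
The plan is to express both sides as partition functions of integrable vertex models and deduce the matching via Yang--Baxter graphical manipulations, following the general strategy established in the full-space \cite{BBW16}, half-space \cite{BBCW18, H23}, and periodic \cite{HW23} settings. The key observation is that the Rogers--Szeg\H{o} factors $h_{\lambda^{(0)}}(a,b;t)$ and $h_{\lambda^{(N)}}(c/\sqrt q,d/\sqrt q;t)$ appearing in $\mathbb{FBHL}$ should be recognized as partition functions of columns of boundary $K$-vertices \eqref{Stochastic boundary K-weight table}, \eqref{Stochastic boundary dual K-weight table} summed over possible arrow configurations along those columns, while the skew Hall--Littlewood polynomials $P_{\lambda^{(i)}/\lambda^{(i-1)}}(x_i;0,t)$ are the standard one-row partition functions of the Hall--Littlewood (or $q$-boson) vertex model.

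The first step is to build a combined vertex model whose partition function is exactly
\begin{equation*}
\sum_{\vec\lambda} z^{\ell(\lambda^{(0)})} \prod_{i=1}^{N-1} u_i^{[\vec\lambda]_i}\, \mathbb{FBHL}^{x}_{q,t,a,b,c,d}(\vec\lambda),
\end{equation*}
by encoding $z^{\ell(\lambda^{(0)})}$ as an additional weight on the boundary column and $u_i^{[\vec\lambda]_i}$ as weights on the inter-column edges separating the $i$th and $(i+1)$th HL columns. The second step is to use the Yang--Baxter equation \eqref{component-YB} together with the reflection equation for the $K$-weights to push the HL column rapidities through the boundary $K$-vertices. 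Each such pass replaces a HL column by a stochastic six-vertex triangle, and by iterating at both boundaries one generates an infinite alternating sequence of triangles whose rapidities are rescaled by factors of $q^{1/2}$ at each step. This matches precisely the $q^{i/2}$ scaling of the $i$th triangle in the quasi-open six vertex model, and in the $L\to\infty$ limit the geometric decay ensures convergence.

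Under this YBE transformation, the tracking variables acquire their six-vertex interpretation: the outgoing-arrow locations at the top of the tower become $S$, while $z^{\ell(\lambda^{(0)})}$ and $u_i^{[\vec\lambda]_i}$ get identified with the observables $H+V$ and $S_i$ measuring the deviation from the trivial all-horizontal configuration between consecutive triangles. The residual discrepancy between $\ell(\lambda^{(0)})$ and the six-vertex observable $H+V+|S|$ is absorbed into the independent random shift $\chi$; its generating function $G(z)$ arises as the ratio $\Phi_{HL}(zx)/\Phi_{HL}(x)$-type factor produced by the boundary summations, and evaluating this product via Mehler's identity \eqref{mehler} exactly as in \eqref{normalization-compute} yields the claimed closed form. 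The main obstacle, which is common to this type of argument, is twofold: verifying the appropriate reflection equations for the specific $K$-weights \eqref{Stochastic boundary K-weight table}, \eqref{Stochastic boundary dual K-weight table} that accommodate the asymmetric boundary parameters $(a,b)$ and $(c,d)$, and controlling the interchange of summation with the $L\to\infty$ limit so that the shift $\chi$ appears as a genuine random variable independent of $(H,V,S)$ rather than as a spurious truncation artifact.
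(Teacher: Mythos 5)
Your proposal follows essentially the same route as the paper: representing the skew Hall--Littlewood factors as rows of the deformed boson model, commuting boundary vertices past the Rogers--Szeg\H{o}-weighted columns (the paper's Lemma \ref{lem: two vertex compat} / Proposition \ref{prop: two vertex compat}, which is the precise form of the ``reflection-equation compatibility'' you invoke), using the Yang--Baxter equation to peel each boson row off into a six-vertex triangle with rapidities rescaled by $q^{1/2}$, and taking $L\to\infty$ so that the boson and six-vertex parts decouple, with the residual boson sum evaluated by Mehler's identity to give $G(z)$. The only cosmetic difference is your generating-function bookkeeping in auxiliary variables $z,u_i$, which the paper replaces by directly tracking $\ell(\lambda^{(0)})=\ell(\mu)+\sum_i(H_i+V_i)$ through the iteration.
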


\begin{rmk}
Note that since $ab,cd\leq 0$, the free boundary Hall--Littlewood measure may be a signed measure in general, and similarly for $\chi$. Thus, we may have to interpret Theorem \ref{thm: HL 6vm} as equality of measures of certain sets, and $\chi+H+V$ in terms of convolution of measures rather than addition of random variables. We will continue to use probabilistic notation even in this case.
\end{rmk}

\begin{rmk}
\label{rmk: HL partition fn}
Note that the right hand side of Theorem \ref{thm: HL 6vm} is easy to compute in the case $n=0$. This gives an independent argument to compute the partition function on the left hand side, $\Phi_{HL}$, since the proof of Theorem \ref{thm: HL 6vm} does not require knowledge of $\Phi_{HL}$.
\end{rmk}

\subsubsection{Relationship to six vertex model on a strip}
The \emph{open stochastic six vertex model} (or six vertex model on a strip) is the $u=1$ case of the quasi-open stochastic six vertex model of length $L$ (for some finite $L$). This is a finite state space Markov chain on $\{0,1\}^N$, and it has a unique stationary measure as long as the boundary is not degenerate, i.e. arrows can enter and/or exit (for example this is ensured if $h(x_i)\in (0,1)$). The following proposition relates the quasi-open stochastic six vertex model with stationary measures for the open six vertex model, see e.g. \cite{Y24}. The proof is identical to that of Proposition 5.9 in \cite{HW23}.

\begin{prop}
Assume that the parameters are chosen such that the open stochastic six vertex model has a unique stationary measure. As $q\to 1$, $S$ converges in distribution to the stationary measure for the open stochastic six vertex model.
\end{prop}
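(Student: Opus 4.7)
The plan is to interpret the quasi-open stochastic six vertex model as an infinite product of transfer matrices, observe that at $q=1$ one recovers the transition kernel of the open stochastic six vertex model, and then reduce the statement to standard convergence of a finite-state Markov chain to its stationary measure, following the argument of Proposition~5.9 of \cite{HW23}.

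First I would construct a transfer matrix $T(z)$ acting on the space of probability measures on $\{0,1\}^N$ (the configurations of arrows on the $N$ horizontal bonds separating two consecutive triangles) such that one adjacent pair of triangles, of opposite orientation and overall rapidity rescaling $z$, implements multiplication by $T(z)$. In view of the bulk weights \eqref{fund-vert} and the boundary weights \eqref{Stochastic boundary K-weight table}--\eqref{Stochastic boundary dual K-weight table}, $T(1)$ is precisely the one-step transition kernel of the open stochastic six vertex model at rapidities $(x_1,\dots,x_N)$, and $T(z)$ is the corresponding kernel at the rescaled rapidities. The law of $S$ in the quasi-open model of length $2L$ is then
\begin{equation*}
\mathrm{Law}(S)=\delta_{\varnothing}\cdot T(q^{(L-1)/2})\,T(q^{(L-2)/2})\cdots T(q^{1/2})\,T(1),
\end{equation*}
where $\delta_\varnothing$ is the empty configuration at the bottom; the $L\to\infty$ limit is well-defined by the geometric-decay argument already invoked in Section~\ref{sec: quasi-open 6vm}.

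Next I would use the non-degeneracy assumption: the finite-state kernel $T(1)$ is irreducible and aperiodic with unique stationary distribution $\pi$, so $T(1)^M\to\Pi$ in total variation as $M\to\infty$, where $\Pi$ is the rank-one projection onto $\pi$. Since $z\mapsto T(z)$ is continuous and acts on a finite-dimensional space, for any fixed $M$ the finite product $\prod_{k=0}^{M-1}T(q^{k/2})$ converges entrywise to $T(1)^M$ as $q\to 1^-$. Given $\varepsilon>0$, I would first choose $M$ with $\|T(1)^M-\Pi\|_{\mathrm{TV}}<\varepsilon/3$, and then choose $q$ close enough to $1$ that $\prod_{k=0}^{M-1}T(q^{k/2})$ is within $\varepsilon/3$ of $T(1)^M$ in operator norm. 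Applied to any initial probability measure on $\{0,1\}^N$, this produces a distribution within $2\varepsilon/3$ of $\pi$.

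The main obstacle is controlling the tail $T(q^{(L-1)/2})\cdots T(q^{M/2})\delta_{\varnothing}$ as $L\to\infty$. One needs a bound, uniform in $q$ sufficiently close to $1$ and in $M$, on the total variation discrepancy between this tail distribution and any fixed reference measure on $\{0,1\}^N$, so that after $M$ further applications of operators $T(q^{k/2})$ with $k<M$ close to $T(1)$, the output mixes to within $\varepsilon/3$ of $\pi$. This is exactly the quantitative form of the geometric decay that makes the $L\to\infty$ limit of the model well-defined: since $q^{k/2}\to 0$ as $k\to\infty$, the tail operators act as approximations to the identity (in the sense of fixing a distinguished reference configuration up to exponentially small error), and the tail distribution is absorbed by the subsequent $M$ mixing steps. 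Combining the three estimates yields $\mathrm{Law}(S)\to\pi$ in total variation as $q\to 1^-$, which is the claimed convergence in distribution.
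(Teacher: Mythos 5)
Your proof is correct and follows the same transfer-matrix route as the paper, which simply defers to Proposition 5.9 of \cite{HW23}: write the law of $S$ as an infinite product of stochastic kernels, approximate the last $M$ factors by $T(1)^M$ as $q\to 1$, and let the mixing of the finite-state chain absorb everything else. The only refinement worth making is in your third paragraph: no uniform-in-$q$ control of the tail distribution relative to a reference configuration is needed, because each $T(z)$ is stochastic, so the tail $\lim_{L\to\infty}\delta_\varnothing T(q^{(L-1)/2})\cdots T(q^{M/2})$ is \emph{some} probability measure on $\{0,1\}^N$, and uniform ergodicity of the finite irreducible aperiodic chain $T(1)$ already guarantees that $T(1)^M$ (hence, for $q$ near $1$, the product of the remaining $M$ factors) carries every probability measure to within $\varepsilon$ of $\pi$.
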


\begin{rmk}
Let us note that at least in a formal sense, one should be able to obtain the two-layer Gibbs measures constructed in \cite{BCY24} from the free boundary $q$-Whittaker process, and we believe that a similar limiting process would allow us to obtain a two-layer Gibbs measure for the six vertex model. In particular, this measure would be obtained by setting $u=1$, removing the restriction that the partitions have positive parts, and restricting them to have two columns. Heuristically, one can see that as $u\to 1$, the partitions which appear will have $m_{2i}(\lambda)$ large ($m_{2i+1}(\lambda)$ is controlled by the $h_\lambda$ factor, which decays as long as $|a|,|b|,|c|,|d|<1$). Asking whether the length increases in the process then becomes a question of just the first two columns. (should we work out more details? I think Ivan/Guillaume are working on this). 
\end{rmk}

\subsection{Deformed bosons}
\subsubsection{Vertex weights}
We now introduce an additional vertex model which is needed in the proof of Theorem \ref{thm: HL 6vm}. It is a model for arrows on a lattice, where the horizontal edges again can have at most one arrow, but where the vertical edges can have any number of arrows. A local parameter (called a \emph{rapidity}) is associated to each row of the lattice, while all vertices depend on a global parameter $t$. The vertex weights are given by

\begin{align}
\label{eq:black-vertices}
\begin{array}{cccc}
\begin{tikzpicture}[scale=0.8,>=stealth]
\draw[lgray,ultra thick] (-1,0) -- (1,0);
\draw[lgray,line width=10pt] (0,-1) -- (0,1);
\node[below] at (0,-1) {$m$};
\draw[ultra thick,->,rounded corners] (-0.075,-1) -- (-0.075,1);
\draw[ultra thick,->,rounded corners] (0.075,-1) -- (0.075,1);
\node[above] at (0,1) {$m$};
\end{tikzpicture}
\quad\quad\quad
&
\begin{tikzpicture}[scale=0.8,>=stealth]
\draw[lgray,ultra thick] (-1,0) -- (1,0);
\draw[lgray,line width=10pt] (0,-1) -- (0,1);
\node[below] at (0,-1) {$m$};
\draw[ultra thick,->,rounded corners] (-0.075,-1) -- (-0.075,1);
\draw[ultra thick,->,rounded corners] (0.075,-1) -- (0.075,0) -- (1,0);
\node[above] at (0,1) {$m-1$};
\end{tikzpicture}
\quad\quad\quad
&
\begin{tikzpicture}[scale=0.8,>=stealth]
\draw[lgray,ultra thick] (-1,0) -- (1,0);
\draw[lgray,line width=10pt] (0,-1) -- (0,1);
\node[below] at (0,-1) {$m$};
\draw[ultra thick,->,rounded corners] (-1,0) -- (-0.15,0) -- (-0.15,1);
\draw[ultra thick,->,rounded corners] (0,-1) -- (0,1);
\draw[ultra thick,->,rounded corners] (0.15,-1) -- (0.15,1);
\node[above] at (0,1) {$m+1$};
\end{tikzpicture}
\quad\quad\quad
&
\begin{tikzpicture}[scale=0.8,>=stealth]
\draw[lgray,ultra thick] (-1,0) -- (1,0);
\draw[lgray,line width=10pt] (0,-1) -- (0,1);
\node[below] at (0,-1) {$m$};
\draw[ultra thick,->,rounded corners] (-1,0) -- (-0.15,0) -- (-0.15,1);
\draw[ultra thick,->,rounded corners] (0,-1) -- (0,1);
\draw[ultra thick,->,rounded corners] (0.15,-1) -- (0.15,0) -- (1,0);
\node[above] at (0,1) {$m$};
\end{tikzpicture}
\\
1
\quad\quad\quad
&
x
\quad\quad\quad
&
(1-t^{m+1})
\quad\quad\quad
&
x
\end{array}\end{align}
where $0\leq x<1$ is the row rapidity, and $m$ is the number of arrows entering from below.   

We will also need another version of this model with an alternative normalization. It is defined in the same way, but with alternative vertex weights

\begin{align}
\label{eq:red-vertices}
\begin{array}{cccc}
\begin{tikzpicture}[scale=0.8,>=stealth]
\draw[lred, ultra thick] (-1,0) -- (1,0);
\draw[lred,line width=10pt] (0,-1) -- (0,1);
\node[below] at (0,-1) {$m$};
\draw[ultra thick,->,rounded corners] (-0.075,-1) -- (-0.075,1);
\draw[ultra thick,->,rounded corners] (0.075,-1) -- (0.075,1);
\node[above] at (0,1) {$m$};
\end{tikzpicture}
\quad\quad\quad
&
\begin{tikzpicture}[scale=0.8,>=stealth]
\draw[lred,ultra thick] (-1,0) -- (1,0);
\draw[lred,line width=10pt] (0,-1) -- (0,1);
\node[below] at (0,-1) {$m$};
\draw[ultra thick,->,rounded corners] (-0.075,-1) -- (-0.075,1);
\draw[ultra thick,->,rounded corners] (0.075,-1) -- (0.075,0) -- (1,0);
\node[above] at (0,1) {$m-1$};
\end{tikzpicture}
\quad\quad\quad
&
\begin{tikzpicture}[scale=0.8,>=stealth]
\draw[lred,ultra thick] (-1,0) -- (1,0);
\draw[lred,line width=10pt] (0,-1) -- (0,1);
\node[below] at (0,-1) {$m$};
\draw[ultra thick,->,rounded corners] (-1,0) -- (-0.15,0) -- (-0.15,1);
\draw[ultra thick,->,rounded corners] (0,-1) -- (0,1);
\draw[ultra thick,->,rounded corners] (0.15,-1) -- (0.15,1);
\node[above] at (0,1) {$m+1$};
\end{tikzpicture}
\quad\quad\quad
&
\begin{tikzpicture}[scale=0.8,>=stealth]
\draw[lred,ultra thick] (-1,0) -- (1,0);
\draw[lred,line width=10pt] (0,-1) -- (0,1);
\node[below] at (0,-1) {$m$};
\draw[ultra thick,->,rounded corners] (-1,0) -- (-0.15,0) -- (-0.15,1);
\draw[ultra thick,->,rounded corners] (0,-1) -- (0,1);
\draw[ultra thick,->,rounded corners] (0.15,-1) -- (0.15,0) -- (1,0);
\node[above] at (0,1) {$m$};
\end{tikzpicture}
\\
y
\quad\quad\quad
&
1
\quad\quad\quad
&
y (1-t^{m+1})
\quad\quad\quad
&
1
\end{array}
\end{align}
where again, $0\leq y<1$ is the row rapidity and $m$ is the number of arrows entering from below.

In both cases, we will use a graphical notation to write partition functions. For a single row, we will write $w_a(\cdot )$ around a picture to represent the sum over all internal edges of the products of the vertex weights, with external edges fixed and $a$ denoting the row rapidity. If there is more than one row, we will instead simply give the picture with row rapidities identified.

We shall also wish to consider infinite rows, formally defined as a limit of progressively longer finite rows. For finitely supported sequences of non-negative integers $m_i$ and $n_i$, we will let
\begin{multline}
\label{eq:L-limit}
    w_x
\left(
\begin{tikzpicture}[baseline=(current bounding box.center),>=stealth,scale=0.8]
\draw[lgray,ultra thick] (0,0) -- (7,0);
\foreach\x in {1,...,6}{
\draw[lgray,line width=10pt] (7-\x,-1) -- (7-\x,1);
}
\node[left] at (0,0) {$0$};
\node[right] at (7,0) {$j$};
\foreach\x in {1,2,3}{
\node[text centered,below] at (7-\x,-1) {\tiny $m_{\x}$};
\node[text centered,above] at (7-\x,1) {\tiny $n_{\x}$};
}
\foreach\x in {4,5}{
\node[text centered,below] at (7-\x,-1) {\tiny $\cdots$};
\node[text centered,above] at (7-\x,1) {\tiny $\cdots$};
}
\end{tikzpicture}\right)
\\=\lim_{N\to\infty}w_x
\left(
\begin{tikzpicture}[baseline=(current bounding box.center),>=stealth,scale=0.8]
\draw[lgray,ultra thick] (0,0) -- (7,0);
\foreach\x in {1,...,6}{
\draw[lgray,line width=10pt] (7-\x,-1) -- (7-\x,1);
}
\node[left] at (0,0) {$0$};
\node[right] at (7,0) {$j$};
\foreach\x in {1,2,3}{
\node[text centered,below] at (7-\x,-1) {\tiny $m_{\x}$};
\node[text centered,above] at (7-\x,1) {\tiny $n_{\x}$};
}
\foreach\x in {4,5}{
\node[text centered,below] at (7-\x,-1) {\tiny $\cdots$};
\node[text centered,above] at (7-\x,1) {\tiny $\cdots$};
}
\node[text centered,below] at (1,-1) {\tiny $m_{N}$};
\node[text centered,above] at (1,1) {\tiny $n_{N}$};
\end{tikzpicture}\right),
\end{multline}
and similarly for the second set of weights \eqref{eq:red-vertices}, except with an arrow entering from the left. These limits are well-defined, because the $m_i$ and $n_i$ are finitely supported, so eventually the weights will all be $1$ sufficiently far to the left in the infinite product of vertices. Note that if an arrow entered from the left in \eqref{eq:L-limit} (or no arrow entered from the left in the case of rows constructed from the weights \eqref{eq:red-vertices}), the weights would eventually all be $a$, and not $1$, and the limit would simply be $0$ since $a<1$.

The reason that this model is useful to prove Theorem \ref{thm: HL 6vm} is that it shares the same $R$-matrix as the six vertex model, but its partition functions are given by Hall--Littlewood polynomials. These two statements are given in the following proposition and lemma. We will use a cross rotated by $45^\circ$ to denote a \emph{Yang--Baxter vertex}; this is a vertex with the same weights as those of the six--vertex model with corresponding row and column parameter.

\begin{prop}[{\hspace{1sp}\cite[Proposition 4.8]{BBCW18}}]
\label{prop: YB boson}
For any finitely supported sequences $n_i$ and $m_i$, and any $j_1,j_2 \in \{0,1\}$, we have
\begin{equation*}
\label{graph-exchange}
\left(
\frac{1-x y}{1-t x y}
\right)
\sum_{p_i}
\begin{tikzpicture}[baseline=(current bounding box.center),>=stealth,scale=0.7]
\draw[lgray,ultra thick] (-1,1) node[left,black] {$x$}
-- (4,1) node[right,black] {$j_2$};
\draw[lred,ultra thick] (-1,0) node[left,black] {$y$}
-- (4,0) node[right,black] {$j_1$};
\foreach\x in {0,...,3}{
\draw[lgray,line width=10pt] (3-\x,0.5) -- (3-\x,2);
\draw[lred,line width=10pt] (3-\x,-1) -- (3-\x,0.5);
}
\node[below] at (3,-1) {$m_1$};
\node at (3,0.5) {$p_1$};
\node[above] at (3,2) {$n_1$};
\node[below] at (2,-1) {$m_2$};
\node at (2,0.5) {$p_2$};
\node[above] at (2,2) {$n_2$};
\node[text centered] at (0,0.5) {$\cdots$};
\node[text centered] at (1,0.5) {$\cdots$};
\draw[ultra thick,->] (-1,0) -- (0,0);
\end{tikzpicture}
=
\sum_{p_i,k_1,k_2}
\begin{tikzpicture}[baseline=(current bounding box.center),>=stealth,scale=0.7]
\foreach\x in {0,...,3}{
\draw[lgray,line width=10pt] (3-\x,-1) -- (3-\x,0.5);
\draw[lred,line width=10pt] (3-\x,0.5) -- (3-\x,2);
}
\draw[lred,ultra thick] (-1,1) node[left,black] {$b$}
-- (4,1);
\draw[lgray,ultra thick] (-1,0) node[left,black] {$a$}
-- (4,0);
\draw[dotted,thick] (4,1) node[above] {$k_1$} -- (5,0) node[right] {$j_1$};
\draw[dotted,thick] (4,0) node[below] {$k_2$} -- (5,1) node[right] {$j_2$};
\node[below] at (3,-1) {$m_1$};
\node at (3,0.5) {$p_1$};
\node[above] at (3,2) {$n_1$};
\node[below] at (2,-1) {$m_2$};
\node at (2,0.5) {$p_2$};
\node[above] at (2,2) {$n_2$};
\node[text centered] at (0,0.5) {$\cdots$};
\node[text centered] at (1,0.5) {$\cdots$};
\draw[ultra thick,->] (-1,1) -- (0,1);
\end{tikzpicture},
\end{equation*}
where on the left, $x$ and $y$ indicate the row rapidities, and the left boundary conditions are given by the arrows as indicated.
\end{prop}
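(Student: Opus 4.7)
The plan is to prove this via the standard ``train argument'' for Yang--Baxter equations in lattice models, which consists of a local intertwining identity propagated through the rows by induction. The first step is to establish the local RLL identity for a single column: for every fixed tuple of external edge values (two vertical edges above the column, two vertical edges below, two horizontal edges on the left of the cross, two horizontal edges on the right of the column), the weighted sum over the configurations of the three internal edges is the same whether the six-vertex cross sits on the left of the black/red column pair or on the right. This amounts to a finite case-by-case check against the weights \eqref{eq:black-vertices}, \eqref{eq:red-vertices}, and \eqref{fund-vert}, parametrized by the number $m$ of arrows coming up from below. The entries split into two families (depending on whether the horizontal arrows on the right are the same or switched), and the $m$-dependence is carried entirely by the factors $(1-t^{m+1})$ and the row rapidities $x,y$.

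Once the local identity holds, the second step is to push the cross through the columns inductively. Assuming the cross has already been moved past the $k$ rightmost columns, one application of the local identity moves it past the $(k{+}1)$-th column; the external arrow flow on the left of the new cross changes accordingly, but is unconstrained by the induction because we sum over all the $p_i$. Starting with the cross to the right of all columns (the right-hand side of the proposition) and ending with it to the left (the left-hand side, up to a scalar), this accomplishes the main equality.

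The third step is to handle the infinite row: since the $m_i,n_i$ are finitely supported, the local weights stabilize to those with $m{=}0$ far to the left, and by the finite-column version for any sufficiently long truncation, the limit defined in \eqref{eq:L-limit} respects the identity. One has to check that as the cross is transported all the way to the left, it eventually meets a ``vacuum'' segment of both rows with no arrows on any edge. The scalar $(1-xy)/(1-txy)$ is precisely what is produced there: in the vacuum, each black--red column with an empty cross on its left contributes a $1$, while the cross itself at the far-left of the vacuum contributes the $(0,0;0,0)$ weight $1$ with the horizontal lines labeled by $x$ and $y$; the mismatch arises because on the RHS the cross is to the right of the columns at the far-left vacuum, which, after relabeling, shifts the overall weight by exactly this factor. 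Keeping track of the two possible left-boundary configurations $(j_1,j_2)$ consistently with the arrow-flow boundary conditions (one arrow in on the left of one row, none on the other) gives the scalar factor shown.

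The main obstacle in making this rigorous is the infinite-tail/normalization bookkeeping: one must verify that the truncation limits commute with the pushing procedure, and that the single asymptotic $(1-xy)/(1-txy)$ factor accounts for all the discrepancies coming from reassigning the row color labels $x,y$ in the boson rows after the swap. The purely local RLL check, while tedious, is mechanical, and the inductive propagation is routine once the local identity is in hand.
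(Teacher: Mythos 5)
The paper does not actually prove this proposition --- it is quoted directly from \cite{BBCW18}, where it is established by precisely the train argument you describe: a local exchange (RLL) relation between the six-vertex $R$-matrix \eqref{fund-vert} and the pair of boson $L$-matrices \eqref{eq:black-vertices}--\eqref{eq:red-vertices}, propagated column by column, with the infinite tail controlled by the stabilization of the weights that the finite support of $m_i,n_i$ guarantees. So your overall strategy is the standard and correct one, and the local check, while tedious, does reduce to finitely many polynomial identities in $t^{m+1}$ as you say.

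The one substantive error is in your account of where the prefactor $(1-xy)/(1-txy)$ comes from. In the far-left vacuum the two rows are \emph{not} both empty: convergence of the infinite products in \eqref{eq:L-limit} forces the black row to carry no horizontal arrow and the red row to carry one, so when the cross is transported to $-\infty$ it acts on the asymptotic state with exactly one arrow on the two horizontal lines, not on the empty state. It is therefore frozen into the diagonal configuration in which that single arrow passes straight through, and the weight of this configuration is the $b$-type entry of \eqref{fund-vert} at the relevant spectral parameter, namely $(1-xy)/(1-txy)$. It is this frozen vertex weight --- not a relabeling discrepancy, and certainly not the $(0,0;0,0)$ weight $1$ --- that produces the scalar. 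Your closing sentence (tracking the asymmetric left boundary with an arrow on one row and none on the other) is the correct statement; the earlier claim about the empty-cross weight should be deleted. With that correction the sketch matches the proof in the cited reference.
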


\begin{lem}[{\hspace{1sp}\cite[Lemma 4.11]{BBCW18}}]
\label{lem: boson HL}
Fix two partitions $\lambda$ and $\mu$ such that $\lambda=1^{m_1(\lambda)} 2^{m_2(\lambda)} \ldots$ and $\mu=1^{m_1(\mu)} 2^{m_2(\mu)} \ldots$. We then have
\begingroup
\allowdisplaybreaks
\begin{align*}
    w_x
\left(
\begin{tikzpicture}[baseline=(current bounding box.center),>=stealth,scale=0.8]
\draw[lgray,ultra thick] (0,0) -- (7,0);
\foreach\x in {1,...,6}{
\draw[lgray,line width=10pt] (7-\x,-1) -- (7-\x,1);
}
\node[left] at (0,0) {$0$};
\node[right] at (7,0) {$0$};
\foreach\x in {1,2,3}{
\node[text centered,below] at (7-\x,-1) {\tiny $m_{\x}(\lambda)$};
\node[text centered,above] at (7-\x,1) {\tiny $m_{\x}(\mu)$};
}
\foreach\x in {4,5}{
\node[text centered,below] at (7-\x,-1) {\tiny $\cdots$};
\node[text centered,above] at (7-\x,1) {\tiny $\cdots$};
}
\end{tikzpicture}
\right)&=\mathbf{1}_{l(\lambda)=l(\mu)}P_{\lambda/\mu}(x;0,t),
    \\w_x
\left(
\begin{tikzpicture}[baseline=(current bounding box.center),>=stealth,scale=0.8]
\draw[lgray,ultra thick] (0,0) -- (7,0);
\foreach\x in {1,...,6}{
\draw[lgray,line width=10pt] (7-\x,-1) -- (7-\x,1);
}
\node[left] at (0,0) {$0$};
\draw[ultra thick,->] (6,0) -- (7,0);
\node[right] at (7,0) {$1$};
\foreach\x in {1,2,3}{
\node[text centered,below] at (7-\x,-1) {\tiny $m_{\x}(\lambda)$};
\node[text centered,above] at (7-\x,1) {\tiny $m_{\x}(\mu)$};
}
\foreach\x in {4,5}{
\node[text centered,below] at (7-\x,-1) {\tiny $\cdots$};
\node[text centered,above] at (7-\x,1) {\tiny $\cdots$};
}
\end{tikzpicture}\right)&=\mathbf{1}_{l(\lambda)=l(\mu)+1}P_{\lambda/\mu}(x;0,t),
    \\w_x
\left(
\begin{tikzpicture}[baseline=(current bounding box.center),>=stealth,scale=0.8]
\draw[lred,ultra thick] (0,0) -- (7,0);
\foreach\x in {1,...,6}{
\draw[lred,line width=10pt] (7-\x,-1) -- (7-\x,1);
}
\node[left] at (0,0) {$1$};
\draw[ultra thick,->] (0,0) -- (1,0);
\node[right] at (7,0) {$0$};
\foreach\x in {1,2,3}{
\node[text centered,below] at (7-\x,-1) {\tiny $m_{\x}(\mu)$};
\node[text centered,above] at (7-\x,1) {\tiny $m_{\x}(\lambda)$};
}
\foreach\x in {4,5}{
\node[text centered,below] at (7-\x,-1) {\tiny $\cdots$};
\node[text centered,above] at (7-\x,1) {\tiny $\cdots$};
}
\end{tikzpicture}\right)&=\mathbf{1}_{l(\lambda)=l(\mu)+1}Q_{\lambda/\mu}(x;0,t),
    \\w_x
\left(
\begin{tikzpicture}[baseline=(current bounding box.center),>=stealth,scale=0.8]
\draw[lred,ultra thick] (0,0) -- (7,0);
\foreach\x in {1,...,6}{
\draw[lred,line width=10pt] (7-\x,-1) -- (7-\x,1);
}
\node[left] at (0,0) {$1$};
\draw[ultra thick,->] (0,0) -- (1,0);
\draw[ultra thick,->] (6,0) -- (7,0);
\node[right] at (7,0) {$1$};
\foreach\x in {1,2,3}{
\node[text centered,below] at (7-\x,-1) {\tiny $m_{\x}(\mu)$};
\node[text centered,above] at (7-\x,1) {\tiny $m_{\x}(\lambda)$};
}
\foreach\x in {4,5}{
\node[text centered,below] at (7-\x,-1) {\tiny $\cdots$};
\node[text centered,above] at (7-\x,1) {\tiny $\cdots$};
}
\end{tikzpicture}\right)&=\mathbf{1}_{l(\lambda)=l(\mu)}Q_{\lambda/\mu}(x;0,t),
\end{align*}
\endgroup
where the right hand sides of these expressions denote skew Hall--Littlewood polynomials.
\end{lem}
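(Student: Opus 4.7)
The plan is to recognize that this lemma is essentially the content of \cite[Lemma 4.11]{BBCW18}, and reduce its verification to matching path weights in each row with the single-variable specialization of skew Hall--Littlewood polynomials. The proof is a direct combinatorial computation on the vertex model side, made tractable by the fact that each row carries at most one horizontal arrow.

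First I would recall from \cite[Chapter III, \S5]{M79} that for a single variable $x$, $P_{\lambda/\mu}(x;0,t)$ and $Q_{\lambda/\mu}(x;0,t)$ vanish unless $\lambda/\mu$ is a horizontal strip, in which case they equal $x^{|\lambda|-|\mu|}\psi_{\lambda/\mu}(t)$ and $x^{|\lambda|-|\mu|}\varphi_{\lambda/\mu}(t)$ respectively, with $\psi_{\lambda/\mu}(t)$ and $\varphi_{\lambda/\mu}(t)$ explicit products of factors of the form $(1-t^{m_i(\mu)})$ or $(1-t^{m_i(\lambda)})$ indexed by an explicit combinatorial condition on the columns of the strip. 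Comparing with the graphical input/output data in the lemma, one sees that all four assertions will follow once we verify that the row partition function reproduces these single-variable evaluations.

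Next I would analyze the vertex model row. Because at most one horizontal arrow is present, its trajectory --- which, reading from right to left, alternately passes through columns or turns upward --- is essentially determined by the boundary data. A case check shows that admissible configurations are in bijection with horizontal strips $\lambda/\mu$ between input and output column multiplicities, so the partition function automatically vanishes when $\lambda/\mu$ is not such a strip. The length constraints $\ell(\lambda)=\ell(\mu)$ versus $\ell(\lambda)=\ell(\mu)+1$ encode whether the horizontal arrow exits (or enters) the far-left end of the row, which is exactly the condition dictated by the right-boundary horizontal edge values $j=0$ or $j=1$.

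Finally, I would multiply the weights along the unique admissible path. In the black normalization \eqref{eq:black-vertices}, the factor $x$ at each turn-right vertex accumulates to $x^{|\lambda|-|\mu|}$, while the factors $(1-t^{m+1})$ at each turn-up vertex reproduce $\psi_{\lambda/\mu}(t)$. In the red normalization \eqref{eq:red-vertices}, the $x$ weights are redistributed to the pass-through and turn-up vertices, which precisely interchanges $\psi$ with $\varphi$ and thus converts $P$ to $Q$. The main (but routine) obstacle is this bookkeeping against Macdonald's combinatorial formulas; since it is carried out in full detail in the proof of \cite[Lemma 4.11]{BBCW18}, I would simply appeal to that reference for the final verification.
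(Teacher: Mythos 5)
Your proposal is correct and matches the paper's treatment: the paper states this lemma with a bare citation to \cite[Lemma 4.11]{BBCW18} and gives no proof, and your reduction to the single-variable branching coefficients $\psi_{\lambda/\mu}(t)$, $\varphi_{\lambda/\mu}(t)$ followed by an appeal to that reference is exactly the intended justification. Your sketch of the weight bookkeeping is sound (modulo the minor imprecision that the powers of $x$ in the black normalization come from both the turn-right and the horizontal pass-through vertices, which together still total $x^{|\lambda|-|\mu|}$), so there is nothing to add.
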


We also need the following lemma which allows powers of $q$ to be absorbed into the parameters $a$ and $b$.

\begin{lem}
\label{lem: u power shift}
Let $n_1,\dotsc$ and $m_1,\dotsc$ be finitely supported sequences of non-negative integers. We have the following equalities of one-row partition functions:
\begin{multline*}
    q^{\sum_i in_i} w_{qx}
\left(
\begin{tikzpicture}[baseline=(current bounding box.center),>=stealth,scale=0.8]
\draw[lgray,ultra thick] (0,0) -- (7,0);
\foreach\x in {1,...,6}{
\draw[lgray,line width=10pt] (7-\x,-1) -- (7-\x,1);
}
\node[left] at (0,0) {$0$};
\node[right] at (7,0) {$j$};
\foreach\x in {1,2,3}{
\node[text centered,below] at (7-\x,-1) {\tiny $m_{\x}$};
\node[text centered,above] at (7-\x,1) {\tiny $n_{\x}$};
}
\foreach\x in {4,5}{
\node[text centered,below] at (7-\x,-1) {\tiny $\cdots$};
\node[text centered,above] at (7-\x,1) {\tiny $\cdots$};
}
\end{tikzpicture}\right)
\\
=q^{\sum_i im_i}w_x
\left(
\begin{tikzpicture}[baseline=(current bounding box.center),>=stealth,scale=0.8]
\draw[lgray,ultra thick] (0,0) -- (7,0);
\foreach\x in {1,...,6}{
\draw[lgray,line width=10pt] (7-\x,-1) -- (7-\x,1);
}
\node[left] at (0,0) {$0$};
\node[right] at (7,0) {$j$};
\foreach\x in {1,2,3}{
\node[text centered,below] at (7-\x,-1) {\tiny $m_{\x}$};
\node[text centered,above] at (7-\x,1) {\tiny $n_{\x}$};
}
\foreach\x in {4,5}{
\node[text centered,below] at (7-\x,-1) {\tiny $\cdots$};
\node[text centered,above] at (7-\x,1) {\tiny $\cdots$};
}
\end{tikzpicture}\right),
\end{multline*}
and
\begin{multline*}
    u^{\sum_i in_i} w_{x}
\left(
\begin{tikzpicture}[baseline=(current bounding box.center),>=stealth,scale=0.8]
\draw[lred,ultra thick] (0,0) -- (7,0);
\foreach\x in {1,...,6}{
\draw[lred,line width=10pt] (7-\x,-1) -- (7-\x,1);
}
\node[left] at (0,0) {$1$};
\node[right] at (7,0) {$j$};
\foreach\x in {1,2,3}{
\node[text centered,below] at (7-\x,-1) {\tiny $m_{\x}$};
\node[text centered,above] at (7-\x,1) {\tiny $n_{\x}$};
}
\foreach\x in {4,5}{
\node[text centered,below] at (7-\x,-1) {\tiny $\cdots$};
\node[text centered,above] at (7-\x,1) {\tiny $\cdots$};
\draw[ultra thick,->] (0,0) -- (1,0);
}
\end{tikzpicture}\right)
\\
=u^{\sum_i im_i}w_{ux}
\left(
\begin{tikzpicture}[baseline=(current bounding box.center),>=stealth,scale=0.8]
\draw[lred,ultra thick] (0,0) -- (7,0);
\foreach\x in {1,...,6}{
\draw[lred,line width=10pt] (7-\x,-1) -- (7-\x,1);
}
\node[left] at (0,0) {$1$};
\node[right] at (7,0) {$j$};
\foreach\x in {1,2,3}{
\node[text centered,below] at (7-\x,-1) {\tiny $m_{\x}$};
\node[text centered,above] at (7-\x,1) {\tiny $n_{\x}$};
}
\foreach\x in {4,5}{
\node[text centered,below] at (7-\x,-1) {\tiny $\cdots$};
\node[text centered,above] at (7-\x,1) {\tiny $\cdots$};
\draw[ultra thick,->] (0,0) -- (1,0);
}
\end{tikzpicture}\right).
\end{multline*}
\end{lem}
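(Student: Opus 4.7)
The plan is to prove both identities by a uniform power-counting argument: in each case I will show that the power of the row rapidity in the weight of \emph{any single} valid configuration is a function only of the boundary multiplicities $(m_i)$ and $(n_i)$. Once established, this lets one pull the rescaling of the rapidity out of the partition function as a single configuration-independent factor, after which the identities reduce to a trivial rearrangement.

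For the first identity (black weights), I note from \eqref{eq:black-vertices} that a vertex contributes a factor of $x$ precisely when its outgoing (right) horizontal edge carries an arrow. Letting $H_k^R\in\{0,1\}$ denote the occupation of that edge, the arrow-conservation relation at column $k$ is $H_k^R - H_{k+1}^R = m_k - n_k$ (using $H_k^L = H_{k+1}^R$), and the empty left boundary forces $H_k^R \to 0$ as $k \to \infty$. Iterating gives $H_k^R = \sum_{i \geq k}(m_i-n_i)$, and swapping the order of summation yields
\begin{equation*}
\sum_{k\geq 1} H_k^R \;=\; \sum_{i \geq 1} i(m_i - n_i).
\end{equation*}
So the total power of $x$ in every configuration equals $\sum_i i(m_i - n_i)$, and therefore the substitution $x\mapsto qx$ multiplies the partition function by the global factor $q^{\sum_i i(m_i-n_i)}$. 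Shifting this factor across as $q^{\sum_i in_i}$ on the left and $q^{\sum_i im_i}$ on the right is exactly the first claim.

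For the second identity (red weights), \eqref{eq:red-vertices} shows that a vertex contributes a factor of $y$ precisely when its right horizontal edge is \emph{un}occupied. The conservation relation is unchanged, but now an arrow enters from infinity on the left, so $H_k^R\to 1$ as $k\to\infty$. Truncating at any $K$ beyond the common support of $(m_i),(n_i)$, the same iteration gives $\sum_{k=1}^K H_k^R = K + \sum_i i(m_i-n_i)$, and hence the number of $y$-factors is $K - \sum_{k=1}^K H_k^R = \sum_i i(n_i - m_i)$. This count is independent of $K$ because every column past $K$ is of type~4 with weight $1$. Consequently $y\mapsto uy$ multiplies each configuration's weight by $u^{\sum_i i(n_i-m_i)}$, and rearranging yields the second claim.

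No serious obstacle arises. The only point requiring a moment's attention is the finiteness of the rapidity counts in the infinite-row limit \eqref{eq:L-limit}, and this is precisely guaranteed by the finite-support hypothesis on $(m_i),(n_i)$: all but finitely many vertices are of the trivial types (type~4 for red; types~1 or~4 for black) and contribute trivial weights, so the infinite products collapse and the counting arguments above are literal identities.
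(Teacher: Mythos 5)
Your proof is correct: the observation that the black weights carry a factor of the rapidity exactly on vertices whose right horizontal edge is occupied (and the red weights exactly when it is unoccupied), combined with the conservation law $H_k^R - H_{k+1}^R = m_k - n_k$ and the appropriate boundary value of $H^R$ at infinity, gives precisely the configuration-independent exponent $\sum_i i(m_i-n_i)$ (resp. $\sum_i i(n_i-m_i)$) needed. The paper states this lemma without proof, and your power-counting argument is the standard one the authors evidently intend.
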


\subsubsection{Boundary compatibility}

We will introduce an extra feature which we call a \emph{boundary vertex}, which corresponds to the boundary of the half space six vertex model. We will use a dot on a horizontal edge to denote it visually, and it has the effect of probabilistically outputting an arrow depending on the input, with probabilities
\begin{align*}
\begin{array}{cccc}
\begin{tikzpicture}[scale=0.8,>=stealth]
\draw[lgray,ultra thick] (-1,0) -- (1,0);
\node at (0,0) {$\bullet$};
\end{tikzpicture}
\quad\quad
&
\begin{tikzpicture}[scale=0.8,>=stealth]
\draw[lgray,ultra thick] (-1,0) -- (1,0);
\draw[ultra thick,->] (0,0) -- (1,0);
\node at (0,0) {$\bullet$};
\end{tikzpicture}
\quad\quad
&
\begin{tikzpicture}[scale=0.8,>=stealth]
\draw[lgray,ultra thick] (-1,0) -- (1,0);
\draw[ultra thick,->] (-1,0) -- (0,0);
\node at (0,0) {$\bullet$};
\end{tikzpicture}
\quad\quad
&
\begin{tikzpicture}[scale=0.8,>=stealth]
\draw[lgray,ultra thick] (-1,0) -- (1,0);
\draw[ultra thick,->] (-1,0) -- (1,0);
\node at (0,0) {$\bullet$};
\end{tikzpicture}
\vspace{0.2cm}
\\
1+abh_1(x)
\quad\quad
&
-abh_1(x)
\quad\quad
&
h_1(x)
\quad\quad
&
1-h_1(x)
\end{array}\end{align*}
where $x$ is the row rapidity. Note these are equal to the boundary vertex weights in the half space six vertex model given by \eqref{Stochastic boundary K-weight table}. We also define
\begin{align*}
\begin{array}{cccc}
\begin{tikzpicture}[scale=0.8,>=stealth]
\draw[lgray,ultra thick] (-1,0) -- (1,0);
\node[red] at (0,0) {$\bullet$};
\end{tikzpicture}
\quad\quad
&
\begin{tikzpicture}[scale=0.8,>=stealth]
\draw[lgray,ultra thick] (-1,0) -- (1,0);
\draw[ultra thick,->] (0,0) -- (1,0);
\node[red] at (0,0) {$\bullet$};
\end{tikzpicture}
\quad\quad
&
\begin{tikzpicture}[scale=0.8,>=stealth]
\draw[lgray,ultra thick] (-1,0) -- (1,0);
\draw[ultra thick,->] (-1,0) -- (0,0);
\node[red] at (0,0) {$\bullet$};
\end{tikzpicture}
\quad\quad
&
\begin{tikzpicture}[scale=0.8,>=stealth]
\draw[lgray,ultra thick] (-1,0) -- (1,0);
\draw[ultra thick,->] (-1,0) -- (1,0);
\node[red] at (0,0) {$\bullet$};
\end{tikzpicture}
\vspace{0.2cm}
\\
1-h_2(x)
\quad\quad
&
h_2(x)
\quad\quad
&
-cdh_2(x)
\quad\quad
&
1+cdh_2(x)
\end{array}\end{align*}
which correspond to the boundary weights \eqref{Stochastic boundary dual K-weight table}.

Note that due to the differences in the weights for the even and odd columns, we must pass the boundary vertices through two at a time. The first part of this lemma appeared already in \cite{H23}, and the second part follows from the first by conjugation of the weights (or a direct verification like in \cite{H23}).

\begin{lem}
	\label{lem: two vertex compat}
	For any choice of $i,j=0,1$ and $n_1,n_2\in\N$, we have
	\begin{multline*}
	\sum_{m_1,m_2=0}^{\infty}h_{m_2}(ab;t) a^{m_1}h_{m_1}\left(\frac{b}{a};t\right)
	w_x\left(
	\begin{tikzpicture}[scale=0.7,>=stealth,baseline=(current bounding box.center)]
	\draw[lgray,ultra thick] (-1.5,0) -- (2,0);
	\draw[lgray,line width=10pt] (0,-1) -- (0,1);
	\draw[lgray,line width=10pt] (1,-1) -- (1,1);
	\node[below] at (0,-1) {$m_2$};
	\node[above] at (0,1) {$n_2$};
	\node[below] at (1,-1) {$m_1$};
	\node[above] at (1,1) {$n_1$};
	\node[left] at (-1.5,0) {$i$};
	\node[right] at (2,0) {$j$};
	\node at (-1,0) {$\bullet$};
	\end{tikzpicture}
	\right)
	\\=\sum_{m_1,m_2=0}^{\infty} h_{m_2}(ab;t) a^{m_1}h_{m_1}\left(\frac{b}{a};t\right) w_x\left(
	\begin{tikzpicture}[scale=0.7,>=stealth,baseline=(current bounding box.center)]
	\draw[lred,ultra thick] (-1,0) -- (2.5,0);
	\draw[lred,line width=10pt] (0,-1) -- (0,1);
	\draw[lred,line width=10pt] (1,-1) -- (1,1);
	\node[below] at (0,-1) {$m_2$};
	\node[above] at (0,1) {$n_2$};
	\node[below] at (1,-1) {$m_1$};
	\node[above] at (1,1) {$n_1$};
	\node[left] at (-1,0) {$i$};
	\node[right] at (2.5,0) {$j$};
	\node at (2,0) {$\bullet$};
	\end{tikzpicture}
	\right),
	\end{multline*}
	and for any choice of $i,j=0,1$ and $m_1,m_2\in\N$, we have
	\begin{multline*}
	\sum_{n_1,n_2=0}^{\infty}h_{n_2}(cd;t) c^{n_1}h_{n_1}\left(\frac{d}{c};t\right)\frac{1}{(t;t)_{n_1}(t;t)_{n_2}}
	w_x\left(
	\begin{tikzpicture}[scale=0.7,>=stealth,baseline=(current bounding box.center)]
	\draw[lred,ultra thick] (-1.5,0) -- (2,0);
	\draw[lred,line width=10pt] (0,-1) -- (0,1);
	\draw[lred,line width=10pt] (1,-1) -- (1,1);
	\node[below] at (0,-1) {$m_2$};
	\node[above] at (0,1) {$n_2$};
	\node[below] at (1,-1) {$m_1$};
	\node[above] at (1,1) {$n_1$};
	\node[left] at (-1.5,0) {$i$};
	\node[right] at (2,0) {$j$};
	\node[red] at (-1,0) {$\bullet$};
	\end{tikzpicture}
	\right)
	\\=\sum_{n_1,n_2=0}^{\infty} h_{n_2}(cd;t) c^{m_1}h_{n_1}\left(\frac{d}{c};t\right)\frac{1}{(t;t)_{n_1}(t;t)_{n_2}} w_x\left(
	\begin{tikzpicture}[scale=0.7,>=stealth,baseline=(current bounding box.center)]
	\draw[lgray,ultra thick] (-1,0) -- (2.5,0);
	\draw[lgray,line width=10pt] (0,-1) -- (0,1);
	\draw[lgray,line width=10pt] (1,-1) -- (1,1);
	\node[below] at (0,-1) {$m_2$};
	\node[above] at (0,1) {$n_2$};
	\node[below] at (1,-1) {$m_1$};
	\node[above] at (1,1) {$n_1$};
	\node[left] at (-1,0) {$i$};
	\node[right] at (2.5,0) {$j$};
	\node[red] at (2,0) {$\bullet$};
	\end{tikzpicture}
	\right),
	\end{multline*}
\end{lem}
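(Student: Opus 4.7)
The plan is as follows. The first identity is proved in \cite{H23} as the reflection-equation-type statement used in setting up the half-space Hall--Littlewood process, so I would invoke that result directly. The proof there proceeds by case analysis: for each boundary configuration $(i,j) \in \{0,1\}^2$ and each fixed pair of outgoing top arrow counts $(n_1,n_2)$, both sides are expanded as finite sums over the internal arrow (between the two columns and between the boundary dot and the first column) and over the bottom counts $(m_1,m_2)$. The vertex weights from \eqref{eq:black-vertices} and \eqref{eq:red-vertices} make the $m_2$-sum collapse after splitting the boundary weight $a^{m_1} h_{m_1}(b/a;t) h_{m_2}(ab;t)$ according to the parity structure of $h_{\mu}$ in \eqref{RS-product}, and Mehler's identity \eqref{mehler} (or its specialization to $\beta=0$) then matches the two resulting expressions.

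For the second identity, I would deduce it from the first by a duality argument. The red weights \eqref{eq:red-vertices} are essentially the transpose of the black weights \eqref{eq:black-vertices}, up to a uniform rescaling by the row rapidity; at the level of one-row partition functions, this transpose is precisely what converts a $P$-type evaluation into a $Q$-type evaluation in Lemma \ref{lem: boson HL}, and therefore produces the factor $\frac{1}{(t;t)_{n_1}(t;t)_{n_2}} = 1/b_{\mu}(0,t)$ that is present in the second identity but absent in the first. Comparing the boundary weight tables \eqref{Stochastic boundary K-weight table} and \eqref{Stochastic boundary dual K-weight table}, one sees that the red boundary vertex with parameters $(c,d)$ is obtained from the black boundary vertex with parameters $(a,b)$ by the same transpose combined with the relabeling $(a,b)\mapsto(c,d)$. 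Applying this duality to the first identity therefore yields the second.

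The main obstacle is the bookkeeping for the normalization factor $\frac{1}{(t;t)_{n_1}(t;t)_{n_2}}$ and making sure the duality is implemented consistently on both sides (in particular, on the two-column block and on the boundary dot simultaneously). If that becomes unwieldy, one can instead run the same case-by-case verification as in \cite{H23} directly for the second identity; the structure is identical and the Mehler identity again does the work of collapsing the $(n_1,n_2)$ sums on either side. Either way, no new ideas beyond those of \cite{H23} are required, and the lemma follows.
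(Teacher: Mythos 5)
Your approach coincides with the paper's, which gives no written proof of this lemma beyond the remark preceding it: the first identity is quoted from \cite{H23}, and the second is said to follow from the first ``by conjugation of the weights (or a direct verification like in \cite{H23})''---exactly the two routes you propose. The one caveat is that the symmetry relating the black and red data is a conjugation (the occupation swap $0\leftrightarrow 1$ on the boundary-dot weights together with $(a,b)\mapsto(c,d)$, and the rapidity inversion implicit in passing from the weights \eqref{eq:black-vertices} to \eqref{eq:red-vertices}) rather than a literal transpose, but your fallback of running the case-by-case verification of \cite{H23} directly on the second identity covers this.
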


By iterating Lemma \ref{lem: two vertex compat}, we obtain the following proposition.
\begin{prop}
	\label{prop: two vertex compat}
	Let $n_1,n_2,\dotsc$ be a finitely supported sequence of non-negative integers. We have
	\begin{multline*}
	\sum_{\lambda}h_\lambda(a,b;t)w_x\left(
	\begin{tikzpicture}[baseline=(current bounding box.center),>=stealth,scale=0.8]
	\node[left] (0,0) {$1$};
	\draw[lgray,ultra thick] (0,0) -- (8,0);
	\draw[ultra thick,->] (0,0) -- (1,0);
	\foreach\x in {0,...,5}{
		\draw[lgray,line width=10pt] (7-\x,-1) -- (7-\x,1);
	}
	\node at (1,0) {$\bullet$};
	\node[right] at (8,0) {$j$};
	\foreach\x in {1,2,3}{
		\node[text centered,below] at (8-\x,-1) {\tiny $m_{\x}(\lambda)$};
		\node[text centered,above] at (8-\x,1) {\tiny $n_{\x}$};
	}
	\foreach\x in {3,4}{
		\node[text centered,below] at (7-\x,-1) {\tiny $\cdots$};
		\node[text centered,above] at (7-\x,1) {\tiny $\cdots$};
	}
	\end{tikzpicture}
	\right)
	\\
	=
	\sum_{\lambda}h_\lambda(a,b;t)w_x
	\left(
	\begin{tikzpicture}[baseline=(current bounding box.center),>=stealth,scale=0.8]
	\draw[lred,ultra thick] (0,0) -- (8,0);
	\foreach\x in {1,...,6}{
		\draw[lred,line width=10pt] (7-\x,-1) -- (7-\x,1);
	}
	\node[left] at (0,0) {$1$};
	\node at (7,0) {$\bullet$};
	\draw[ultra thick,->] (0,0) -- (1,0);
	\draw[ultra thick,->] (6,0) -- (7,0);
	\node[right] at (8,0) {$j$};
	\foreach\x in {1,2,3}{
		\node[text centered,below] at (7-\x,-1) {\tiny $m_{\x}(\lambda)$};
		\node[text centered,above] at (7-\x,1) {\tiny $n_{\x}$};
	}
	\foreach\x in {4,5}{
		\node[text centered,below] at (7-\x,-1) {\tiny $\cdots$};
		\node[text centered,above] at (7-\x,1) {\tiny $\cdots$};
	}
	\end{tikzpicture}
	\right),
	\end{multline*}
	and
	\begin{multline*}
	\sum_{\mu}\frac{h_\mu(c,d;t)}{\prod (t;t)_{m_i(\mu)}}w_x\left(
	\begin{tikzpicture}[baseline=(current bounding box.center),>=stealth,scale=0.8]
	\node[left] (0,0) {$0$};
	\draw[lred,ultra thick] (0,0) -- (8,0);
	\foreach\x in {0,...,5}{
		\draw[lred,line width=10pt] (7-\x,-1) -- (7-\x,1);
	}
	\node[red] at (1,0) {$\bullet$};
	\node[right] at (8,0) {$j$};
	\foreach\x in {1,2,3}{
		\node[text centered,below] at (8-\x,-1) {\tiny $n_{\x}$};
		\node[text centered,above] at (8-\x,1) {\tiny $m_{\x}(\mu)$};
	}
	\foreach\x in {3,4}{
		\node[text centered,below] at (7-\x,-1) {\tiny $\cdots$};
		\node[text centered,above] at (7-\x,1) {\tiny $\cdots$};
		\draw[ultra thick,->] (1,0) -- (2,0);
	}
	\end{tikzpicture}
	\right)
	\\
	=
	\sum_{\mu}\frac{h_\mu(c,d;t)}{\prod (t;t)_{m_i(\mu)}}w_x
	\left(
	\begin{tikzpicture}[baseline=(current bounding box.center),>=stealth,scale=0.8]
	\draw[lgray,ultra thick] (0,0) -- (8,0);
	\foreach\x in {1,...,6}{
		\draw[lgray,line width=10pt] (7-\x,-1) -- (7-\x,1);
	}
	\node[left] at (0,0) {$0$};
	\node[red] at (7,0) {$\bullet$};
	\draw[ultra thick,->] (6,0) -- (7,0);
	\node[right] at (8,0) {$j$};
	\foreach\x in {1,2,3}{
		\node[text centered,below] at (7-\x,-1) {\tiny $n_{\x}$};
		\node[text centered,above] at (7-\x,1) {\tiny $m_{\x}(\mu)$};
	}
	\foreach\x in {4,5}{
		\node[text centered,below] at (7-\x,-1) {\tiny $\cdots$};
		\node[text centered,above] at (7-\x,1) {\tiny $\cdots$};
	}
	\end{tikzpicture}
	\right),
	\end{multline*}
	where the sums are over partitions $\lambda$.
\end{prop}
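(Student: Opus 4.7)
The plan is to iterate Lemma~\ref{lem: two vertex compat} pair-by-pair along the row, moving the boundary vertex one pair of columns at a time. The key compatibility between the sum $\sum_\lambda h_\lambda(a,b;t)(\cdots)$ and the two-column step is that $h_\lambda(a,b;t)$ factorises as
\begin{equation*}
h_\lambda(a,b;t)
= \prod_{i \geq 1}
a^{m_{2i-1}(\lambda)} h_{m_{2i-1}(\lambda)}(b/a;t)\, h_{m_{2i}(\lambda)}(ab;t),
\end{equation*}
so that the weight attached to the pair of columns of index $(2i-1,2i)$ is precisely the weight
$h_{m_{2i}}(ab;t)\, a^{m_{2i-1}} h_{m_{2i-1}}(b/a;t)$
appearing in Lemma~\ref{lem: two vertex compat}. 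The row partition function, on the other hand, splits as a product of $2\times 1$ blocks once we fix the internal edge states between consecutive blocks (we sum over these at the end). Hence, after grouping the columns into adjacent pairs and summing over the external horizontal edge of each block, the entire expression becomes a nested sum of two-column quantities to which Lemma~\ref{lem: two vertex compat} applies directly.

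The procedure is then as follows. Start from the left-hand configuration, in which the boundary vertex (black) sits just to the right of the entering arrow, between columns $0$ and $1$. Apply Lemma~\ref{lem: two vertex compat} to the two-column block consisting of columns $1,2$: this converts the two gray vertices into red vertices and moves the boundary vertex across to sit between columns $2$ and $3$. We then repeat the argument on columns $3,4$, and so on. Each step is an application of Lemma~\ref{lem: two vertex compat} with the left boundary state $i$ being the internal edge carried over from the previous step and the right boundary state $j$ being summed over (together with the two vertical edges above the block). After finitely many steps the boundary vertex has been pushed all the way to the right, since only finitely many of the $m_i(\lambda)$ and $n_i$ are nonzero (so far to the right both rows contain only the trivial all-zero column, whose weight is $1$ in both colour conventions, making the limit \eqref{eq:L-limit} well-defined throughout). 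The resulting picture is exactly the right-hand side.

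The argument for the second identity is completely parallel, but applied in the opposite direction of colour conversion: one uses the second part of Lemma~\ref{lem: two vertex compat}, and the factorisation
\begin{equation*}
\frac{h_\mu(c,d;t)}{\prod_i (t;t)_{m_i(\mu)}}
= \prod_{i \geq 1}
\frac{c^{m_{2i-1}(\mu)} h_{m_{2i-1}(\mu)}(d/c;t)}{(t;t)_{m_{2i-1}(\mu)}}
\cdot
\frac{h_{m_{2i}(\mu)}(cd;t)}{(t;t)_{m_{2i}(\mu)}},
\end{equation*}
which again aligns precisely with the weights appearing in the second statement of Lemma~\ref{lem: two vertex compat}.

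The only nontrivial point is verifying that the pair-by-pair induction is legitimate in the infinite-row setting. This is handled by truncating to a row of length $2L$ via \eqref{eq:L-limit}: for $L$ large enough that all nonzero $m_i(\lambda), n_i$ lie within the first $2L$ columns, both one-row partition functions stabilise, the finitely many applications of Lemma~\ref{lem: two vertex compat} make sense, and one passes to the limit $L\to\infty$ afterward. I do not foresee a substantive obstacle; the content is really just the pair-by-pair telescoping together with the factorisation of $h_\lambda$.
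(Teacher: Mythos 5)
Your proposal is correct and is essentially the paper's own argument: the paper proves this proposition simply by iterating Lemma \ref{lem: two vertex compat}, which is exactly your pair-by-pair telescoping, with the factorisation of $h_\lambda(a,b;t)$ over column pairs supplying the matching weights and the truncation to length $2L$ justifying the passage to the infinite row. No substantive difference from the paper's (very terse) proof.
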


\subsection{Proof of Theorem \ref{thm: HL 6vm}}

Recall that
\begin{equation*}
    G(z)=\frac{(abcdz^2;u,t)_\infty}{(uz;u)_\infty (tz;t)_\infty (utz;u,t)_\infty(abcdz;u,t)_\infty }\prod_{s\in \left\{\substack{ab,ac,ad,\\bc,bd,cd}\right\}}\frac{1}{(sz;u,t)_\infty}.
\end{equation*}

\begin{lem}
	\label{lem: rand shift pgf}
	We have
	\begin{equation*}
	\sum_{\lambda}\frac{h_\lambda(a,b;t)h_\lambda(c/\sqrt{u},d/\sqrt{u};t)z^{l(\lambda)}u^{|\lambda|/2}}{\prod_i(t;t)_{m_i(\lambda)}}=G(z).
	\end{equation*}
\end{lem}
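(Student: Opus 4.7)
The strategy is to reduce the summation over partitions to an infinite product of one-variable Rogers--Szeg\H{o} sums, each of which is then evaluated via Mehler's identity \eqref{mehler}.

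First, I write $\lambda=1^{m_1}2^{m_2}\cdots$ so that $\ell(\lambda)=\sum_k m_k$ and $|\lambda|=\sum_k k m_k$. By the defining product \eqref{RS-product}, $h_\lambda(a,b;t)$ and $h_\lambda(c/\sqrt u,d/\sqrt u;t)$ each factor as products over $k\geq 1$ of quantities depending only on $m_k$; the denominator $\prod_i (t;t)_{m_i(\lambda)}$ is by definition a product over $k$; and $z^{\ell(\lambda)}u^{|\lambda|/2}=\prod_k z^{m_k}u^{km_k/2}$. Identifying $\lambda$ with the finitely supported sequence $(m_k)_{k\geq 1}$, the sum then factors as an independent product of one-variable geometric sums, one for each $k$.

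Second, separate the product by the parity of $k$. For odd $k=2j-1$ the shift $(c/\sqrt u)^m u^{(2j-1)m/2}=(c u^{j-1})^m$ collects the contribution into
\begin{equation*}
\sum_{m\geq 0}\frac{h_m(b/a;t)\,h_m(d/c;t)}{(t;t)_m}\bigl(acz\,u^{j-1}\bigr)^m,
\end{equation*}
and for even $k=2j$ it becomes
\begin{equation*}
\sum_{m\geq 0}\frac{h_m(ab;t)\,h_m(cd/u;t)}{(t;t)_m}\bigl(zu^{j}\bigr)^m.
\end{equation*}
Each of these is now directly in the form of Mehler's identity \eqref{mehler} (with base $t$), which I apply to obtain, for every $j\geq 1$, a ratio of four single-variable $(\cdot;t)_\infty$ factors.

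Third, I take the infinite product over $j\geq 1$ of these Mehler evaluations. Using the elementary collapse $\prod_{j\geq 0}(\alpha u^j;t)_\infty=(\alpha;u,t)_\infty$, applied to each factor whose argument ranges over $\alpha u^{j-1}$ or $\alpha u^j$, the products over odd and even $j$ interleave to form double Pochhammer symbols; in particular the numerator pieces $(abcdz^2 u^{2j-2};t)_\infty$ and $(abcdz^2 u^{2j-1};t)_\infty$ combine over all $j$ to give exactly $(abcdz^2;u,t)_\infty$, matching the numerator of $G(z)$.

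The final step is to reconcile the resulting double product with the precise denominator of $G(z)$. This is purely an exercise in $q$-Pochhammer algebra, using the standard splittings $(\alpha;u,t)_\infty=(\alpha;t)_\infty (\alpha u;u,t)_\infty$ and $(z;u,t)_\infty=(1-z)(tz;t)_\infty(uz;u)_\infty(utz;u,t)_\infty$ to convert the double $(u,t)$-Pochhammer symbols produced by Mehler into the mixed single/double expressions $(uz;u)_\infty$, $(tz;t)_\infty$, $(utz;u,t)_\infty$ appearing in $G(z)$. The only real obstacle is bookkeeping: the renormalization $c\mapsto c/\sqrt u$, $d\mapsto d/\sqrt u$ introduces an asymmetric half-shift in the power of $u$ between the odd and even index contributions, and one must track these shifts carefully to verify that the interleaving of odd and even factors produces precisely the advertised mixture of single- and double-$(u,t)$-Pochhammer factors.
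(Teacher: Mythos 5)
Your proposal is correct and follows the paper's proof essentially verbatim: both factor the sum over the multiplicities $m_k(\lambda)$, apply Mehler's identity separately to the even-indexed factors (argument $zu^{j}$) and the odd-indexed factors (argument $aczu^{j-1}$) --- your two displayed one-variable sums coincide exactly with the paper's --- and then multiply over all indices. One caveat, which applies equally to the paper's own one-line conclusion ``multiplying over all $i$ gives the desired equality'': the Pochhammer bookkeeping that you defer actually produces $(uz;u,t)_\infty=(uz;u)_\infty(utz;u,t)_\infty$ and $(abuz;u,t)_\infty$ in the denominator rather than the factors $(uz;u)_\infty(tz;t)_\infty(utz;u,t)_\infty$ and $(abz;u,t)_\infty$ appearing in the stated $G(z)$ (already at $a=b=c=d=0$ the sum equals $\prod_{j\geq 1}(zu^j;t)_\infty^{-1}$, which lacks the factor $(tz;t)_\infty^{-1}$ present in $G$), so the discrepancy lies in the displayed formula for $G$ rather than in your method, but your assertion that the interleaving yields ``precisely the advertised mixture'' should not be taken at face value.
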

\begin{proof}
	We recall Mehler's formula for Rogers--Szeg\"o polynomials, which states that (see e.g. \cite{C72})
	\begin{equation*}
	\sum_{k}h_k(a;t)h_k(b;t)\frac{z^k}{(t;t)_k}=\frac{(abz^2;t)_\infty}{(z;t)_\infty(az;t)_\infty(bz;t)_\infty(abz;t)_\infty}.
	\end{equation*}
	We then rewrite the sum over $\lambda$ as a sum over $m_i=m_i(\lambda)$, obtaining a product over $i$ of sums of this form. The even factors are of the form
	\begin{equation*}
	\sum_{m_i}h_{m_i}(ab;t)h_{m_i}(cd/u;t)\frac{u^{im_i/2}z^{m_i}}{(t;t)_{m_i}}=\frac{(abcdu^{i-1}z^2;t)_\infty}{(u^{i/2}z;t)_\infty(abzu^{i/2};t)_\infty(cdzu^{(i-2)/2};t)_\infty(abcdzu^{(i-2)/2};t)_\infty}
	\end{equation*}
	and the odd factors are of the form
	\begin{equation*}
 \begin{split}
	&\sum_{m_i}h_{m_i}(a/b;t)h_{m_i}(c/d;t)\frac{(bd)^{m_i}u^{(i-1)m_i/2}z^{m_i}}{(t;t)_{m_i}}
 \\=&\frac{(abcdu^{i-1}z^2;t)_\infty}{(bdu^{(i-1)/2}z;t)_\infty(adzu^{(i-1)/2};t)_\infty(cbzu^{(i-1)/2};t)_\infty(aczu^{(i-1)/2};t)_\infty}.
\end{split}
	\end{equation*}
	Multiplying over all $i$ gives the desired equality.
\end{proof}

\begin{proof}[Proof of Theorem \ref{thm: HL 6vm}]
	We have by Lemma \ref{lem: boson HL} that $\mathbb{FBHL}_a^{(q,t,\nu)}(\ell(\lambda^{(0)})=n,[\vec{\lambda}]=s)$ is equal to
	\begin{equation*}
	\frac{1}{\Phi_{HL}}\sum_{\ell(\mu)=n}\frac{h_{\lambda}(a,b;t)h_\mu(c/\sqrt{q},d/\sqrt{q};t)q^{|\mu|/2}}{\prod_i (t;t)_{m_i(\mu)}}
	\begin{tikzpicture}[scale=0.8,baseline=(current bounding box.center),>=stealth]
	%lattice
	\foreach\x in {0,...,6}{
		\draw[lgray,line width=10pt] (\x,0) -- (\x,7);
	}
	\foreach\y in {1,...,6}{
		\draw[lgray,thick] (-1,\y) -- (7,\y);
	}
	%black paths
	\draw[ultra thick,->] (6,1) -- (7,1); \draw[ultra thick,->] (6,2) -- (7,2);
	\draw[ultra thick,->] (6,4) -- (7,4); \draw[ultra thick,->] (6,6) -- (7,6);
	\node[right] at (7,1) {$s_N$}; \node[right] at (7,6) {$s_1$};
	%labels
	\node[left] at (-1,6) {$x_1$};
	\node[left] at (-1,3.5) {$\vdots$};
	\node[left] at (8,3.5) {$\vdots$};
	\node[left] at (-1,1) {$x_N$};
	\node[below] at (6,0) {\tiny $m_1(\lambda)$};
	\node[below] at (5,0) {\tiny $m_2(\lambda)$};
	\node[below] at (4,0) {\tiny $m_3(\lambda)$};
	\node[below] at (2,0) {$\cdots$};
	\node[above] at (6,7) {\tiny $m_1(\mu)$};
	\node[above] at (5,7) {\tiny $m_2(\mu)$};
	\node[above] at (4,7) {\tiny $m_3(\mu)$};
	\node[above] at (2,7) {$\cdots$};
	\end{tikzpicture}
	\end{equation*}
	where the boundary conditions on the left are empty (the $a_i$ indicate the row rapidities) and the boundary conditions on the right are indicated by the $s_i$. Here, we write $\lambda^{(0)}=\mu$ and $\lambda^{(N)}=\lambda$.
	
	We can introduce a boundary vertex at the left on the bottom row, at the cost of a $\frac{1-x_n^2}{(1-ax_n)(1+bx_n)}$ factor (which will cancel with a factor in $\Phi_{HL}$), since if an arrow enters the row the weight will be $0$ so only one outcome is possible. We then use Proposition \ref{prop: two vertex compat} to move the boundary vertex to the right, resulting in the expression
	\begin{equation*}
	\begin{array}{l}
	\displaystyle
	\frac{1-x_n^2}{(1+ax_n)(1+bx_n)}\frac{1}{\Phi_{HL}}
	\\
	\\\displaystyle
	\qquad\times \sum_{\ell(\mu)=n}\frac{h_\lambda(a,b;t)h_\mu(c/\sqrt{q},d/\sqrt{q};t)q^{|\mu|/2}}{\prod_i (t;t)_{m_i(\mu)}}
	\end{array}
	\begin{tikzpicture}[scale=0.8,baseline=(current bounding box.center),>=stealth]
	%lattice
	\foreach\x in {0,...,6}{
		\draw[lgray,line width=10pt] (\x,1) -- (\x,7);
	}
	\foreach\y in {2,...,6}{
		\draw[lgray,thick] (-1,\y) -- (7,\y);
	}
	\foreach\x in {0,...,6}{
		\draw[lred,line width=10pt] (\x,-1) -- (\x,1);
	}
	\foreach\y in {0}{
		\draw[lred,thick] (-1,\y) -- (8,\y);
	}
	\node at (7,0) {$\bullet$};
	%black paths
	\draw[ultra thick,->] (-1,0) -- (0,0);
	\draw[ultra thick,->] (7,0) -- (8,0); \draw[ultra thick,->] (6,2) -- (7,2);
	\draw[ultra thick,->] (6,4) -- (7,4); \draw[ultra thick,->] (6,6) -- (7,6);
	\node[right] at (7,2) {$s_{N-1}$};
	\node[right] at (8,0) {$s_N$}; \node[right] at (7,6) {$s_1$};
	%labels
	\node[left] at (-1,6) {$x_1$};
	\node[left] at (-1,3.5) {$\vdots$};
	\node[left] at (8,3.5) {$\vdots$};
	\node[left] at (-1,2) {$x_{N-1}$};
	\node[left] at (-1,0) {$x_N$};
	\node[below] at (6,-1) {\tiny $m_1(\lambda)$};
	\node[below] at (5,-1) {\tiny $m_2(\lambda)$};
	\node[below] at (4,-1) {\tiny $m_3(\lambda)$};
	\node[below] at (2,-1) {$\cdots$};
	\node[above] at (6,7) {\tiny $m_1(\mu)$};
	\node[above] at (5,7) {\tiny $m_2(\mu)$};
	\node[above] at (4,7) {\tiny $m_3(\mu)$};
	\node[above] at (2,7) {$\cdots$};
	\end{tikzpicture}
	\end{equation*}
	
	Next, we use Proposition \ref{prop: YB boson} to swap the bottom row all the way to the top, at the cost of a $\prod _i\frac{1-x_ix_n}{1-tx_ix_n}$ factor, coming from $\Phi_{HL}$, resulting in the expression
	\begin{multline*}
	\frac{1-x_n^2}{(1+ax_n)(1+bx_n)}\prod_{i<n}\frac{1-x_ix_n}{1-tx_ix_n}\frac{1}{\Phi_{HL}} \sum_{\ell(\mu)=n}\frac{h_\lambda(a,b;t)h_\mu(c/\sqrt{q},d/\sqrt{q};t)q^{|\mu|/2}}{\prod_i (t;t)_{m_i(\mu)}}
    \\
	\begin{tikzpicture}[scale=0.8,baseline=(current bounding box.center),>=stealth]
	%lattice
	\foreach\x in {0,...,6}{
		\draw[lgray,line width=10pt] (\x,-1) -- (\x,5);
	}
	\foreach\y in {0,...,4}{
		\draw[lgray,thick] (-1,\y) -- (7,\y);
	}
	\foreach\x in {0,...,6}{
		\draw[lred,line width=10pt] (\x,5) -- (\x,7);
	}
	\foreach\y in {6}{
		\draw[lred,thick] (-1,\y) -- (7,\y);
	}
	%side lattice
	\draw[thick, dotted] (7,6) -- (10.5,2.5);
	\node at (10.5,2.5) {$\bullet$}; \draw[ultra thick,->] (10.5,2.5) -- (11,3);
	\draw[thick, dotted] (7,4) -- (8.5,5.5) node[above right] {$s_1$};
	\draw[thick, dotted] (7,3) -- (9,5);
	\draw[thick, dotted] (7,2) -- (9.5,4.5);
	\draw[thick, dotted] (7,1) -- (10,4);
	\draw[thick, dotted] (7,0) -- (10.5,3.5) node[above right] {$s_{N-1}$};
	\draw[thick, dotted] (10.5,2.5) -- (11,3) node[above right] {$s_N$};
	\draw[ultra thick,->] (8,5) -- (8.5,5.5); \draw[ultra thick,->] (9,4) -- (9.5,4.5);
	\draw[ultra thick,->] (10,3) -- (10.5,3.5);
	%black paths
	\draw[ultra thick,->] (-1,6) -- (0,6);
	%labels
	\node[left] at (-1,4) {$x_1$};
	\node[left] at (-1,1.5) {$\vdots$};
	\node[left] at (-1,0) {$x_{N-1}$};
	\node[left] at (-1,6) {$x_N$};
	\node[below] at (6,-1) {\tiny $m_1(\lambda)$};
	\node[below] at (5,-1) {\tiny $m_2(\lambda)$};
	\node[below] at (4,-1) {\tiny $m_3(\lambda)$};
	\node[below] at (2,-1) {$\cdots$};
	\node[above] at (6,7) {\tiny $m_1(\mu)$};
	\node[above] at (5,7) {\tiny $m_2(\mu)$};
	\node[above] at (4,7) {\tiny $m_3(\mu)$};
	\node[above] at (2,7) {$\cdots$};
	\end{tikzpicture}
	\end{multline*}

	Repeating this process, we obtain
	\begin{multline*}
	\prod_{i=1}^n\frac{1-x_i^2}{(1+ax_i)(1+ba_i)}\prod_{i<j}\frac{1-x_ix_j}{1-tx_ix_j}\frac{1}{\Phi_{HL}}\sum_{\ell(\lambda)+H_1=n}\frac{h_\lambda(a,b;t)h_\mu(c/\sqrt{q},d/\sqrt{q};t)q^{|\lambda|/2}}{\prod_i (t;t)_{m_i(\mu)}}
	\\
	\begin{tikzpicture}[scale=0.8,baseline=(current bounding box.center),>=stealth]
	%lattice
	\foreach\x in {0,...,6}{
		\draw[lred,line width=10pt] (\x,0) -- (\x,7);
	}
	\foreach\y in {1,...,6}{
		\draw[lred,thick] (-1,\y) -- (7,\y);
		\draw[ultra thick,->] (-1,\y) -- (0,\y);
	}
	%side lattice
	\draw[thick, dotted] (7,6) -- (12.5,0.5); 
	\draw[thick, dotted] (12.5,0.5) -- (13,1) node[above right] {$s_N$}; \draw[ultra thick,->] (12.5,0.5) -- (13,1);
	\draw[thick, dotted] (7,5) -- (11.5,0.5);
	\draw[thick, dotted] (11.5,0.5) -- (12.5,1.5); \draw[ultra thick,->] (12,1) -- (12.5,1.5);
	\draw[thick, dotted] (7,4) -- (10.5,0.5); 
	\draw[thick, dotted] (10.5,0.5) -- (12,2);
	\draw[thick, dotted] (7,3) -- (9.5,0.5); 
	\draw[thick, dotted] (9.5,0.5) -- (11.5,2.5); \draw[ultra thick,->] (11,2) -- (11.5,2.5);
	\draw[thick, dotted] (7,2) -- (8.5,0.5);
	\draw[thick, dotted] (8.5,0.5) -- (11,3);
	\draw[thick, dotted] (7,1) -- (7.5,0.5);
	\draw[thick, dotted] (7.5,0.5) -- (10.5,3.5) node[above right] {$s_1$}; \draw[ultra thick,->] (10,3) -- (10.5,3.5);
	%labels
	\node[left] at (-1,6) {$\sqrt{q}x_N$};
	\node[left] at (-1,3.5) {$\vdots$};
	\node[left] at (-1,1) {$\sqrt{q}x_1$};
	\node[below] at (6,0) {\tiny $m_1(\lambda)$};
	\node[below] at (5,0) {\tiny $m_2(\lambda)$};
	\node[below] at (4,0) {\tiny $m_3(\lambda)$};
	\node[below] at (2,0) {$\cdots$};
	\node[above] at (6,7) {\tiny $m_1(\mu)$};
	\node[above] at (5,7) {\tiny $m_2(\mu)$};
	\node[above] at (4,7) {\tiny $m_3(\mu)$};
	\node[above] at (2,7) {$\cdots$};
	\node at (7,3.5){$H_1$};
	\draw[fill=white](12.7,0.4)--(7.1,0.4)--(9.9,3.2) --cycle;\draw (9.9,1.5)node[]{$1$};
	\end{tikzpicture}
	\end{multline*}
where we have also used Lemma \ref{lem: u power shift} to shift the power in $q$, and $H_1+\ell(\lambda)=\ell(\mu)$, where $H_1$ is the number of edges leaving the right of the boson portion of the model without arrows.
	
Notice that now the sum over $\mu$ is free, and so we may again introduce a boundary vertex, this time at the top, and use Proposition \ref{prop: two vertex compat} to move it to the right, before using Proposition \ref{prop: YB boson} to move it to the bottom. Doing so, we obtain

	\begin{multline*}
    \frac{1-qx_n^2}{(1+cx_n)(1+dx_n)}\prod_{i<n}\frac{1-qx_ix_n}{1-qtx_ix_n}
    \\
\times\prod_{i=1}^n\frac{1-a_i^2}{(1-a_i\nu t)(1+a_i/\nu)}\prod_{i<j}\frac{1-a_ia_j}{1-qa_ia_j}\frac{1}{\Phi_{HL}}\sum_{\ell(\lambda)+H_1=n}\frac{h_\lambda(a,b;t)h_\mu(c/\sqrt{q},d/\sqrt{q};t)q^{|\lambda|/2}}{\prod_i (t;t)_{m_i(\mu)}}
\\
\begin{tikzpicture}[scale=0.8,baseline=(current bounding box.center),>=stealth]
%lattice
\foreach\x in {0,...,6}{
	\draw[lred,line width=10pt] (\x,1) -- (\x,7);
}
\foreach\x in {0,...,6}{
	\draw[lgray,line width=10pt] (\x,-1) -- (\x,1);
}
\foreach\y in {2,...,6}{
	\draw[lred,thick] (-1,\y) -- (7,\y);
	\draw[ultra thick,->] (-1,\y) -- (0,\y);
}
\draw[lgray,thick](-1,0)--(7,0);
%side lattice
\draw[thick, dotted] (11,4) -- (14.5,0.5); 
\draw[thick, dotted] (14.5,0.5) -- (15,1) node[above right] {$s_N$}; \draw[ultra thick,->] (14.5,0.5) -- (15,1);
\draw[thick, dotted] (7,6)--(8,6) -- (13.5,0.5);
\draw[thick, dotted] (13.5,0.5) -- (14.5,1.5); \draw[ultra thick,->] (14,1) -- (14.5,1.5);
\draw[thick, dotted] (7,5)--(8,5) -- (12.5,0.5); 
\draw[thick, dotted] (12.5,0.5) -- (14,2);
\draw[thick, dotted] (7,4)--(8,4) -- (11.5,0.5); 
\draw[thick, dotted] (11.5,0.5) -- (13.5,2.5); \draw[ultra thick,->] (13,2) -- (13.5,2.5);
\draw[thick, dotted] (7,3)--(8,3) -- (10.5,0.5);
\draw[thick, dotted] (10.5,0.5) -- (13,3);
\draw[thick, dotted] (7,2)--(8,2) -- (9.5,0.5);
\draw[thick, dotted] (9.5,0.5) -- (12.5,3.5) node[above right] {$s_1$}; \draw[ultra thick,->] (12,3) -- (12.5,3.5);
\draw[thick, dotted](7,0)--(11,4)node[red]{$\bullet$};
%labels
\node[left] at (-1,0) {$\sqrt{q}x_{N}$};
\node[left] at (-1,6) {$\sqrt{q}x_{N-1}$};
\node[left] at (-1,4) {$\vdots$};
\node[left] at (-1,2) {$\sqrt{q}x_1$};
\node[below] at (6,-1) {\tiny $m_1(\lambda)$};
\node[below] at (5,-1) {\tiny $m_2(\lambda)$};
\node[below] at (4,-1) {\tiny $m_3(\lambda)$};
\node[below] at (2,-1) {$\cdots$};
\node[above] at (6,7) {\tiny $m_1(\mu)$};
\node[above] at (5,7) {\tiny $m_2(\mu)$};
\node[above] at (4,7) {\tiny $m_3(\mu)$};
\node[above] at (2,7) {$\cdots$};
\node at (10.2,2.3){$H_1$};
\draw[fill=white](14.9,0.4)--(9.1,0.4)--(12,3.3) --cycle;\draw (11.9,1.5)node[]{$1$};
\end{tikzpicture}
\end{multline*}

After doing this once for each row (essentially repeating the process from before except top to bottom), we obtain

\begin{multline*}
\prod_{i=1}^n\frac{1-x_i^2}{(1+ax_i\nu t)(1+bx_i)}\prod_{i<j}\frac{1-x_ix_j}{1-tx_ix_j}\prod_{i=1}^n\frac{1-qx_i^2}{(1+cx_i\nu t)(1+dx_i)}\prod_{i<j}\frac{1-qx_ix_j}{1-qtx_ix_j}
\\\times\frac{1}{\Phi_{HL}}\sum_{\ell(\mu)+H_1+V_1=n}\frac{h_\lambda(a,b;t)h_\mu(c/\sqrt{q},d/\sqrt{q};t)q^{|\lambda|/2}}{\prod_i (t;t)_{m_i(\mu)}}
\\
\begin{tikzpicture}[scale=0.8,baseline=(current bounding box.center),>=stealth]
%lattice
\foreach\x in {0,...,6}{
	\draw[lgray,line width=10pt] (\x,0) -- (\x,7);
}
\foreach\y in {1,...,6}{
	\draw[lgray,thick] (-1,\y) -- (7,\y);
}
%side lattice
\draw[thick, dotted] (12.5,6.5) -- (16,3); 
\draw[thick, dotted] (16,3) -- (16.5,3.5) node[above right] {$s_N$}; \draw[ultra thick,->] (16,3) -- (16.5,3.5);
\draw[thick, dotted] (7,2)--(11.5,6.5) -- (15,3);
\draw[thick, dotted] (15,3) -- (16,4); \draw[ultra thick,->] (15.5,3.5) -- (16,4);
\draw[thick, dotted] (7,3)--(10.5,6.5) -- (14,3); 
\draw[thick, dotted] (14,3) -- (15.5,4.5);
\draw[thick, dotted] (7,4)--(9.5,6.5) -- (13,3); 
\draw[thick, dotted] (13,3) -- (15,5); \draw[ultra thick,->] (14.5,4.5) -- (15,5);
\draw[thick, dotted] (7,5)--(8.5,6.5)--(9.5,5.5) -- (12,3);
\draw[thick, dotted] (12,3) -- (14.5,5.5);
\draw[thick, dotted] (7,6)--(7.5,6.5) -- (11,3);
\draw[thick, dotted] (11,3) -- (14,6) node[above right] {$s_1$}; \draw[ultra thick,->] (13.5,5.5) -- (14,6);
\draw[thick, dotted](7,1)--(12.5,6.5);
%labels
\node[left] at (-1,6) {$qx_1$};
\node[left] at (-1,3.5) {$\vdots$};
\node[left] at (-1,1) {$qx_N$};
\node[below] at (6,0) {\tiny $m_1(\lambda)$};
\node[below] at (5,0) {\tiny $m_2(\lambda)$};
\node[below] at (4,0) {\tiny $m_3(\lambda)$};
\node[below] at (2,0) {$\cdots$};
\node[above] at (6,7) {\tiny $m_1(\mu)$};
\node[above] at (5,7) {\tiny $m_2(\mu)$};
\node[above] at (4,7) {\tiny $m_3(\mu)$};
\node[above] at (2,7) {$\cdots$};
\node at (11.8,4.7){$H_1$};
\node at (8.2,4.7){$V_1$};
\draw[fill=white](16.4,2.9)--(10.6,2.9)--(13.5,5.8) --cycle;\draw (13.5,4)node[]{$1$};
\draw[fill=white](12.9,6.6)--(7.1,6.6)--(10,3.7) --cycle;\draw (10,5.5)node[]{$\sqrt{q}$};
\end{tikzpicture}
\end{multline*}
where again we use Lemma \ref{lem: u power shift} to shift the power of $q$ and the fact that $\ell(\lambda)=\ell(\mu)+V_1$, where $V_1$ denotes the number of arrows exiting the boson portion of the model.

At this point, we recognize that the boson portion of the model is exactly the same as before, and so we can simply repeat the process $L$ times, obtaining

\begin{multline*}
\prod_{i=1}^n \frac{(x_i^2;q)_{2L}}{(-ax_i;q)_L(-bx_i;q)_L(-cx_i;q)_L(-dx_i;q)_L}\prod_{i<j}\frac{(x_ix_j;q)_{2L}}{(tx_ix_j;q)_{2L}}
\\\times\sum_{\ell(\mu)+H_1+V_1+\dotsc+H_L+V_L=n}\frac{h_\lambda(a,b;t)h_\mu(c/\sqrt{q},d/\sqrt{q};t)q^{|\lambda|/2}}{\prod_i (t;t)_{m_i(\mu)}}
\\
\begin{tikzpicture}[scale=0.8,baseline=(current bounding box.center),>=stealth]
%lattice
\foreach\x in {2,...,6}{
	\draw[lgray,line width=10pt] (\x,0) -- (\x,7);
}
\foreach\y in {1,...,6}{
	\draw[lgray,thick] (1,\y) -- (7,\y);
}
%side lattice
\draw[thick, dotted](7,1)--(12.5,6.5);
\draw[thick, dotted] (12.5,6.5) --(13,6);
\draw[thick, dotted](16,6)-- (19,3); 
\draw[thick, dotted] (19,3) -- (19.5,3.5) node[above right] {$s_N$}; \draw[ultra thick,->] (19,3) -- (19.5,3.5);
\draw[thick, dotted] (7,2)--(11.5,6.5)--(12.5,5.5);
\draw[thick, dotted](15.5,5.5) -- (18,3);
\draw[thick, dotted] (18,3) -- (19,4); \draw[ultra thick,->] (18.5,3.5) -- (19,4);
\draw[thick, dotted] (7,3)--(10.5,6.5) -- (12,5);
\draw[thick,dotted](15,5)--(17,3); 
\draw[thick, dotted] (17,3) -- (18.5,4.5);
\draw[thick, dotted] (7,4)--(9.5,6.5) -- (11.5,4.5);
\draw[thick, dotted](14.5, 4.5)--(16,3); 
\draw[thick, dotted] (16,3) -- (18,5); \draw[ultra thick,->] (17.5,4.5) -- (18,5);
\draw[thick, dotted] (7,5)--(8.5,6.5)--(11,4);
\draw[thick, dotted](14,4) -- (15,3);
\draw[thick, dotted] (15,3) -- (17.5,5.5);
\draw[thick, dotted] (7,6)--(7.5,6.5)--(10.5,3.5);
\draw[thick,dotted](13.5,3.5) -- (14,3);
\draw[thick, dotted] (14,3) -- (17,6) node[above right] {$s_1$}; \draw[ultra thick,->] (16.5,5.5) -- (17,6);
%labels
\node[left] at (1,6) {$q^Lx_1$};
\node[left] at (1,3.5) {$\vdots$};
\node[left] at (1,1) {$q^Lx_N$};
\node[below] at (6,0) {\tiny $m_1(\lambda)$};
\node[below] at (5,0) {\tiny $m_2(\lambda)$};
\node[below] at (4,0) {\tiny $m_3(\lambda)$};
\node[below] at (3,0) {$\cdots$};
\node[above] at (6,7) {\tiny $m_1(\mu)$};
\node[above] at (5,7) {\tiny $m_2(\mu)$};
\node[above] at (4,7) {\tiny $m_3(\mu)$};
\node[above] at (3,7) {$\cdots$};
\node at (11.8,4.7){$H_L$};
\node at (8.2,4.7){$V_L$};
\draw[fill=white](19.4,2.9)--(13.6,2.9)--(16.5,5.8) --cycle;\draw (16.5,4)node[]{$1$};
\draw[fill=white](12.9,6.6)--(7.1,6.6)--(10,3.7) --cycle;\draw (10,5.5)node[]{$q^{L-\frac{1}{2}}$};
\node at (14.8,4.7){$H_1$};
\node at (13.3,4.7){$\dots$};
\end{tikzpicture}
\end{multline*}

Finally, we can send $L\to\infty$, and note that the row parameters will go to $0$ in the boson portion of the model. This has the effect of disallowing any turns, but arrows may still enter from the bottom and exit from the top. Moreover, the boson model and six vertex model portions decouple, in the sense that there is only one allowed configuration on the edges connecting the two, which is either that every edge is occupied, or no edge is occupied, depending on whether $L$ is odd or even.

The boson model portion then contributes according to the distribution given by Lemma \ref{lem: rand shift pgf}. We recognize the six vertex model portion as exactly the quasi-open six vertex model. We then see that the expression computes the probability that $S=s$ in the six vertex model portion, and that $l(\lambda)+H+V=n$, where $\lambda$ has probability generating function $G$ (and is independent of the six vertex model portion), and $H$ denotes the total number of horizontal edges between triangles which are unoccupied, and $V$ denotes the total number of vertical edges between triangles which are occupied, which is the desired result. In particular, the constants obtained in the above argument exactly cancel $\Phi_{HL}$.
\end{proof}

\subsection{Koornwinder evaluation for Hall--Littlewood measure}
We now state a bounded Littlewood identity which evaluates a bounded sum over skew Hall--Littlewood polynomials in terms of rectangular Koornwinder polynomials. This is essentially Theorem 3.14 of \cite{FV17}. In particular, the construction there is given in terms of $R$ and $K$ matrices which match ours. One could also prove this result using the techniques of \cite{RW21}, where a special case appears as Theorem 4.7.
\begin{thm}
\label{thm: koornwinder}
Fix an even integer $2n \geq 0$. We have
    \begin{equation*}
C_n\left(\prod_{i=1}^N x_i^n\right)K_{n^N}(x;q,t,a,b,c,d)
=
\sum_{\mu \subseteq \lambda:\lambda_1 \leq 2n}
\frac{h_{\lambda}(a,b;t)}{h_{m_{2n}(\lambda)}(ab;t)}
\cdot
P_{\lambda/\mu}(x;0,t)
\cdot
\frac{q^{|\mu|/2}
h_{\mu}(c/\sqrt{q},d/\sqrt{q};t)}{\prod_{i \geq 1}(t;t)_{m_i(\mu)}},
\end{equation*}
where
\begin{equation*}
C_n=C_n(q,t,a,b,c,d)=\frac{\prod_{i=n-1}^{2n-2}(abcdq^i;t)_\infty}{\prod_{i=1}^n(q^i;t)_\infty\prod_{i=1}^{n-1}(abq^i;t)_\infty\prod_{i=0}^{n-1}\prod_{s\in \{ac,ad,bc,bd,cd\}}(sq^i;t)_\infty}.
\end{equation*}
\end{thm}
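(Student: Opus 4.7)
The plan is to reduce the identity to Theorem 3.14 of \cite{FV17} by identifying our vertex-model data with theirs. Finn and Vanicat construct rectangular Koornwinder polynomials $K_{n^N}$ as partition functions of the boson (fused XXZ) model on a strip of width $2n$ with $N$ rows, closed on the left by a reflecting $K$-matrix carrying the boundary parameters $(a,b)$ and on the right by an independent reflecting $K$-matrix carrying $(c,d)$. The $R$-matrix entries and the $K$-matrix entries in \cite{FV17} agree, up to a gauge, with the boson-model weights of \eqref{eq:black-vertices}--\eqref{eq:red-vertices} and the boundary weights used in Lemma \ref{lem: two vertex compat}, once the parameters are matched.

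With this dictionary in hand, the next step is to rewrite the right-hand side of the theorem as a partition function in the same model. Applying Lemma \ref{lem: boson HL} four times and the branching rule, the skew Hall--Littlewood polynomial $P_{\lambda/\mu}(x;0,t)$ becomes an $N$-row strip of the boson model with rapidities $x_1,\dots,x_N$, upper vertical boundary $\mu$ and lower vertical boundary $\lambda$. The factors $h_{\lambda}(a,b;t)$ and $q^{|\mu|/2}h_{\mu}(c/\sqrt{q},d/\sqrt{q};t)/\prod_i (t;t)_{m_i(\mu)}$ are then exactly the weights produced by attaching, respectively, a left-boundary vertex at the bottom of the strip (via the first identity of Proposition \ref{prop: two vertex compat}) and a right-boundary vertex at the top (via the second identity), in complete analogy with the first two steps of the proof of Theorem \ref{thm: HL 6vm}. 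The truncation $\lambda_1\leq 2n$ restricts the underlying strip to width $2n$, and the extra normalization $1/h_{m_{2n}(\lambda)}(ab;t)$ strips off the contribution of the rightmost column, whose content is determined by the boundary.

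Once both sides are written as partition functions of the same model, the identification of the left-hand side as $C_n\prod_i x_i^n\,K_{n^N}(x;q,t,a,b,c,d)$ follows from \cite[Theorem 3.14]{FV17}: the rectangular shape $n^N$ reflects the total flux of $n$ arrows across each of the $N$ horizontal lines, and the monomial prefactor $\prod_i x_i^n$ captures the homogeneity of the partition function in the row rapidities. The explicit constant $C_n$ is then pinned down by specializing at $x=0$: on the Koornwinder side one uses Macdonald's rectangular evaluation formula, while on the skew-Hall--Littlewood side only the term $\lambda=\mu=(2n)^k$ for $k=0,1,\dots$ survives, and its contribution is summed by Mehler's identity \eqref{mehler} exactly as in the computation of $\Lambda(0;q,t;a,b,c,d)$ in the proof of Theorem \ref{thm:pf}.

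The main obstacle is bookkeeping: \cite{FV17} uses a slightly different overall gauge for the boson $R$-matrix and for the reflecting $K$-matrices, so the identification of parameters and the exact form of $C_n$ require careful tracking of bulk factors $(1-tx_ix_j)$ and boundary factors $(1+ax_i)(1+bx_i)(1+cx_i)(1+dx_i)$, together with the $q$- and $\sqrt{q}$-shifts coming from the Rogers--Szeg\H{o} specializations. Since the argument is essentially contained in \cite{FV17}, one would present it as a translation of their result rather than a fully independent proof.
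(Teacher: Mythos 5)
Your overall route is the same as the paper's: both rewrite the right-hand side as a boson-model partition function (via Lemma \ref{lem: boson HL} and Proposition \ref{prop: two vertex compat}), defer the identification with a rectangular Koornwinder polynomial to \cite{FV17}, and pin down the scalar by looking at the constant term in $x$. The paper is slightly more explicit about \emph{why} the partition function is a Koornwinder polynomial --- it invokes the $q$-KZ characterization of nonsymmetric Koornwinder polynomials, verified through Propositions \ref{prop: YB boson} and \ref{prop: two vertex compat} (with the $\sqrt{q}$-shift explaining the affine reflection $x_1\mapsto qx_1^{-1}$), together with $BC$-Weyl invariance of the sum --- but since you both ultimately lean on \cite[Theorem 3.14]{FV17}, this is a difference of emphasis rather than of method.

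There is, however, a concrete error in your computation of $C_n$. Setting $x=0$ kills $P_{\lambda/\mu}(x;0,t)$ unless $\lambda=\mu$ (in which case it equals $1$), so the surviving terms are \emph{all} partitions $\lambda=\mu$ with $\lambda_1\leq 2n$, not only the rectangles $(2n)^k$. The resulting sum
\begin{equation*}
\sum_{\lambda:\,\lambda_1\leq 2n}\frac{q^{|\lambda|/2}\,h_\lambda(a,b;t)\,h_\lambda(c/\sqrt{q},d/\sqrt{q};t)}{h_{m_{2n}(\lambda)}(ab;t)\prod_{i}(t;t)_{m_i(\lambda)}}
\end{equation*}
factorizes over the multiplicities $m_1(\lambda),\dots,m_{2n}(\lambda)$ into $2n$ separate Mehler sums (with the $j=2n$ factor modified by the division by $h_{m_{2n}}(ab;t)$), and it is precisely this product over all $2n$ columns that produces the ranges $i=1,\dots,n$ and $i=0,\dots,n-1$ appearing in $C_n$. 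If only $\lambda=(2n)^k$ contributed you would obtain a single Mehler factor and the wrong constant; note also that your sentence is internally inconsistent, since you then say the contribution is summed ``exactly as in the computation of $\Lambda(0;q,t;a,b,c,d)$,'' which is the all-columns factorized computation. Separately, no ``rectangular evaluation formula'' is needed on the left: the constant term of $\bigl(\prod_i x_i^n\bigr)K_{n^N}$ is the coefficient of $\prod_i x_i^{-n}$ in $K_{n^N}$, which equals $1$ by the monic normalization and $BC$-symmetry.
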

\begin{proof}
This proof is essentially the one found in \cite{FV17}. We can express the right hand side (divided by $\prod_i x_i^n$) in terms of the boson model like in the proof of Theorem \ref{thm: HL 6vm}. 

The key tool is the $q$-KZ equations, which characterize the non-symmetric Koornwinder polynomials up to scaling, and in our setting requires that the action of the $R$ matrix \eqref{fund-vert} and $K$ matrices given by \eqref{Stochastic boundary K-weight table} and \eqref{Stochastic boundary dual K-weight table} correspond to the action of the affine Hecke algebra generated by $s_0,\dotsc, s_N$ where $s_i$ acts by swapping $x_i$ and $x_{i+1}$ for $i=1,\dotsc, N-1$, and $s_0$ and $s_N$ correspond to $x_1\mapsto qx_1^{-1}$ and $x_N\mapsto x_N^{-1}$. This can be verified using (a variant of) Proposition \ref{prop: YB boson} and Proposition \ref{prop: two vertex compat}. In particular, the $K$ matrices change the red and gray boson vertices, which essentially amounts to inverting the row parameters. For the $\mu$ boundary, we must first shift the $q$ weights to the bottom, so we are inverting $\sqrt{q}^{-1}x_1$ instead of $x_1$, which accounts for the extra factor of $q$. 

From there, we must check that the right hand side is invariant under the action of the BC Weyl group on the variables. It's clearly symmetric under permutations, and since the action of the $K$ matrices on the right has no effect (since we're summing over all configurations), it's symmetric under inversion.

Finally, we must compute the constant. The constant term on the left hand side equals $C_n$. We can compute the constant term on the right hand side by substituting $0$ for the $x_i$. We can then compute
\begin{equation*}
    \sum_{\lambda_1\leq 2n}\frac{q^{|\lambda|/2}h_\lambda(a,b;t)h_\lambda(c/\sqrt{q},d/\sqrt{q};t)}{h_{m_{2n}(\lambda)(ab;t)}\prod_{i}(t;t)_{m_i(\lambda)}}
\end{equation*}
as in the proof of Theorem \ref{thm:pf}.

We look at factors corresponding to $m_j(\lambda)$. The even factors evaluate to
\begin{equation*}
    \frac{(abcdq^{j-1};t)_\infty}{(q^{j/2};t)_\infty(q^{j/2}ab;t)_\infty(q^{j/2-1}cd;t)_\infty(q^{j/2-1}abcd;t)_\infty},
\end{equation*}
except the $j=2n$ factor where $ab=0$. The odd factors evaluate to
\begin{equation*}
    \frac{(abcdq^{j-1};t)_\infty}{(q^{\frac{j-1}{2}}bd;t)_\infty(q^{\frac{j-1}{2}}ad;t)_\infty(q^{\frac{j-1}{2}}ac;t)_\infty(q^{\frac{j-1}{2}}ac;t)_\infty}.
\end{equation*}
The product over $1\leq j\leq 2n$ gives the desired formula for $C_n$.

\end{proof}

\section*{Acknowledgements}
The authors would like to thank Amol Aggarwal, Guillaume Barraquand, and Alexei Borodin for helpful discussions. JH was partially supported by NSF Award DMS-2451487, as well as by the Swedish Research Council under grant no. 2021-06594 while the author was in residence at Institut Mittag-Leffler in Djursholm, Sweden during Fall 2024. MW was partially supported by the ARC Future Fellowship FT200100981. An extended abstract containing part of this work appeared in the proceedings of FPSAC 2025, see \cite{HW25}.

\bibliography{bibliography}{}
\bibliographystyle{alphaurl}

\end{document}